\newtheorem{thm}{\bf Theorem}
\newtheorem{lem}{\bf Lemma}[section]
\newtheorem{rem}{ Remark}[section]
\newtheorem{prop}{\bf Proposition}
\newtheorem{assump}{\bf Assumption}
\newtheorem{definit}{\bf Definition}
\title{
Blind Ptychographic  Phase Retrieval via Convergent Alternating Direction Method of Multipliers
}
\author{Huibin Chang\thanks{Corresponding author. School of Mathematical Sciences, Tianjin Normal University, Tianjin, 300387,  China, {\tt E-mail:changhuibin@gmail.com}. The author is currently a visiting researcher of the Computational Research Division at Lawrence Berkeley National Laboratory.}
\and{Pablo Enfedaque}\thanks{Computational Research Division, Lawrence Berkeley National Laboratory, Berkeley, CA 94720, USA,
    {\tt Email:penfedaque@lbl.gov}},
    \and {Stefano Marchesini}\thanks{Corresponding author. Computational Research Division, Lawrence Berkeley National Laboratory, Berkeley, CA 94720, USA,
    {\tt Email:smarchesini@lbl.gov}}    
    }
\begin{document}
\maketitle
\slugger{siims}{xxxx}{xx}{x}{x--x}

\begin{abstract}
Ptychography  has  risen  as  a  reference X-ray imaging  technique: it achieves resolutions of one billionth of a meter, macroscopic field of view, or the capability to retrieve chemical or magnetic contrast, among other features. A ptychographyic reconstruction is normally formulated as a blind phase retrieval problem, where  both the image (sample) and the probe (illumination) have to be recovered from phaseless measured data.
In this article we address a nonlinear least squares model for the blind ptychography problem with constraints on the image and the probe by maximum likelihood estimation of the Poisson noise model.
We formulate a variant model that incorporates the information of phaseless measurements of the probe to eliminate possible artifacts.
Next, we propose a generalized alternating direction method of multipliers designed for the proposed nonconvex models with convergence guarantee under mild conditions, where  their subproblems can be solved by fast element-wise operations. Numerically, the proposed algorithm outperforms state-of-the-art algorithms in both speed and image quality.
\end{abstract}

\begin{keywords}
 Blind Ptychography; Phase Retrieval;  Alternating Direction Method of Multipliers
\end{keywords}

\begin{AMS}
46N10,~49N30,~49N45,~65F22,~65N21
\end{AMS}

\pagestyle{myheadings}
\thispagestyle{plain}
\markboth{Blind ptychographic phase retrieval via ADMM }{H. Chang, P. Enfedaque and S. Marchesini}

\section{Introduction}\label{intro}
 Ptychographic phase retrieval (Ptycho-PR) \cite{nellist1995resolution,rodenburg2004phase,maiden2009improved}
  is an increasingly popular imaging technique used in scientific fields as diverse as condensed matter physics, cell biology or materials science, among others.
\begin{figure}[hb!]
\begin{center}
\includegraphics[width=.5\textwidth]{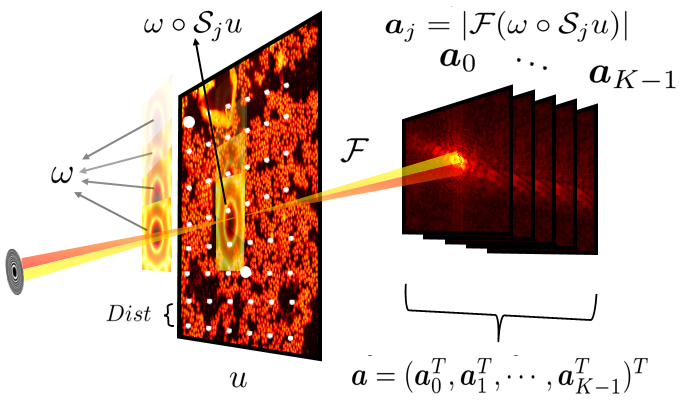}
\end{center}
\caption{Ptychographic phase retrieval (Far-field): A stack of  amplitudes $\bm a_j=|\mathcal F(\omega\circ\mathcal S_j u)|$ is collected, with
 $\omega$ being the localized coherent probe, and $u$ being the image of interest (sample).
 The white dots represent the scanning lattice points, with $D$ denoting the sliding distance between centers of two adjacent frames.}
\label{fig1}
\end{figure}
In a ptychography experiment (Figure \ref{fig1}),  a localized coherent X-ray probe (illumination) $\omega$  scans through an image (sample) $u$, while a 2D detector collects a sequence of phaseless intensities  in the far field.
Throughout the paper, we consider Ptycho-PR in a discrete setting as follows:
A 2D image is expressed as $u\in\mathbb C^n$ with $\sqrt{n}\times\sqrt{n}$ pixels, $\omega\in\mathbb C^{\bar m}$ is a localized 2D  probe   with  $\sqrt{\bar m}\times \sqrt{\bar m}$ pixels ($u$ and $\omega$ are both written as  a vector   by a lexicographical order), and ~$\bm a_j\in \mathbb R_+^{\bar m}~\forall 0\leq j\leq J-1$ corresponds to a stack of phaseless measurements, collected with $
\bm a_j=|\mathcal F(\omega\circ \mathcal S_j u)|.
$
Operation $|\cdot|$ represents the element-wise absolute value of a vector,  
 $\circ$ denotes the element-wise multiplication,  $\mathcal S_j\in \mathbb R^{\bar m\times n}$ is a binary  matrix that defines a small window  with the index $j$ and  size  $\bar m$ over the entire image $u$, and $\mathcal F$ denotes the normalized discrete Fourier transformation.
In practice, as the probe is almost never completely known, one has to solve a blind ptychographic phase retrieval (BP-PR) problem \cite{hesse2015proximal}:  
\begin{equation}\label{PtychoPR}
{\text{BP-PR:\qquad To find ~}\omega \in \mathbb C^{\bar m}\text{~and~} u\in \mathbb C^n,~~} s.t.~~|\mathcal A(\omega,u)|= \bm a,
\end{equation}
where  bilinear operators  $\mathcal A:\mathbb C^{\bar m}\times \mathbb C^{n}\rightarrow \mathbb C^{m}$ and
 $\mathcal A_j:\mathbb C^{\bar m}\times \mathbb C^{n}\rightarrow \mathbb C^{\bar m}~\forall 0\leq j\leq J-1$,  are denoted as follows:
  $
 \mathcal A(\omega,u):=(\mathcal A_0^T (\omega,u), \mathcal A_1^T(\omega,u),\cdots, \mathcal A_{J-1}^T(\omega,u))^T,$
 $\mathcal A_j(\omega,u):=\mathcal F(\omega\circ \mathcal S_j u),$
 and $\bm a:=(\bm a^T_0, \bm a^T_1, \cdots, \bm a^T_{J-1})^T\in \mathbb R^m_+.$

\subsection{State-of-the-art algorithms}\label{sec1-1}
When the probe  is exactly known, Ptycho-PR corresponds to a general phase retrieval (PR) problem.
Multiple efficient algorithms have been designed during the last years to solve both general phase retrieval~\cite{marchesini2007invited,Shechtman2014}, and non-blind (no probe retrieval) Ptycho-PR problems~\cite{rodenburg2004phase,guizar2008phase,wen2012,marchesini2013augmented,qian2014efficient,tian2014multiplexed,marchesini2015alternating,chen2018coded,xu2018accelerated}. { An early work by Chapman \cite{chapman1996phase} to solve the blind problem used the  Wigner-Distribution deconvolution method to retrieve the probe (zone-plate pupil).}  The more recent algorithms  that focus on the blind Ptycho-PR problem~\eqref{PtychoPR} are described below.

\subsubsection{Projection Algorithms}
In the optics community,   projection  algorithms such as Alternating Projections (AP) are very popular for non-blind PR problems~\cite{marchesini2007invited}.
Some projection algorithms have also been applied to  BP-PR problems, \emph{e.g.} Douglas-Rachford (DR) algorithm  by Thibault \emph{et al.} \cite{thibault2009probe}, extended ptychographic engine (ePIE) by Maiden and Rodenburg  \cite{maiden2009improved}, and relaxed averaged alternating reflections \cite{Luke2005} based projection algorithm by Marchesini \emph{et al.}  \cite{marchesini2016sharp}.
Projection algorithms for PR involve a projection onto a nonconvex modulus constraint set, and therefore, their convergence studies are very challenging.
Although some progress has been made for the general PR problem using projection algorithms~ \cite{hesse2013nonconvex,marchesini2015alternating,chen2016fourier},  the corresponding convergence for the blind problem is still unclear.
Arguably, the most popular projection algorithms for BP-PR are ePIE and DR, described in detail below.

\textbf{i) DR.} Given an exit wave $\Psi:=(\Psi^T_0,\Psi^T_1,\cdots,\Psi_{J-1}^T)^T\in \mathbb C^{m},$ with $\Psi_j:=\omega\circ\mathcal S_j u~\forall 0\leq j\leq J-1,$ one needs to find the exit wave $\Psi^\star$ that belongs to the intersection of two sets, \emph{i.e.}
$\Psi^\star\in\widehat{\mathscr X}_1\bigcap \widehat{\mathscr X}_2,$
with
\begin{equation}\label{eqX}
\widehat{\mathscr X}_1:=\{\Psi:=(\Psi^T_0,\Psi^T_1,\cdots,\Psi_{J-1}^T)^T\in \mathbb C^m:~|\mathcal F\Psi_j|=\bm a_j~\forall 0\leq j\leq J-1\},
\end{equation}
\hspace{12mm}
$
\widehat{\mathscr X}_2:=\{\Psi\in \mathbb C^m:~\exists \omega\in \mathbb C^{\bar m}, u\in \mathbb C^n, s.t. ~~\omega\circ \mathcal S_j u=\Psi_j~\forall 0\leq  j\leq J-1 \}.$
\vspace{2mm}

The AP algorithm that determines this intersection needs to calculate the projections onto $\widehat{\mathscr X}_1$ and $\widehat{\mathscr X}_2$.
Regarding the projection onto $\widehat{\mathscr X}_1$ as
$
\widehat {\mathcal P}_1(\Psi):=\arg\min_{\widehat\Psi\in \widehat {\mathscr X}_1}\|\widehat \Psi-\Psi\|^2,
$
with $\|\cdot\|$ denoting the $L^2$ norm in complex Euclidean space, one readily gets  a closed form solution
$\widehat {\mathcal P}_1(\Psi):=\big((\widehat {\mathcal P}_1^0(\Psi))^T,\cdots, (\widehat {\mathcal P}_1^{J-1}(\Psi))^T\big)^T,
$
with
$
\widehat {\mathcal P}_1^j(\Psi)=\mathcal F^{-1}(\bm a_j\circ \mathrm{sign}(\mathcal F\Psi_j))~0\leq j\leq J-1,
$
where $\forall~z\in \mathbb C^m,$  $(\mathrm{sign}(z))(t):=\mathrm{sign}(z(t))~\forall 0\leq t\leq m-1$.\footnote {$\mathrm{sign}(x)$ for a scalar $x\in \mathbb C$ is denoted as  $\mathrm{sign}(x)=\tfrac{x}{|x|}$ if $x\not=0;$ otherwise $\mathrm{sign}(0):=c$ with an arbitrary constant $c\in \mathbb C$ with unity length.}
For the projection onto $\widehat{\mathscr X}_2$, given $\Psi^k$ as the solution
in the $k^{\text{th}}$ iteration, one gets
 $\widehat {\mathcal P}_2(\Psi^{k}):=((\omega^{k+1}\circ \mathcal S_0 u^{k+1})^T, (\omega^{k+1}\circ \mathcal S_1 u^{k+1})^T,\cdots, (\omega^{k+1}\circ \mathcal S_{J-1} u^{k+1})^T)^T,$
where
\begin{equation}\label{modelLS}
(\omega^{k+1}, u^{k+1})=\arg\min\limits_{\omega, u} F(\omega,u,\Psi^k):=\tfrac12  {\textstyle \sum\nolimits_j}\|\Psi^k_j-\omega\circ \mathcal S_j u\|^2,
\end{equation}
with $\sum\nolimits_j$ being a simplified form of $\sum\nolimits_{j=0}^{J-1}.$
When solving \eqref{modelLS} inexactly by alternating minimization (after $T$ steps) as
\begin{equation}
\omega_{l+1}=\arg\min_{\omega} F(\omega, u_l,\Psi^k),  u_{l+1}=\arg\min_u F(\omega_{l+1},u,\Psi^k) ~\forall l=0,1,\cdots,T-1,
\label{eqAPmu}
\end{equation}
the DR algorithm for BP-PR can be formulated in two steps  \cite{thibault2009probe}, as follows:
\vspace{2mm}

\hspace{1mm}(1) Compute $\widehat\Psi^k$ by $\widehat\Psi^k_j=\omega^{k+1}\circ\mathcal S_j u^{k+1}$, where $(\omega^{k+1},u^{k+1})$ is solved  by \eqref{eqAPmu}.

\hspace{1mm}(2) Compute $\Psi^{k+1}$ by
\vspace{3mm}
$
\Psi^{k+1}=\Psi^k+\widehat {\mathcal P}_1(2\widehat\Psi^k-\Psi^k)-\widehat\Psi^k
$.

\textbf{ii) ePIE.}
This iterative algorithm can be expressed as an AP method for BP-PR as follows: To find $\Psi^\star_{n_k}$ belonging to the intersection as
\[
\Psi^\star_{n_k}\in\{|\mathcal F\Psi_{n_k}|=\bm a_{n_k}\}\cap\{\Psi_{n_k}:~\exists \omega\in\mathbb C^{\bar m}, u\in \mathbb C^n, ~~s.t. ~~\omega\circ\mathcal S_{n_k} u=\Psi_{n_k}\},
\]
with  a random frame index $n_k$.
 By first computing the projection $\psi^k_{n_k}$  by $\widehat {\mathcal P}_1^{n_k}(\psi^k_{n_k})$, and then updating $\omega^{k+1}$ and $u^{k+1}$ by the gradient descent algorithm (inexact projection),
the ePIE algorithm \cite{maiden2009improved} can be expressed by
updating $\omega^{k+1}$ and $u^{k+1}$  in parallel as
\[
\left\{
\begin{split}
&\omega^{k+1}=\omega^k-\tfrac{d_2}{\|\mathcal  S_{n_k} (u^{k})^*\|^2_{\infty}}\mathcal S_{n_k} (u^{k})^*\circ(\Psi^{k}_{n_k}-\widehat {\mathcal P}_1^{n_k} (\Psi^{k}_{n_k}))\\
&u^{k+1}=u^k-\tfrac{d_1}{{\|\omega^k\|^2_{\infty}}} \mathcal S_{n_k}^T\left({(\omega^k)^*}\circ(\Psi^{k}_{n_k}-\widehat {\mathcal P}_1^{n_k} (\Psi^{k}_{n_k}))\right),
\end{split}
\right.
\]
with frame index $n_k\in \{0,1,\cdots,J-1\}$   generated randomly, and positive parameters $d_1$ and $d_2$,
 where $\omega^k, u^k$ are the solutions in the $k^{\text{th}}$ iteration, $(\cdot)^*$  denotes the complex conjugate of a vector pointwise, and $\|\omega\|_\infty:=\max_t |\omega(t)|$.

\subsubsection{Proximal Alternating Linearized Minimization}\label{sec1-1-2}
The proximal alternating linearized minimization (PALM) method \cite{bolte2014proximal} was applied to BP-PR in \cite{hesse2015proximal}, which solved  a least squares problem with a nonconvex constraint set  as follows:
 \begin{equation}\label{eqLS}
 \min\limits_{\omega,u,\Psi} F(\omega,u,\Psi)+\mathbb I_{\widehat{\mathscr X}_1}(\Psi),
 \end{equation}
 with $F(\omega, u, \Psi)$ and $\widehat{\mathscr X}_1$  denoted in \eqref{modelLS} and \eqref{eqX}, respectively,
 and the indicator function $\mathbb I_{\widehat{\mathscr X}_1}$  denoted as:
 $ \mathbb I_{\widehat{\mathscr X}_1} (\Psi)\!=\!0,~\text{if~}\Psi\in \widehat{\mathscr X}_1, \text{otherwise~} \mathbb I_{\widehat{\mathscr X}_1} (\Psi)\!=\!+\infty.
 $
PALM [See \cite{hesse2015proximal}, Algorithm 2.1] can be expressed in two steps:
\vspace{1.5mm}

\hspace{1mm}(1) Compute $\omega^{k+1}, u^{k+1}$ sequentially by alternating  minimizing $F(\omega,u,\Psi^k)$ by the gradient descent scheme as ePIE.

\vspace{1.5mm}

\hspace{1mm}(2) Compute $\Psi^{k+1}$ by
$
\Psi^{k+1}=\widehat {\mathcal P}_1\big(\tfrac{1}{{1+\gamma}}({\widehat\Psi^{k+1}+\gamma \Psi^k})\big),
$
with $
\widehat\Psi^{k+1}_j:=\omega^{k+1}\circ S_j u^{k+1},
$
and the positive parameter $\gamma$.

\vspace{1.5mm}

%
%

Under  the assumption of  boundedness of iterative sequences, the convergence of PALM to  stationary points of \eqref{eqLS} was proved~\cite{hesse2015proximal}.  The parallel version of PALM  was also provided, presenting multiple similarities with ePIE. The main difference between them  is that for ePIE, only the gradient of $F$ \emph{w.r.t.} a randomly selected single frame is adopted to update $\omega$ and $u$ per outer loop, while for PALM, each block of $\omega$ and $u$ can be updated in parallel by employing the gradient \emph{w.r.t.} all adjacent frames. Therefore, the parallel version of PALM is numerically more robust than ePIE.

\subsection{Motivations and Contributions}

Existing algorithms for BP-PR still have several limitations.
The convergence speed of PALM \cite{hesse2015proximal} is not satisfactory (See the convergence  curves in cyan in the first row of Figure \ref{fig4}).  AP algorithms for BP-PR as DR \cite{thibault2009probe} and ePIE \cite{maiden2009improved} tend to be  unstable: visible drifts of the probe  happen during iterations of DR (See Figure \ref{fig5} (a)),  and the iterative sequence of ePIE  could diverge if the measurements are few or noisy. In addition, the existing algorithms reviewed in section \ref{sec1-1} for  BP-PR were derived based on a least squares model that ignored  the impact of experimental noise, \emph{e.g.} Poisson noise.

The alternating direction method of multipliers (ADMM) \cite{Esser2010,Wu&Tai2010,boyd2011distributed}
 was adopted to solve non-blind PR problems in \cite{wen2012,chang2016Total}, where it demonstrated fast convergence and robustness. However, it has  never been applied to BP-PR problems \eqref{PtychoPR}. On the theoretical side, only the subsequence convergence  of ADMM  was proved for non-blind PR problems in \cite{wen2012}, which relied on a relatively strong assumption of the convergence for the successive error of the multiplier.


In this paper we propose a more efficient method that employs {the generalized form of ADMM \cite{eckstein1994some,zhang2011unified,fazel2013hankel,deng2016global}, a typical extension of standard ADMM\footnote{Setting the preconditioning matrices to zeros in the generalized ADMM yields  standard ADMM.}}.
The main contributions of this paper are summarized below:
\begin{itemize}
\item
We propose a nonlinear least squares model  for  BP-PR  with
constraints on the image and probe, driven by the maximum likelihood estimation  of the Poisson noise.
We also formulate a  variant optimization model  by incorporating the Fourier magnitudes  of the probe to  attenuate artifacts caused by  the periodical lattices based scanning. 
\item
We design fast generalized ADMMs to solve the proposed models, whose subproblems can be efficiently solved with comparatively few elementwise operations.  With  suitable stepsizes, we prove  their convergence  to stationary points for the proposed models under  mild conditions.
\item
We conduct numerous experiments  to show the  convergence and efficiency of the proposed algorithms. Compared with the state of the art, the proposed algorithms present a significantly better convergence speed: their R-factor decreases faster, while producing higher quality images.  When using additional information from the probe, the proposed algorithms  also successfully   attenuate artifacts from the periodical lattice based scanning. We report more than $10\times$ acceleration by using a non-optimized GPU implementation in \textsc{Matlab}, which demonstrates how the method can be trivially parallelized, showing great potential for large-scale problems.

\end{itemize}
The reminder of the paper is organized as follows: Section \ref{sec2} gives the  nonlinear least squares models for BP-PR, as well as  the Lipschitz property. 
Fast generalized ADMMs are given in section \ref{sec3} with convergence guarantees  shown in section \ref{sec4}. Numerous experiments are conducted to verify the effectiveness of proposed algorithms in section \ref{sec5}. Section~\ref{sec6} summarizes this work.\!\!
\section{Proposed  Models}\label{sec2}

Instead of directly solving the quadratic multidimensional systems in \eqref{PtychoPR}, following \cite{qian2014efficient} for non-blind Ptycho-PR,
a nonlinear least squares model for BP-PR  can be given as
 \[
 \min\nolimits_{\omega\in \mathbb C^{\bar m},u\in\mathbb C^n}\tfrac12\||\mathcal A(\omega,u)|-\bm a\|^2.
 \]
In order to deal with data contaminated by different types of noise,  based on the maximum likelihood estimation (MLE),   more general mappings \cite{chang2018variational}  to measure the distance between the recovered intensity $g\in\mathbb R_+^m$  and the collected noisy intensity $f\in \mathbb R_+^m$, have been extensively studied for the PR problem:
\begin{equation}\label{DF}
\left\{
\begin{split}
& \tfrac{1}{2}\| \sqrt{g}\!-\!\sqrt{f} \|^2, \quad\qquad\quad\mbox{~Amplitude  Gaussian Metric (AGM)~ \cite{qian2014efficient,wen2012}}\\
&\tfrac12\langle g\!-\!f\circ \log(g), \bm 1_{m}\rangle, \phantom{..}\quad\mbox{Intensity  Poisson Metric (IPM) \cite{thibault2012maximum,chang2016Total}}\\
& \tfrac{1}{2}\big\|g\!-\!f\big\|^2, \phantom{.}\qquad\quad\qquad\enspace\mbox{~Intensity based Gaussian Metric (IGM)~ \cite{qian2014efficient}}\\
&\!\!\tfrac{1}{2}\|\tfrac{g}{\sqrt{f+\varepsilon \mathbf 1_{m}}}\!-\!\tfrac{f}{\sqrt{f+\varepsilon \mathbf 1_{m}}}\|^2, \phantom{-}\mbox{Weighted Intensity  Gaussian Metric (wIGM)~  \cite{qian2014efficient},}
\end{split}
\right.
\end{equation}
where
$\mathbf 1_{m}\in\mathbb R^{m}$ represents a vector  whose elements are all  ones,
 $\sqrt{\cdot}$ and $\tfrac{~\cdot~}{~\cdot~}$ denote  element-wise square root and division of a vector, respectively, $\varepsilon>0$ is the penalization  factor,  and $\langle\cdot, \cdot\rangle$ denotes the $L^2$ inner product in Euclidean space.
IPM is derived from the MLE for Poisson noise, and AGM can be interpreted as a metric based on the variance-stabilizing transform 
 for Poisson noise \cite{marchesini2016sharp}.

 {In this paper, we focus on  Poisson noisy measurements, which are typical  during photon-counting procedures in real experiments.} However, the resulted objective functionals for BP-PR directly based on AGM and IPM in \eqref{DF} are not  Lipschitz differentiable, such that it is difficult to design  first order splitting algorithms with  convergence guarantee. Therefore,  we consider the following two modified metrics with the penalization parameter $\varepsilon>0$ 
 as follows:
\begin{equation}\label{DF-P}
\!\mathcal B(g,f)\!=\!
\left\{
\begin{split}
& \tfrac{1}{2}\| \sqrt{g+\varepsilon \mathbf 1_{m}}\!-\!\sqrt{f+\varepsilon \mathbf 1_{m}} \|^2, \qquad\qquad\qquad\phantom{,.}\mbox{~Penalized-AGM (pAGM)}\\
&\tfrac12\langle g+\varepsilon \mathbf 1_{m}\!-\!(f+\varepsilon \mathbf 1_{m})\circ \log(g+\varepsilon \mathbf 1_{m}), \bm 1_{m}\rangle, \phantom{..}\mbox{Penalized-IPM (pIPM) }\\
\end{split}
\right.
\end{equation}
with  $\mathcal B(\cdot, \cdot): \mathbb R^{m}_+\times \mathbb R^{m}_+\rightarrow \mathbb R_+$. 
Consequently, a nonlinear  optimization model  for BP-PR  is given as:
\begin{equation}\label{eqModel}
\text{Model I:\qquad} \min\nolimits_{\omega\in \mathbb C^{\bar m},u\in\mathbb C^n}\mathcal G(\mathcal A(\omega,u)){+\mathbb I_{\mathscr X_1}(\omega)+\mathbb I_{\mathscr X_2}(u),}
\end{equation}
with $\mathcal G(z):=\mathcal B(|z|^2,f)$, where
 the amplitude constraints of the probe and  image as  \cite{hesse2015proximal,marchesini2016sharp} are incorporated, {where $\mathscr X_1:=\{\omega\in\mathbb C^{\bar m}:~\|\omega\|_\infty\leq C_{\omega}\}$ and $\mathscr X_2:=\{u\in \mathbb C^n:~\|u\|_\infty\leq C_{u}\}$ with two positive constants $C_\omega, C_u$.}

If the collected data is noise free, the minimizer to \eqref{eqModel} exists if the solution set $\{(\omega, u)\in \mathscr X_1\times \mathscr X_2: |\mathcal A(\omega, u)|=\bm a\}\not=\emptyset$. For the data possibly contaminated by noise, the following proposition shows the existence of the minimizer to Model I.
\begin{prop}\label{thm0}
Model I  admits at least one minimizer. 
\end{prop}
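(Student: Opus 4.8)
The plan is to use the direct method in the calculus of variations. First I would observe that the feasible set $\mathscr X_1 \times \mathscr X_2$ is compact: it is a product of two closed balls (in the $\ell^\infty$ norm) in finite-dimensional complex vector spaces $\mathbb C^{\bar m}$ and $\mathbb C^n$, hence bounded and closed, hence compact by Heine--Borel. The objective $\mathcal G(\mathcal A(\omega,u))$ restricted to this set is therefore an honest function on a compact set (the indicator terms vanish there and are $+\infty$ off it), so it suffices to show this restricted function is lower semicontinuous — in fact I expect it to be continuous.

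Next I would unwind the composition. The bilinear operator $\mathcal A(\omega,u)$ is a polynomial (degree one in each argument) in the entries of $\omega$ and $u$, composed with the linear Fourier transform and the linear windowing matrices $\mathcal S_j$, so $\mathcal A$ is continuous. The map $z \mapsto |z|^2$ is continuous, and then $\mathcal B(\cdot, f)$ is continuous on $\mathbb R_+^m$: for pIPM this is clear since $g \mapsto g + \varepsilon\mathbf 1_m - (f+\varepsilon\mathbf 1_m)\circ\log(g+\varepsilon\mathbf 1_m)$ is continuous on $\{g \geq 0\}$ precisely because the $\varepsilon$-shift keeps the argument of $\log$ bounded away from zero; for pAGM, $g \mapsto \tfrac12\|\sqrt{g+\varepsilon\mathbf 1_m} - \sqrt{f+\varepsilon\mathbf 1_m}\|^2$ is a composition of continuous maps. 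Hence $\mathcal G \circ \mathcal A$ is continuous on all of $\mathbb C^{\bar m}\times\mathbb C^n$, and in particular on the compact set $\mathscr X_1\times\mathscr X_2$.

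Finally I would invoke the Weierstrass extreme value theorem: a continuous real-valued function on a nonempty compact set attains its infimum. Since $\mathscr X_1\times\mathscr X_2$ is nonempty (it contains $(0,0)$), the restricted objective attains a minimum at some $(\omega^\star,u^\star)\in\mathscr X_1\times\mathscr X_2$, and this point is a global minimizer of Model I because the value of the full objective (with the indicator terms) is $+\infty$ everywhere outside $\mathscr X_1\times\mathscr X_2$. This completes the argument.

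The proof is essentially routine, and the only place warranting any care is the continuity of $\mathcal B(\cdot,f)$ — specifically, verifying that the $\varepsilon$-penalization in \eqref{DF-P} is exactly what removes the singularity of $\log$ at the origin (which is why the unpenalized IPM metric of \eqref{DF} was problematic in the first place). Everything else — compactness of $\ell^\infty$ balls in finite dimensions, continuity of the bilinear map $\mathcal A$, and the Weierstrass theorem — is standard, so I would keep the written proof short and state these facts without belaboring them.
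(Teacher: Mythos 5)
Your argument is correct and is precisely the standard compactness-plus-continuity argument that the paper alludes to when it states the existence proof is "standard" and omits the details: the $\ell^\infty$ balls $\mathscr X_1\times\mathscr X_2$ are compact in finite dimensions, the $\varepsilon$-penalization makes $\mathcal G\circ\mathcal A$ continuous there, and Weierstrass gives a minimizer. Nothing to add.
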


It is standard to show the existence of a minimizer to Model I, and we omit the details.

The generalized derivative for complex-valued variables are adopted as in \cite{hesse2015proximal,chang2016Total} by separating the real and imaginary parts of the variables and operators defined in the complex space.
Below we denote the  Lipschitz property of the function $\mathcal G$ defined on $\mathbb C^m$.
\begin{definit}[Lipschitz Property]
\label{def1}
The function $\Phi:\mathbb C^m\rightarrow\mathbb R$  has the Lipschitz property in $\mathbb C^m$ with a    Lipschitz constant $L$ if
\begin{itemize}
\item[1)] It is Lipschitz differentiable (its gradient is Lipschitz continuous), \emph{i.e.}
\begin{equation}\label{eqLC}
\|\nabla\Phi(v_1)-\nabla\Phi(v_2)\|\leq L \|v_1-v_2\|~\forall v_1, v_2\in \mathbb C^m;
\end{equation}
\item[2)] It has the descent property as
\begin{equation}\label{eqLipDescent}
\Phi(v_2)-\Phi(v_1)-\Re(\langle\nabla \Phi( v_1),  v_2-v_1\rangle)\leq \tfrac{L}{2}\|v_2-v_1\|^2~\forall v_1, v_2\in  \mathbb C^m.
\end{equation}
\end{itemize}
\end{definit}
Following \cite{attouch2010proximal}, letting  $\Phi$ be smooth in $C^1$, the second relation \eqref{eqLipDescent}  holds if  it  is Lipschitz differentiable as \eqref{eqLC}.
{It is well known that  the Lipschitz property of the objective functional is of high importance  to prove the convergence of first-order splitting algorithms for a nonconvex minimization problem \cite{xu2013block,bolte2014proximal,wang2015global,hong2016convergence}.} We  show the Lipschitz property of $\mathcal G$ in the following lemma.
\begin{lem}
\label{assump1}
 The function $\mathcal G(\cdot)$ has the Lipschitz property.
\end{lem}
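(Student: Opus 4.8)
The plan is to verify the two conditions of Definition~\ref{def1} for $\mathcal G(z) = \mathcal B(|z|^2, f)$, treating the two metric choices in \eqref{DF-P} separately but in parallel. The key structural observation is that $\mathcal G$ is a composition $z \mapsto |z|^2 \mapsto \mathcal B(\cdot, f)$, and the penalization parameter $\varepsilon > 0$ is precisely what makes each outer scalar function smooth with bounded derivatives near the range of interest. I would first compute $\nabla \mathcal G$ explicitly in terms of the real/imaginary splitting: writing $g = |z|^2$, one gets $\nabla \mathcal G(z) = 2\, \partial_g \mathcal B(g,f) \circ z$ (componentwise, in the real-variable sense), where for pAGM $\partial_g \mathcal B(g,f) = \tfrac12\big(1 - \sqrt{\tfrac{f+\varepsilon}{g+\varepsilon}}\big)$ and for pIPM $\partial_g \mathcal B(g,f) = \tfrac12\big(1 - \tfrac{f+\varepsilon}{g+\varepsilon}\big)$. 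The crucial point is that both of these scalar functions of $g$, as well as their derivatives in $g$, are bounded on $g \ge 0$ uniformly (using only $f \ge 0$ and $\varepsilon > 0$): for pAGM, $\partial_g \mathcal B$ lies in $[\tfrac12(1 - \sqrt{(f_t+\varepsilon)/\varepsilon}),\ \tfrac12)$ and $\partial_{gg}\mathcal B = \tfrac14 \sqrt{f+\varepsilon}\,(g+\varepsilon)^{-3/2}$ is bounded by $\tfrac14\sqrt{f+\varepsilon}\,\varepsilon^{-3/2}$; for pIPM, $\partial_{gg}\mathcal B = \tfrac12 (f+\varepsilon)(g+\varepsilon)^{-2}$ is bounded by $\tfrac12(f+\varepsilon)\varepsilon^{-2}$.

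Next I would establish \eqref{eqLC}. Since $\mathcal G$ is a product of the smooth scalar multiplier $h(z) := 2\,\partial_g \mathcal B(|z|^2, f)$ (bounded, with $|z|$-Lipschitz gradient by the bounds on $\partial_g$ and $\partial_{gg}$ above, plus the chain rule through $g = |z|^2$) and the identity map $z$, the gradient $\nabla \mathcal G(z) = h(z) \circ z$ has Lipschitz constant controlled by $\|h\|_\infty + (\text{Lip const of } h)\cdot\|z\|$. The only issue is that $\|z\|$ appears — i.e., $h(z)\circ z$ is a priori only locally Lipschitz, not globally. This is where the amplitude constraints enter: $\mathscr X_1, \mathscr X_2$ force $\|\omega\|_\infty \le C_\omega$ and $\|u\|_\infty \le C_u$, and since $\mathcal S_j$ is a $0/1$ selection matrix and $\mathcal F$ is unitary, the relevant arguments $z = \mathcal A(\omega,u)$ satisfy $\|z\|_\infty \le C_\omega C_u$ and hence lie in a fixed bounded (in fact compact) region $\mathcal D$. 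So I would state and prove the Lipschitz property of $\mathcal G$ restricted to this compact domain $\mathcal D$ — which is all that the convergence analysis of Section~\ref{sec4} actually needs — and then both \eqref{eqLC} and the descent inequality \eqref{eqLipDescent} follow: the former by the Hessian bound on $\mathcal D$, the latter automatically from \eqref{eqLC} via the mean-value / fundamental-theorem-of-calculus argument of \cite{attouch2010proximal} quoted just above the lemma (applied along the segment, noting $\mathcal D$ can be taken convex, e.g. the ball $\{\|z\|_\infty \le C_\omega C_u\}$).

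The main obstacle — and the only genuinely non-routine point — is handling the unboundedness honestly: $\mathcal G$ on all of $\mathbb C^m$ is genuinely \emph{not} globally Lipschitz differentiable (the factor $h(z)\circ z$ grows), so the lemma statement must be read, and must be used, with the understanding that $\mathcal A(\omega,u)$ ranges over the compact set induced by $\mathscr X_1 \times \mathscr X_2$; I would make this explicit at the start of the proof. Once the domain is pinned down, everything reduces to bounding the scalar derivatives $\partial_g \mathcal B$ and $\partial_{gg}\mathcal B$ on $g \in [0, (C_\omega C_u)^2]$ (a two-line computation for each of pAGM and pIPM) and assembling them through the chain and product rules for the real–imaginary representation of $\mathcal G$. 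The resulting Lipschitz constant $L$ can be written explicitly in terms of $\varepsilon$, $\|f\|_\infty$, $C_\omega$, and $C_u$; I would record it since later estimates on admissible stepsizes depend on it, but I would not grind through the optimization of the constant.
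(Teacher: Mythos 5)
Your central structural claim is wrong, and it drives the whole proposal off course. You assert that $\mathcal G$ is ``genuinely not globally Lipschitz differentiable'' on $\mathbb C^m$ because the factor $h(z)\circ z$ grows, and you therefore restrict to the compact set $\{\|z\|_\infty\le C_\omega C_u\}$. In fact $\mathcal G$ \emph{is} globally Lipschitz differentiable, and this is exactly what the penalization by $\varepsilon$ buys. The loss occurs in your product-rule bound $\|h\|_\infty+\mathrm{Lip}(h)\cdot\|z\|$: it ignores the cancellation between the growth of $|z|$ and the decay of $\partial_{gg}\mathcal B$. Concretely, for pAGM the non-identity part of the gradient is $\tfrac{\sqrt{f+\varepsilon\bm 1_m}}{\sqrt{|z|^2+\varepsilon\bm 1_m}}\circ z$, whose $t$-th component has modulus $\sqrt{f(t)+\varepsilon}\,\tfrac{|z(t)|}{\sqrt{|z(t)|^2+\varepsilon}}\le\sqrt{f(t)+\varepsilon}$ — bounded, not growing — and a direct estimate (bounding $\tfrac{|v_1|(|v_1|+|v_2|)}{\sqrt{|v_1|^2+\varepsilon}\sqrt{|v_2|^2+\varepsilon}(\sqrt{|v_1|^2+\varepsilon}+\sqrt{|v_2|^2+\varepsilon})}$ by $\varepsilon^{-1/2}$, using $|v_i|\le\sqrt{|v_i|^2+\varepsilon}$ termwise) yields the global constant $L=1+\tfrac{2}{\sqrt{\varepsilon}}\|\sqrt{f+\varepsilon\bm 1_m}\|_\infty$; the pIPM case is analogous with $L=1+\tfrac{2}{\varepsilon}\|f+\varepsilon\bm 1_m\|_\infty$. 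This is precisely the paper's argument, and it is a genuinely global statement on $\mathbb C^m$.

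The compactness workaround is not a harmless alternative either. The Lipschitz and descent properties are invoked later at the auxiliary iterates $z^k,z^{k+1}$, which are outputs of the proximal step \eqref{eqZ} and are \emph{not} a priori confined to $\{\|z\|_\infty\le C_\omega C_u\}$; their boundedness (Lemma~\ref{lem2}) is itself derived \emph{using} the descent inequality \eqref{eqLipDescent} applied to $\mathcal G$ at $z^{k+1}$ and $\mathcal A(\omega^{k+1},u^{k+1})$. Restricting the Lipschitz property to the image of $\mathscr X_1\times\mathscr X_2$ under $\mathcal A$ would therefore make the boundedness argument circular. The fix is simply to drop the restriction and carry out the componentwise estimate above; your computations of $\partial_g\mathcal B$ and $\partial_{gg}\mathcal B$ are otherwise correct and would feed directly into that estimate.
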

\begin{proof}
The function $\mathcal G(z)$ with each metric in \eqref{DF-P} is $C^\infty$ smooth, and we just need to show the Lipschitz continuity of its gradient.
  The corresponding first order derivatives are given below:
\begin{equation}
\label{eqGrad-P}
\nabla \mathcal G(z)=
\left\{
\begin{split}
&z-\tfrac{\sqrt{f+\varepsilon\bm 1_m}}{\sqrt{|z|^2+\varepsilon \bm 1_m}}\circ z~\text{for pAGM};\\
&z-\tfrac{f+\varepsilon\bm 1_m}{|z|^2+\varepsilon \bm 1_m} \circ z~\text{for pIPM}.
\end{split}
\right.
\end{equation}
A basic estimate is given below:
\begin{equation}\label{eq3}
\|v_1\circ v_2\|\leq \|v_1\|_\infty \|v_2\|,~\forall~v_1, v_2\in \mathbb C^{m}.
\end{equation}
In order to prove the first property in \eqref{eqLC}, $\forall~v_1, v_2\in \mathbb C^m$, by \eqref{eqGrad-P}, we have
\[
\begin{split}
&\|\nabla\mathcal G(v_2)-\nabla\mathcal G(v_1)\|\leq \|v_2-v_1\|+\big\|\tfrac{\sqrt{f+\varepsilon\bm 1_m}}{\sqrt{|v_2|^2+\varepsilon \bm 1_m}}\circ v_2-\tfrac{\sqrt{f+\varepsilon\bm 1_m}}{\sqrt{|v_2|^2+\varepsilon \bm 1_m}}\circ v_1\big\|\\
&\hskip 5cm+\big\|\tfrac{\sqrt{f+\varepsilon\bm 1_m}}{\sqrt{|v_2|^2+\varepsilon \bm 1_m}}\circ v_1-\tfrac{\sqrt{f+\varepsilon\bm 1_m}}{\sqrt{|v_1|^2+\varepsilon \bm 1_m}}\circ v_1\big\|\\
&\stackrel{\eqref{eq3}}{\leq} (1+\big\|\tfrac{\sqrt{f+\varepsilon\bm 1_m}}{\sqrt{|v_2|^2+\varepsilon \bm 1_m}}\big\|_\infty)\|v_2-v_1\|
\\
&\hskip 1cm+
\big\|\sqrt{f+\varepsilon \bm 1_m}\circ \tfrac{v_1\circ(|v_1|+|v_2|)}{\sqrt{|v_1|^2+\varepsilon}\sqrt{|v_2|^2+\varepsilon}(\sqrt{|v_1|^2+\varepsilon}+\sqrt{|v_2|^2+\varepsilon})}\big\|_\infty\big\||v_1|-|v_2|\big\|\\
&\leq (1+\tfrac{2}{\sqrt{\varepsilon}}\|{\sqrt{f+\varepsilon\bm 1_m}}\big\|_\infty)\|v_2-v_1\|.
\end{split}
\]
Similarly, in the case of pIPM, we have
\[
\|\nabla\mathcal G(v_2)-\nabla\mathcal G(v_1)\|\leq (1+\tfrac{2}{{\varepsilon}}\|{{f+\varepsilon\bm 1_m}}\big\|_\infty)\|v_2-v_1\|,
\]
that completes the Lemma.
%
%
%
\end{proof}

\vskip .1in
For BP-PR  \eqref{PtychoPR}, or Model I in the noise free case (without constraints), trivial solutions always exist, \emph{i.e.} scaling, reflection or translation.  Moreover, there exists  nontrivial solutions, which possibly depends on the scanning geometry.
Especially, with periodical-lattice based scanning, there exists evident  structural artifacts in the recovered images \cite{thibault2009probe}. Specifically, assuming that  $(\omega,u)$ is a solution to \eqref{PtychoPR} for the noiseless case,
 the nontrivial solution
 $( \hat\omega, \hat u)$
 can be generated as 
 $( \hat\omega, \hat u):=( p_1\circ \omega, p_2\circ u)$ with two periodical   functions $p_1,p_2$ (not constants) satisfying
 $\mathcal S_{j_1} p_2=\mathcal S_{j_2} p_2\equiv \tfrac{\bm 1_{\bar m}}{p_1}~\forall 0\leq j_1, j_2\leq J-1,$
such that
$p_1\circ\mathcal S_j p_2\equiv \bm 1_{\bar m}.$
Readily, it can be seen how $( \hat \omega, \hat u)$ is a solution to \eqref{PtychoPR}, since
\[
\hat \omega\circ \mathcal S_j \hat u= (p_1 \circ\omega)\circ \mathcal S_j  (p_2\circ u)=(p_1\circ \mathcal S_j p_2)\circ (\omega\circ\mathcal S_j u)=\omega\circ\mathcal S_j u.
\]

Due to the existence of nontrivial solutions to \eqref{PtychoPR}, the existing algorithms may get trapped into finding a solution with periodical structures (Figure \ref{fig3} (b, k) and Figure \ref{fig66} (b, k)), which  completely breaks features of the image of interest.
In order to deal with  nontrivial ambiguities,   non-periodical lattices based scanning can be considered  experimentally to remove the periodicity of the scanning geometry, \emph{e.g.} adding a small amount of
random offsets to a set of square-lattice (or raster grid) \cite{maiden2009improved}, using a circular scan geometry \cite{dierolf2010ptychographic} or Fermat spiral scan geometry \cite{huang2014optimization}.
However, in practice, square lattice  or other periodical lattices based  scanning are very popular for being straightforward and easy to implement, and we have to explore  more a-priori information of the unknown probe to  attenuate artifacts with the periodical scanning geometry.

In real experiments, the specimen can be simply removed to collect the diffraction pattern of the unknown probe in far-field. Hence, in the following optimization problem, we
leverage the additional data $\bm c\in\mathbb R^{\bar m}_+$ to eliminate structural artifacts and  further increase the reconstruction accuracy:
\begin{equation}\label{eqModelpriorII}
\text{Model II:\qquad} \min\limits_{\omega\in \mathbb C^{\bar m},u\in\mathbb C^n}
 \mathcal G(\mathcal A(\omega,u)){+\mathbb I_{\mathscr X_1}(\omega)+\mathbb I_{\mathscr X_2}(u)}+\tau \widehat{\mathcal G}(\mathcal F\omega),
\end{equation}
where $\tau$ is a positive parameter,
{the additional measurement $\bm c$ is the diffraction pattern (absolute value of Fourier transform of the probe) as $\bm c:=|\mathcal F u|$, and
$
\widehat{\mathcal G}(z):=\mathcal B(|z|^2,\bm c^2),
$
with $\bm c^2$ denoting the pointwise square of the vector $c$.}
For simplicity, assume that $\mathcal G$ and $\hat{\mathcal G}$ adopt the same metric. We remark that such information can be used as a constraint condition as in \cite{marchesini2016sharp}.
Similarly to Proposition \ref{thm0}, the existence of a minimizer to Model II can be readily derived.
%
\vskip .1in
\section{Numerical Algorithms}\label{sec3}
The proposed models are non-convex and non-smooth, and they can be solved by a
{  block coordinate descent method with proximal linear version \cite{xu2013block}, or  PALM \cite{bolte2014proximal}, equivalently}.
We conduct an experiment using PALM \cite{bolte2014proximal} for Model I. Equivalent results can be obtained using the block coordinate descent method, BCD  \cite{xu2013block}, and the detailed iterative scheme can be found in Appendix \ref{apdx-0}.
The recovery results using PALM (BCD) are presented in Figure \ref{fig1-R}.
By observing the recovery images in Figure \ref{fig1-R} (b, c), obvious artifacts including the periodical artifacts and ringing artifacts of the edges can be observed.
Moreover, one can readily see that PALM (BCD) for Model I  converges very slow inferred from Figure \ref{fig1-R} (d),  
{since in order to make these iterative algorithms be stable,  very small stepsizes are selected manually, which results in  slow convergence speed.}
\begin{figure}[h!]
\begin{center}
\subfigure[Truth]{\includegraphics[width=.3\textwidth]{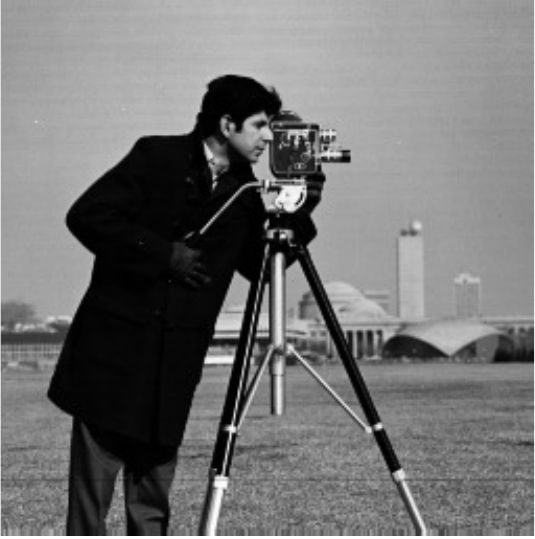}}
\subfigure[pAGM]{\includegraphics[width=.3\textwidth]{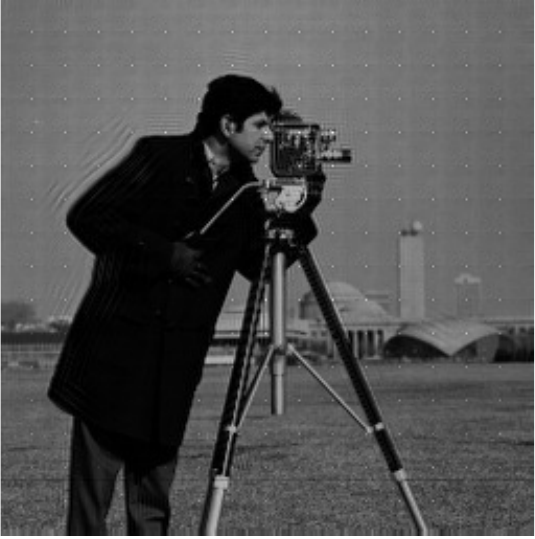}}
\subfigure[pIPM]{\includegraphics[width=.3\textwidth]{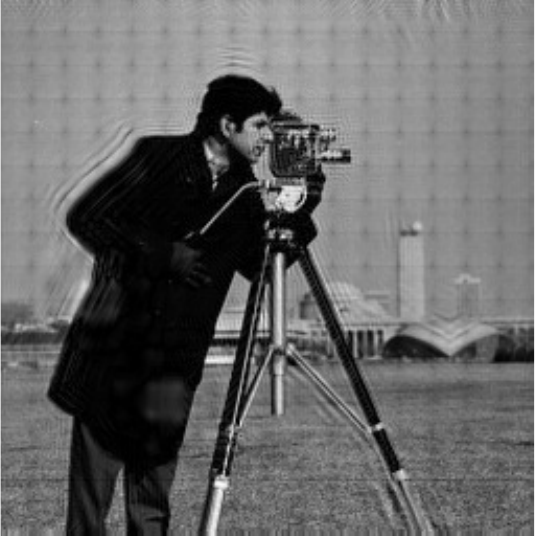}}\\
\subfigure[]{\includegraphics[width=.4\textwidth]{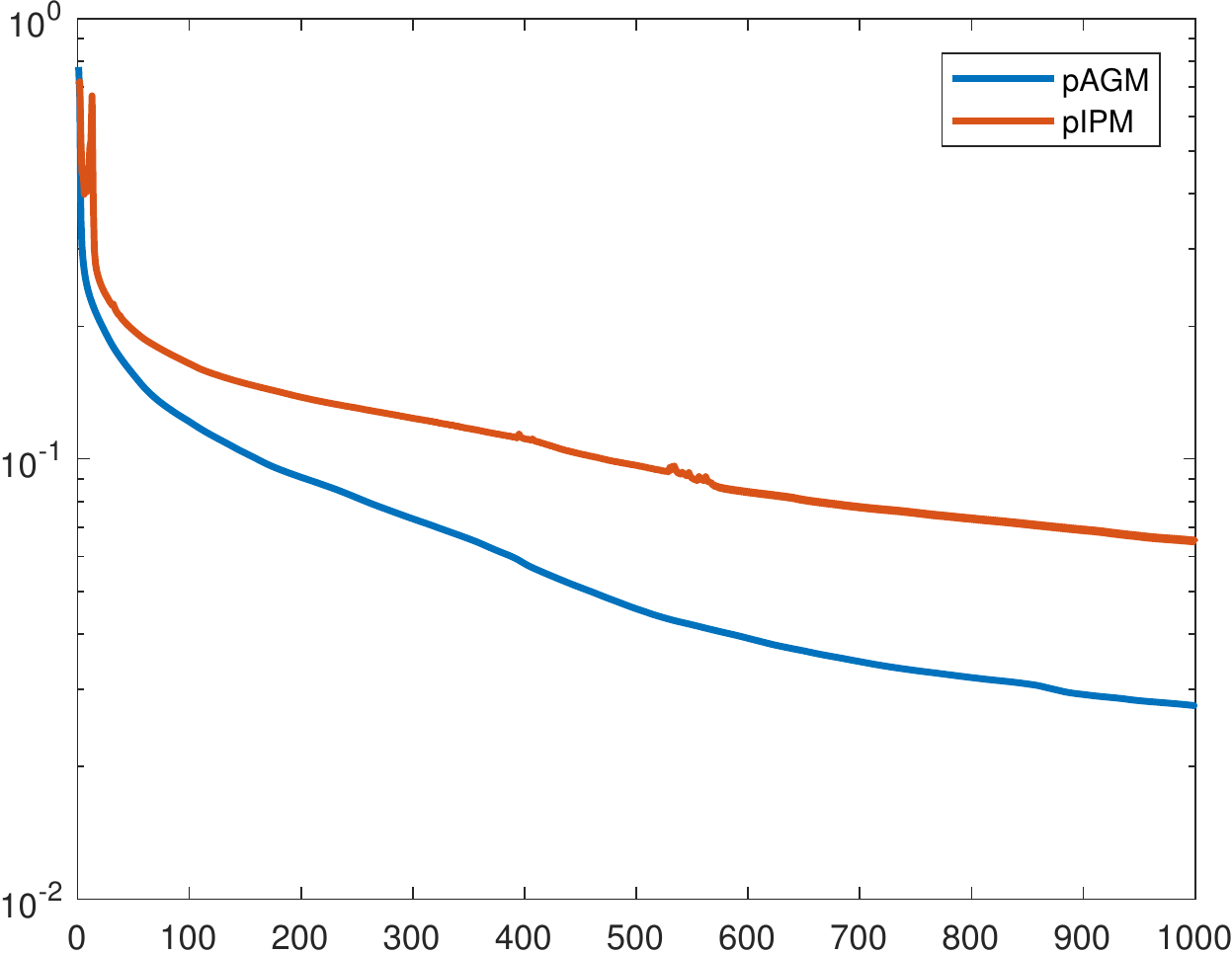}}
\end{center}
\caption
{ Truth in (a), recovery images in (b, c) for Model I with pAGM and pIPM, and convergence histories  (R-factor denotes the system residual) in (d) . All algorithms stop if  $\mathrm{R-factor}^k\leq 1.0\times 10^{-6}$ or iteration number reaches $1000.$ $\mathrm{Dist}=16.$ Stepsizes $(\tau^k_1,\tau^k_2)=$
$(1.0\times 10^{-6},1.5\times 10^{-11})$, and $(5.0\times 10^{-6}, 1.0\times 10^{-6})$ for (b, c) respectively.  Set $C_u=C_\omega=1.0\times 10^8.$ $\varepsilon=1.0\times 10^{-8}\times \|f\|_\infty.$
}
\label{fig1-R}
\end{figure}

Alternatively,   the generalized ADMM   will be adopted to solve  the proposed models, which permits bigger stepsizes by avoiding  directly calculating the gradient of the objective functional such that  fast convergence speed is gained. By introducing  new variables,  the proposed optimization problems  are decoupled by operator-splitting of  ADMM and the  resulted subproblems  can be easily solved.  {Additionally,  proximal terms in the subproblems  are introduced in order to guarantee the sufficient decrease of the augmented Lagrangian, such that the convergence of the proposed algorithm can be derived under mild conditions.}

\subsection{Generalized ADMM  for Model I}
An auxiliary variable
$z=\mathcal A(\omega, u)\in\mathbb C^{m}$ is introduced such that  an  equivalent form of \eqref{eqModel} is formulated as follows:
\begin{equation}\label{eqI-1}
\min\limits_{\omega,u,z} \mathcal G(z){+\mathbb I_{\mathscr X_1}(\omega)+\mathbb I_{\mathscr X_2}(u),}~~s.t.~~~z-\mathcal A(\omega, u)=0.
\end{equation}
The corresponding augmented Lagrangian reads
\begin{equation}\label{eqAL}
\begin{split}
\!\!\!\!\Upsilon_\beta&(\omega,u,z,\Lambda):=\mathcal G(z){+\mathbb I_{\mathscr X_1}(\omega)+\mathbb I_{\mathscr X_2}(u)}
+\Re(\langle z-\mathcal A(\omega, u), \Lambda \rangle)
+\tfrac{\beta}{2}\|z-\mathcal A(\omega, u)\|^2,\!\!\!\!
\end{split}
\end{equation}
with the multiplier $\Lambda\in \mathbb C^m$ and  a positive parameter $\beta,$
where $\langle\cdot,\cdot\rangle$ corresponds to the $L^2$ inner product in  complex Euclidean space and $\Re(\cdot)$ denotes the real part of a complex number.
Consequently, instead of  minimizing \eqref{eqModel} directly, one seeks a saddle point of the following problem:
\begin{equation}\label{eqSaddle}
\max_{\Lambda}\min\limits_{\omega,u,z} \Upsilon_\beta(\omega,u,z,\Lambda).
\end{equation}
A natural scheme to solve the above saddle point problem is to split them, which consists of four-step iterations for { the generalized ADMM (only the subproblems \emph{w.r.t.} $\omega$ or $u$ have proximal terms), as follows:
\begin{equation}\label{eqADMM}
\left\{
\begin{aligned}
&\mbox{Step 1:\quad} \omega^{k+1}=\arg\min\limits_\omega {\Upsilon_\beta(\omega,u^k,z^k,\Lambda^k)+\tfrac{\alpha_1}{2}\|\omega-\omega^k\|^2_{M_1^k}}\\
&\mbox{Step 2:}\quad u^{k+1}=\arg\min\limits_u  {\Upsilon_\beta(\omega^{k+1},u,z^k,\Lambda^k)+\tfrac{\alpha_2}{2}\|u-u^k\|^2_{M_2^k}}\\
&\mbox{Step 3:\quad} z^{k+1}=\arg\min\limits_z \Upsilon_\beta(\omega^{k+1},u^{k+1},z,\Lambda^k)\\
&\mbox{Step 4:\quad} \Lambda^{k+1}=\Lambda^{k}+\beta(z^{k+1}-\mathcal A(\omega^{k+1},u^{k+1})),\\
\end{aligned}
\right.
\end{equation}
{with  diagonal positive semidefinite  matrices  $M^k_1\in\mathbb R_+^{\bar m\times\bar m}$ and $M^k_2\in \mathbb R_+^{n\times n}$ (we call them preconditioning matrices as in \cite{zhang2011unified}) and two penalization parameters $\alpha_1, \alpha_2>0,$}
where $\|\omega\|^2_{M_1^k}:=\langle M_1^k\omega,\omega\rangle$ and
$\|u\|^2_{M_2^k}:=\langle M_2^ku,u\rangle$, given the approximated solution $(\omega^k,u^k,z^k,\Lambda^k)$ in the $k^{\text{th}}$ iteration.
 Here we remark that these two matrices $M_1^k, M_2^k$ are assumed to be diagonal so that subproblems in Step 1 and Step 2 have closed form solutions.}
{In practice, these two matrices are chosen heuristically, and a special strategy  to guarantee the convergence will be specified in section \ref{sec4} (See \eqref{eqM1}).}

\subsubsection{Subproblems w.r.t. $\omega$ and $u$}
First, we consider the subproblem in Step 1 of \eqref{eqADMM} \emph{w.r.t.} the probe $\omega$:
\[
\begin{split}
&\omega^{k+1}=\arg\min\limits_{\omega\in \mathscr X_1} \tfrac{1}{2}\| \hat z^k-\mathcal A(\omega,u^k)\|^2{+\tfrac{\alpha_1}{2\beta}\|\omega-\omega^k\|^2_{M_1^k}}\\
&=\arg\min\limits_{\omega\in \mathscr X_1} \tfrac{1}{2} {\sum\nolimits_j}\|\mathcal F^{-1} \hat z_j^k-\omega\circ \mathcal S_j u^k\|^2{+\tfrac{\alpha_1}{2\beta}\|\omega-\omega^k\|^2_{M_1^k}}\\
&=\arg\min\limits_{\omega\in \mathscr X_1} \sum\limits_{t=0}^{\bar m-1} \rho^k_t(\omega(t)), \text{~with~}
\hat z^k:=z^k+\tfrac{\Lambda^k}{\beta},
\end{split}
\]
 and
$\rho^k_t(x):=\tfrac{1}{2} \sum_j|(\mathcal F^{-1}  \hat z_j^k)(t)-x \times(\mathcal S_j u^k)(t)|^2{+\tfrac{\alpha_1}{2\beta} \mathrm{diag}(M_1^k)(t)\times|x-\omega^k(t)|^2}~\forall x\in\mathbb C,
$
{where $\omega(t)$ denotes the $t^{\text{th}}$ element of $\omega\forall 0\leq t \leq \bar m-1.$}
{ Essentially this subproblem \emph{w.r.t.} each element $\omega(t)$ is independent, such that one just needs to solve the following one-dimension constraint quadratic  problem:
\[
\omega^{k+1}(t)=\arg\min_{|x|\leq C_\omega} \rho_t^k(x)~0\leq t\leq \bar m-1.
\]
}
The derivative of $\rho^k_t(x)$  is calculated  as
\[
\begin{split}
&\quad\nabla \rho^k_t(x)\\
                &=\sum\nolimits_j\big(x\times \left|\left(\mathcal S_j u^k\right)(t)\right|^2-(\mathcal S_j u^k)^*(t)(\mathcal F^{-1}  \hat z_j^k)(t)\big){+\tfrac{\alpha_1}{\beta} \mathrm{diag}(M_1^k)(t)\times(x-\omega^k(t))}\\
                 &=x\times\big( \sum\nolimits_j\left|\left(\mathcal S_j u^k\right)(t)\right|^2+{\tfrac{\alpha_1}{\beta}\mathrm{diag}(M_1^k)(t)}\big)-\sum\nolimits_j\left((\mathcal S_j u^k)^*(t)(\mathcal F^{-1} \hat z_j^k)(t)\right)\\
                 &\hskip 7cm {-\tfrac{\alpha_1}{\beta} \mathrm{diag}(M_1^k)(t)\omega^k(t)},
\end{split}
\]
{where $\mathrm{diag}(M_1^k)(t)$ denotes the $t^{\text{th}}$ element of $\mathrm{diag}(M_1^k)$.}
{Letting $M_1^k$  satisfy
\begin{equation}\label{eqM1-Cond}
\min\nolimits_t  \sum\nolimits_j\left|\left(\mathcal S_j u^k\right)(t)\right|^2+\tfrac{\alpha_1}{\beta}\mathrm{diag}(M_1^k)(t)>0,
\end{equation}
the close form solution
 of Step 1 is given  as
\begin{equation}\label{eqOmega}
\omega^{k+1}=\mathrm{Proj}\Big(
\tfrac{\beta\sum\nolimits_j(\mathcal S_j u^k)^*\circ(\mathcal F^{-1} \hat z_j^k){+{\alpha_1} \mathrm{diag}(M_1^k)\circ\omega^k}}
{\beta\sum\nolimits_j\left|\mathcal S_j u^k\right|^2+{\alpha_1}\mathrm{diag}(M_1^k)}
; C_\omega\Big),
\end{equation}
with the projection operator\footnote{
It is the close form expression for the minimizer to the problem
$\min_{\|\tilde\omega\|_\infty\leq C_\omega} \tfrac12\|\tilde\omega-\omega\|^2.$
}
onto  $\mathscr X_1$ defined as
$\mathrm{Proj}(\omega;C_\omega):=\min\{C_\omega, |\omega|\}\circ \mathrm{sign}(\omega).$
}

 %
Second, we consider the subproblem in Step 2 of \eqref{eqADMM} \emph{w.r.t.} the variable $u$ as
\[
\begin{aligned}
u^{k+1}=&\arg\min_{u\in \mathscr X_2} \tfrac{\beta}{2}\sum\nolimits_j\|\hat z_j^k\!-\! \mathcal F(\omega^{k+1} \circ \mathcal S_j u)\|^2{+{\alpha_2}\|u-u^k\|^2_{M_2^k}}\\
=&\arg\min_u \tfrac{\beta}{2}\sum\nolimits_j\|\mathcal S_j^T\mathcal F^{-1}\hat z_j^k- (\mathcal S_j^T\omega^{k+1}) \circ  u\|^2{+{\alpha_2}\|u-u^k\|^2_{M_2^k}}.
\end{aligned}
\]
Similarly to \eqref{eqOmega},
{letting $M_2^k$  satisfy
\begin{equation}\label{eqM2-Cond}
\min\nolimits_t  \sum\nolimits_j\left|\left(\mathcal S_j^T \omega^{k+1}\right)(t)\right|^2+\tfrac{\alpha_2}{\beta}\mathrm{diag}(M_2^k)(t)>0,
\end{equation}
}
we have
{
\begin{equation}\label{eqU}
u^{k+1}=\mathrm{Proj}\Big(\tfrac{\beta\sum_j \mathcal S_j^T ((\omega^{k+1})^*\circ\mathcal F^{-1} \hat z_j^k)+\alpha_2\mathrm{diag}(M_2^k)\circ u^k}{\beta \sum_j (\mathcal S_j^T |\omega^{k+1}|^2) +\alpha_2\mathrm{diag}(M_2^k)}; C_u \Big).
\end{equation}
}

%
%

\subsubsection{Subproblem w.r.t. $z$}

The subproblem  \emph{w.r.t.} the variable $z$ reads
\begin{equation}\label{eqZ}
\begin{split}
\!\!\!z^{k+1}\!=\arg\min\limits_z \mathcal G(z)+\tfrac{\beta}{2}\big\|z- \mathcal A(\omega^{k+1},u^{k+1})+\tfrac{\Lambda^k}{\beta}\big\|^2=\mathrm{Prox}^\beta_{\mathcal G}\big(\mathcal A(\omega^{k+1},u^{k+1})-\tfrac{\Lambda^k}{\beta}\big),\!\!\!\\
\end{split}
\end{equation}
where the proximal operator \cite{moreau1962fonctions} $\mathrm{Prox}_{\mathcal G}^\beta: \mathbb C^m\rightarrow \mathbb C^m$ is defined as:
\[
\mathrm{Prox}_{\mathcal G}^\beta(z):=\arg\min_{\hat z} \mathcal G(\hat z)+\tfrac{\beta}{2}\|\hat z-z\|^2,
\]
and it is simplified by  $\mathrm{Prox}_{\mathcal G}(z)$ in the case of $\beta=1.$


For the case of pIPM, we have
\[
z^{k+1}=\arg\min_z \tfrac12\langle |z|^2+\varepsilon\mathbf 1_m-(f+\varepsilon\mathbf 1_m)\circ\log(|z|^2+\varepsilon\mathbf 1_m), \mathbf 1_m \rangle+\tfrac{\beta}{2}\|z-z^+\|^2,
\]
with $z^+=\mathcal A(\omega^{k+1},u^{k+1})-\tfrac{\Lambda^k}{\beta}.$
The solution can be expressed as
\begin{equation}\label{eqZ-1}
z^{k+1}=\rho^\star \circ\mathrm{sign}(z^+),
\end{equation}
 where
\begin{equation}\label{eqZ-2}
\rho^\star(t)=\arg\min_{0\leq x\in\mathbb R}\tfrac12 x^2-(f(t)+\varepsilon)\log(x^2+\varepsilon)+\tfrac\beta2(x-|z^+(t)|)^2.
\end{equation}
The first optimality condition of the above optimization problem without constraints yields a quartic equation, such that it is possible to compare the function values at all non-negative roots. The root with the minimal value is the exact solution to this optimization problem. However, the computation cost to calculate all  roots of a quartic equation is still relatively high. Alternatively, the gradient projection scheme can be expressed as:
\begin{equation}\label{eqZ-4}
x_{l+1}=\max\left\{0,x_l-\delta \big((1+\beta-\tfrac{f(t)+\varepsilon}{|x_l|^2+\varepsilon})x_l-\beta z^+(t) \big)\right\}, \forall~l=0,1,\ldots
\end{equation}
with the stepsize $\delta>0,$ and   $x_0:=|z^k(t)|.$

For the case of pAGM, similarly to the case of pIPM, the solution $z^{k+1}$ can be given by \eqref{eqZ-1}, and the iterative scheme for $\rho^\star(t)$ is given as:
\begin{equation}\label{eqZ-3}
x_{l+1}=\max\left\{0, x_l-\delta \big((1+\beta-\tfrac{\sqrt{f(t)+\varepsilon}}{\sqrt{|x_l|^2+\varepsilon}})x_l-\beta z^+(t) \big)\right\}, \forall~l=0,1,\ldots
\end{equation}
Based on the above calculations, the generalized ADMM for Model I is summarized in Algorithm \ref{algADMM}.

\vskip .1in
\begin{algorithm}[tb!]
\caption{\qquad\qquad  Generalized ADMM for Model I in \eqref{eqModel}}
\begin{algorithmic}[1]
\REQUIRE~~Set $\omega^0, u^0, z^0=\mathcal A(\omega^0, u^0), \Lambda^0=0,  k:=0,$ maximum iteration number $Iter_{Max}$, and parameters $\beta$, {$\alpha_1$ and $\alpha_2$.}
\ENSURE~~$u^\star:=u^{Iter_{Max}}$ and $\omega^\star:=\omega^{Iter_{Max}}.$\vskip .1in
\FOR{$k=0$ to $Iter_{Max}-1$}
\STATE Compute $\omega^{k+1}$ by \eqref{eqOmega} with $\hat z^k:=z^k+\tfrac{\Lambda^k}{\beta},$ where $M_1^k$ satisfies \eqref{eqM1-Cond}.
\STATE
Compute $u^{k+1}$ by \eqref{eqU}, where $M_2^k$ satisfies \eqref{eqM2-Cond}. 
\STATE Compute $z^{k+1}$ by \eqref{eqZ}.
\STATE Update the multiplier as Step 4 of \eqref{eqADMM}.
\ENDFOR
\end{algorithmic}
\label{algADMM}
\end{algorithm}


\subsection{Generalized ADMM for Model II}
By introducing two auxiliary variables $z_1\in \mathbb C^m$ and $z_2\in \mathbb C^{\bar m}$, an equivalent form of \eqref{eqModelpriorII} can be obtained as:
\begin{equation*}
\begin{split}
 \min\limits_{\omega,u,z_1,z_2} &\qquad \mathcal G(z_1){+\mathbb I_{\mathscr X_1}(\omega)+\mathbb I_{\mathscr X_2}(u)}+\tau\widehat{\mathcal G}(z_2),\\
         s.t.     ~~  & \qquad z_1-\mathcal A(\omega, u)=0,~z_2-\mathcal F \omega=0.
 \end{split}
\end{equation*}
The corresponding augmented Lagrangian reads
\begin{equation}\label{eqALII}
\begin{split}
&\widehat\Upsilon_{\beta_1,\beta_2}(\omega,u,z_1,z_2,\Lambda_1,\Lambda_2 ):=\mathcal G(z_1){+\mathbb I_{\mathscr X_1}(\omega)+\mathbb I_{\mathscr X_2}(u)}+\Re(\langle z_1-\mathcal A(\omega, u), \Lambda_1 \rangle)\\
&\qquad\quad+\tfrac{\beta_1}{2}\|z_1-\mathcal A(\omega, u)\|^2
+\tau\widehat{\mathcal G}(z_2)
+\Re(\langle z_2-\mathcal F \omega, \Lambda_2 \rangle)
+\tfrac{\beta_2}{2}\|z_2-\mathcal F \omega\|^2,
\end{split}
\end{equation}
with two multipliers $\Lambda_1\in \mathbb C^m, \Lambda_2\in \mathbb C^{\bar m}$ and positive parameters $\beta_1$ and $\beta_2.$
Consequently, one needs to solve a saddle point problem as follows:
\[
\max_{\Lambda_1,\Lambda_2}\min\limits_{\omega,u,z_1,z_2} \widehat\Upsilon_{\beta_1,\beta_2}(\omega,u,z_1,z_2,\Lambda_1,\Lambda_2).
\]
The generalized ADMM  is also used to solve the above problem,
and in the following discussion, we focus on  the subproblems of ADMM.
First, we consider the subproblem  \emph{w.r.t.}  $\omega$:
\[
\!\!\omega^{k+1}
=\arg\min\limits_{\omega\in \mathscr X_1} 
\tfrac{\beta_1}{2}\big\|z^k_1+\tfrac{\Lambda^k_1}{\beta_1}-\mathcal A(\omega, u^{k})\big\|^2+\tfrac{\beta_2}{2}\big\|\mathcal F^{-1}(z^k_2+\tfrac{\Lambda^k_2}{\beta_2})-\omega\big\|^2
+\tfrac{\bar\alpha_1}{2}
\|\omega-\omega^k\|^2_{\bar M_1^k},\!\!
\]
{
with the penalization parameter $\bar\alpha_1>0$ and the preconditioning matrix $\bar M_1^k.$}
Similarly to \eqref{eqOmega}, by computing the first-order derivative,  one readily obtains the solution:
\begin{equation}\label{eqOmega-2}
{{ \omega}^{k+1}=\mathrm{Proj}\Big(\tfrac {\beta_1\sum\nolimits_j(\mathcal S_j u^k)^*\circ(\mathcal F^{-1}\bar z^k_{1,j})+\beta_2 \mathcal F^{-1}\bar z^k_{2}+\bar\alpha_1 \bar M_1^k \omega^k}
{\beta_1\sum\limits\nolimits_j\left|\mathcal S_j u^k\right|^2+\beta_2\bm 1_{\bar m}+\bar \alpha_1 \mathrm{diag}(\bar M_1^k) };
C_\omega\Big),}
\end{equation}
with  $\bar z_l^k=z_l^k+\tfrac{\Lambda^k_l}{\beta_l},~l=1,2$ and
$\bar z_1^k=((\bar z_{1,0}^k)^T,(\bar z_{1,1}^k)^T,\cdots,(\bar z_{1,J-1}^k)^T)^T\in \mathbb C^m$ with $\bar z^k_{1,j}\in \mathbb C^{\bar m}~\forall 0\leq j\leq J-1.$

Next, we consider the subproblem \emph{w.r.t.} $u$. According to  \eqref{eqU},  its solution is directly given below:
{
\begin{equation}\label{eq16}
\min_t \beta_1\sum_j (\mathcal S_j^T |\omega^{k+1}|^2)(t)+ \bar\alpha_2\mathrm{diag}(\bar M_2^k)(t)>0,
\end{equation}
\begin{equation}\label{eqU-2}
u^{k+1}=\mathrm{Proj}\Big(
\tfrac{\beta_1\sum_j (\mathcal S_j^T ((\omega^{k+1})^*\circ\mathcal F^{-1} \bar z_{1,j}^k))+\bar\alpha_2\bar M_2^k u^k}{{ \beta_1\sum_j (\mathcal S_j^T |\omega^{k+1}|^2)+ \bar\alpha_2\mathrm{diag}(\bar M_2^k)}}; C_u\Big),
\end{equation}
with the penalization parameter $\bar \alpha_2>0,$ and the preconditioning matrix $\bar M_2^k.$
}

Regarding the subproblems \emph{w.r.t.}  $z_1$ and $z_2,$  their closed form solutions can be directly derived as follows:
\begin{equation}\label{eqZII}
z_1^{k+1}=\mathrm{Prox}^{\beta_1}_{\mathcal G}\big(\mathcal A(\omega^{k+1}, u^{k+1})-\tfrac{\Lambda_1^k}{\beta_1}\big),\text{~and~}
z_2^{k+1}=\mathrm{Prox}_{\widehat {\mathcal G}}^{\beta_2/\tau}\big(\mathcal F\omega^{k+1}-\tfrac{\Lambda_2^k}{\beta_2}\big),
\end{equation}
where  the proximal operators can  be solved similarly to \eqref{eqZ-1}. 

The generalized ADMM for Model II is summarized in Algorithm~\ref{algADMM-II}.
\begin{algorithm}[]
\caption{\qquad\qquad\qquad Generalized ADMM for Model II in \eqref{eqModelpriorII}}
\begin{algorithmic}[1]
\REQUIRE~~Set $\omega^0, u^0, z_1^0=\mathcal A(\omega^0, u^0), z_2^0=\mathcal F \omega^0, \Lambda_1^0=0, \Lambda_2^0=0, k:=0,$ maximum iteration number $ Iter_{Max}$, and parameters $\beta_1, \beta_2,\tau,$  {$\bar\alpha_1$ and $\bar\alpha_2$.}
\ENSURE~~$u^\star:=u^{Iter_{Max}}$ and $\omega^\star:=\omega^{Iter_{Max}}.$\vskip .1in
\FOR{$k=0$ to $Iter_{Max}-1$}
\STATE Compute $\omega^{k+1}$ by \eqref{eqOmega-2} with $\bar z_l^k:=z_l^k+\tfrac{\Lambda_l^k}{\beta}, l=1,2.$
\STATE
Compute $u^{k+1}$ by \eqref{eqU-2} where $\bar M_2^k$ satisfies \eqref{eq16}. 
\STATE Compute $z_1^{k+1}, z_2^{k+1} $ by \eqref{eqZII}.
\STATE Update multipliers as
\[
\begin{split}
&\Lambda_1^{k+1}=\Lambda_1^{k}+\beta_1(z_1^{k+1}-\mathcal A(\omega^{k+1}, u^{k+1})),\\
&\Lambda_2^{k+1}=\Lambda_2^{k}+\beta_2(z_2^{k+1}-\mathcal F \omega^{k+1}).
\end{split}
\]
\ENDFOR
\end{algorithmic}
\label{algADMM-II}
\end{algorithm}

\vskip .05in
Since $\mathcal F$ is an orthogonal operator, and the operator $\mathcal A$ is block-wise defined based on element-wise multiplications of the probe and image  for Ptycho-PR, their subproblems \emph{w.r.t.} $\omega$ and $u$   have closed form solutions.
We remark that, for the $z-$subproblem,  very few iterations are needed to guarantee the convergence of the entire ADMM in practice, due to the error-forgetting property \cite{yin2013error}.

\section{Convergence Analysis}\label{sec4}
First, we briefly review the framework for the convergence analysis of an iterative algorithm for nonconvex optimization problems. Consider an optimization problem
$\min_v \Phi(v),$
with the functional $\Phi$ being  proper, lower semi-continuous, and bounded below.
Let an iterative sequence $\{v^k\}_{k=0}^\infty$ be generated by some algorithm to solve the above minimization problem. {In light of \cite{attouch2010proximal,bolte2014proximal,hong2016convergence,wang2015global,lou2016fast,mei2016cauchy,yang2017alter},} in order to prove that the iterative sequence  converges to stationary points of $\Phi$, the following three conditions should  be satisfied:

\vspace{1mm}

\hspace{-3mm} \textbf{i)} Sufficient decrease condition as
\vspace{1mm}
\begin{equation}\label{condSD}
\Phi(v^k)-\Phi(v^{k+1})\geq c_1\|v^{k+1}-v^k\|^2, \phantom{..}\mbox{with a positive constant $c_1$.}
\end{equation}

\vspace{1mm}
\hspace{-3mm} \textbf{ii)} Relative error condition as
\vspace{1mm}
\begin{equation}\label{condRE}
\|p^{k+1}\|\leq c_2\|v^{k+1}-v^k\|, \mbox{ with some $p^{k+1}\in \partial \Phi(v^{k+1})$, and a positive constant $c_2$.}
\end{equation}

\hspace{-3mm} \textbf{iii)} Kurdyka-{\L}ojasiewicz (KL) property~\cite{attouch2010proximal}.
\vspace{2mm}

The first two conditions can guarantee that each limiting point of the iterative sequence is a stationary point of $\Phi,$ which demonstrates the subsequence convergence. Furthermore, by the KL property of $\Phi$, the iterative sequence can be proved to be a Cauchy sequence and therefore convergence can be reached.
Along this line, the convergence for  ADMM  was established  for noncovex  problems~\cite{hong2016convergence,wang2015global,lou2016fast,mei2016cauchy,yang2017alter}. 

In this section we analyze the convergence of the proposed generalized ADMM algorithm for this nonconvex optimization problem with the bilinear constraint. 
The sufficient-overlap condition of iterative sequences $\{\omega^k\}$ and $\{u^k\}$, which will be assumed to be  bounded below by a positive constant,  the difficulty brought by the bilinear constraint  can be overcome. Thanks to the Lipschitz property of pAGM/pIPM based $\mathcal G$ (in Lemma \ref{assump1}), and the KL property of the objective functional of the proposed models, we shall be able to prove the convergence of the proposed algorithms.

The following notation is used for the successive error of the iterative sequences:
\begin{equation}\label{eqST}
E_\omega^{k+1}:=\omega^{k+1}\!-\!\omega^k, E_u^{k+1}:=u^{k+1}\!-\!u^k,
E_z^{k+1}:=z^{k+1}\!-\!z^k, E_\Lambda^{k+1}:=\Lambda^{k+1}\!-\!\Lambda^k.
\end{equation}

\vskip .1in
\subsection{Convergence of Algorithm \ref{algADMM}}\label{sec4-1}

We show the first order optimality conditions for  subproblems of \eqref{eqADMM}.
Letting $X^k:=(\omega^k, u^k, z^k,\Lambda^k)$ be generated by  Algorithm \ref{algADMM}, the following relations hold:
{
\begin{align}
&0\in{\omega}^{k+1}\circ{\sum\nolimits_j\left|\mathcal S_j u^k\right|^2}-{\sum\nolimits_j(\mathcal S_j u^k)^*\circ\mathcal F^{-1}(z_j^k+\tfrac{1}{\beta}\Lambda_j^k)}\label{eqOptC-1}\\
&\hskip 3cm+\tfrac{1}{\beta}\partial \mathbb I_{\mathscr X_1}(\omega^{k+1}) +\tfrac{\alpha_1}{\beta}\mathrm{diag}(M_1^k) \circ E_\omega^{k+1};\nonumber\\
&0\in u^{k+1}\circ\!\sum\nolimits_j\mathcal S_j^T|\omega^{k+1}|^2\!-\sum\nolimits_j\mathcal S_j^T((\omega^{k+1})^*\circ\mathcal F^{-1}( z_j^k\!+\tfrac{1}{\beta}\Lambda_j^k))\label{eqOptC-2}\\
&\hskip 3cm+\tfrac{1}{\beta}\partial \mathbb I_{\mathscr X_2}(u^{k+1})+\tfrac{\alpha_2}{\beta}\mathrm{diag}(M_2^k)\circ E_u^{k+1};\nonumber\\
&0=\nabla \mathcal G(z^{k+1})+\beta(z^{k+1}-\mathcal A(\omega^{k+1},u^{k+1}))+\Lambda^{k}.
\label{eqOptC-3}
\end{align}
}
Using \eqref{eqOptC-3} and the Step 4 of \eqref{eqADMM}, one can derive
\begin{equation}\label{eqOptC-3-2}
0=\nabla \mathcal G(z^{k+1})+\Lambda^{k+1},~\forall k\geq 0.
\end{equation}

%

%

We first estimate the differences of the augmented Lagrangian  between two successive iterations to show the sufficient decrease as in \eqref{condSD}.
\begin{lem}[Sufficient decrease condition]
\label{lem1}
For all $k\geq 1,$ we have
\begin{equation}\label{eqlem1}
 \begin{split}
 \!\Upsilon_\beta(X^k)\!-\! \Upsilon_\beta(X^{k+1})
 &\geq  \tfrac{\beta}{2} I^k_u\|E_\omega^{k+1}\|^2\!+\!\tfrac{\beta}{2}I^k_\omega \|E_u^{k+1}\|^2\!+\!(\tfrac{\beta\!-\!3L}{2}\!-\!\tfrac{L^2}{\beta})\|E_z^{k+1}\|^2,
 \end{split}
\end{equation}
where $I^k_u\in \mathbb R_+$ and  $I^k_\omega\in \mathbb R_+$ are defined below:
\begin{equation}\label{condBDB}
\left\{
\begin{split}
&I^k_u:=\min_{0\leq t_1\leq \bar m-1}\sum\nolimits_j|(\mathcal S_j u^k)(t_1)|^2{+\tfrac{2\alpha_1}{\beta}\mathrm{diag}(M_1^k)(t_1),}\\
&I^k_\omega:=\min_{0\leq t_2\leq n-1}\sum\nolimits_j |(\mathcal S^T_j \omega^{k+1})(t_2)|^2{+\tfrac{2\alpha_2}{\beta}\mathrm{diag}(M_2^k)(t_2).}
\end{split}
\right.
\end{equation}
\end{lem}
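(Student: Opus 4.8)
\textbf{Proof proposal for Lemma \ref{lem1}.}

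The plan is to telescope the one-iteration decrease of $\Upsilon_\beta$ through the four update steps of \eqref{eqADMM}, bounding each partial difference from below. First I would split
\[
\Upsilon_\beta(X^k)-\Upsilon_\beta(X^{k+1})
=\big(\Upsilon_\beta(\omega^k,u^k,z^k,\Lambda^k)-\Upsilon_\beta(\omega^{k+1},u^k,z^k,\Lambda^k)\big)
+\big(\Upsilon_\beta(\omega^{k+1},u^k,z^k,\Lambda^k)-\Upsilon_\beta(\omega^{k+1},u^{k+1},z^k,\Lambda^k)\big)
\]
\[
\qquad+\big(\Upsilon_\beta(\omega^{k+1},u^{k+1},z^k,\Lambda^k)-\Upsilon_\beta(\omega^{k+1},u^{k+1},z^{k+1},\Lambda^k)\big)
+\big(\Upsilon_\beta(\omega^{k+1},u^{k+1},z^{k+1},\Lambda^k)-\Upsilon_\beta(\omega^{k+1},u^{k+1},z^{k+1},\Lambda^{k+1})\big).
\]
For the $\omega$-step, $\omega\mapsto\Upsilon_\beta(\omega,u^k,z^k,\Lambda^k)$ is, up to the indicator $\mathbb I_{\mathscr X_1}$, a quadratic in $\omega$ whose Hessian is the diagonal matrix $\beta\sum_j|\mathcal S_j u^k|^2$; adding the proximal term $\tfrac{\alpha_1}{2}\|\cdot-\omega^k\|^2_{M_1^k}$ makes the augmented objective strongly convex with modulus (elementwise) $\beta\sum_j|\mathcal S_j u^k|^2+\alpha_1\mathrm{diag}(M_1^k)$. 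Since $\omega^{k+1}$ is the exact minimizer of the augmented objective, a standard strong-convexity argument gives
\[
\Upsilon_\beta(\omega^k,u^k,z^k,\Lambda^k)+\tfrac{\alpha_1}{2}\|\omega^k-\omega^k\|^2_{M_1^k}
\ \ge\ \Upsilon_\beta(\omega^{k+1},u^k,z^k,\Lambda^k)+\tfrac{\alpha_1}{2}\|E_\omega^{k+1}\|^2_{M_1^k}
+\tfrac12\big\|\sqrt{\beta\sum_j|\mathcal S_j u^k|^2+\alpha_1\mathrm{diag}(M_1^k)}\circ E_\omega^{k+1}\big\|^2,
\]
so that, collecting the $M_1^k$-terms, $\Upsilon_\beta(\omega^k,\cdots)-\Upsilon_\beta(\omega^{k+1},\cdots)\ge \tfrac{\beta}{2}\sum_t\big(\sum_j|(\mathcal S_j u^k)(t)|^2+\tfrac{2\alpha_1}{\beta}\mathrm{diag}(M_1^k)(t)\big)|E_\omega^{k+1}(t)|^2\ge \tfrac{\beta}{2} I_u^k\|E_\omega^{k+1}\|^2$, using the definition of $I_u^k$ in \eqref{condBDB}. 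The $u$-step is handled identically with $\mathcal S_j^T$ in place of $\mathcal S_j$, giving the term $\tfrac{\beta}{2}I_\omega^k\|E_u^{k+1}\|^2$.

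For the $z$-step, $z\mapsto\Upsilon_\beta(\omega^{k+1},u^{k+1},z,\Lambda^k)=\mathcal G(z)+\tfrac{\beta}{2}\|z-\mathcal A(\omega^{k+1},u^{k+1})+\Lambda^k/\beta\|^2+\text{const}$, which is strongly convex with modulus $\beta$ once we use that $\mathcal G$, although nonconvex, has the descent property \eqref{eqLipDescent} with constant $L$; more precisely the quadratic part dominates and one obtains $\Upsilon_\beta(\cdots,z^k,\Lambda^k)-\Upsilon_\beta(\cdots,z^{k+1},\Lambda^k)\ge \tfrac{\beta-L}{2}\|E_z^{k+1}\|^2$ (optimality of $z^{k+1}$ together with the lower quadratic bound coming from \eqref{eqLipDescent} applied around $z^{k+1}$). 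For the multiplier step, the identity $\Upsilon_\beta(\cdots,\Lambda^k)-\Upsilon_\beta(\cdots,\Lambda^{k+1})=-\Re\langle z^{k+1}-\mathcal A(\omega^{k+1},u^{k+1}),E_\Lambda^{k+1}\rangle=-\tfrac1\beta\|E_\Lambda^{k+1}\|^2$ holds by Step 4. The key device is then to control $\|E_\Lambda^{k+1}\|$ by $\|E_z^{k+1}\|$: from \eqref{eqOptC-3-2}, $\Lambda^{k+1}=-\nabla\mathcal G(z^{k+1})$ and $\Lambda^k=-\nabla\mathcal G(z^k)$, so the Lipschitz differentiability \eqref{eqLC} of $\mathcal G$ (Lemma \ref{assump1}) gives $\|E_\Lambda^{k+1}\|\le L\|E_z^{k+1}\|$, whence $-\tfrac1\beta\|E_\Lambda^{k+1}\|^2\ge -\tfrac{L^2}{\beta}\|E_z^{k+1}\|^2$. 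Adding the four bounds and bookkeeping the $z$-coefficients ($\tfrac{\beta-L}{2}-\tfrac{L^2}{\beta}$ versus the stated $\tfrac{\beta-3L}{2}-\tfrac{L^2}{\beta}$) — the extra $-L\|E_z^{k+1}\|^2$ slack being absorbed from a looser but cleaner application of the descent property, or from combining the $z$-step estimate with a cross term produced when re-expanding the quadratic around $z^k$ instead of $z^{k+1}$ — yields \eqref{eqlem1}.

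The main obstacle I anticipate is the $z$-step estimate and the precise constant in front of $\|E_z^{k+1}\|^2$: because $\mathcal G$ is nonconvex, one cannot simply invoke strong convexity of $\Upsilon_\beta$ in $z$, and must instead combine the optimality of $z^{k+1}$ (which gives $0=\nabla\mathcal G(z^{k+1})+\beta(z^{k+1}-\mathcal A(\omega^{k+1},u^{k+1}))+\Lambda^k$) with the two-sided quadratic estimate from Definition \ref{def1}, being careful that the descent inequality \eqref{eqLipDescent} is applied in the correct direction and at the correct base point; getting the clean coefficient $\tfrac{\beta-3L}{2}-\tfrac{L^2}{\beta}$ requires tracking how the $\Lambda^k\to\Lambda^{k+1}$ replacement interacts with the $z$-difference, since the raw multiplier term is negative and must be over-compensated by the gain from the $z$-subproblem. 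I would organize the calculation so that the $z$-subproblem contributes $\tfrac{\beta}{2}\|E_z^{k+1}\|^2$ from the proximal quadratic, minus $\tfrac{3L}{2}\|E_z^{k+1}\|^2$ from two applications of \eqref{eqLipDescent} (one for the $\mathcal G$ difference $\mathcal G(z^k)-\mathcal G(z^{k+1})$ and one absorbed into the cross terms), and the multiplier step contributes $-\tfrac{L^2}{\beta}\|E_z^{k+1}\|^2$ via \eqref{eqOptC-3-2} and \eqref{eqLC}, which together reproduce the stated bound; the $\omega$- and $u$-contributions are then purely algebraic and cause no difficulty.
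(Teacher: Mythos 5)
Your proposal is correct and follows essentially the same route as the paper: the same four-term telescoping of $\Upsilon_\beta$, the same elementwise strong-convexity bound for the $\omega$- and $u$-blocks yielding $\tfrac{\beta}{2}I_u^k$ and $\tfrac{\beta}{2}I_\omega^k$, the bound $\|E_\Lambda^{k+1}\|\leq L\|E_z^{k+1}\|$ from \eqref{eqOptC-3-2}, and for the $z$-block the combination of the optimality condition \eqref{eqOptC-3} with the descent property and a Cauchy estimate on the cross term $\Re\langle\nabla\mathcal G(z^{k+1})-\nabla\mathcal G(z^k),E_z^{k+1}\rangle$, which is precisely how the paper arrives at the coefficient $\tfrac{\beta-3L}{2}$ (your intermediate claim of $\tfrac{\beta-L}{2}$ is not what is used; the final organization you describe is the correct one).
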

The corresponding proof can be found in Appendix~\ref{apdx1-2}.
We show  the boundedness of the iterative sequence $\{X^k\}$ in the following lemma:
%
%


\begin{lem}(Boundedness)\label{lem2}
Letting $\beta>\tfrac{3+\sqrt{17}}{2}L$, the  sequence $\{X^k\}$ generated by Algorithm \ref{algADMM} is bounded. Furthermore, the sequence of augmented Lagrangian $\{\Upsilon_\beta(X^{k})\}$ is  bounded  and non-increasing.
\end{lem}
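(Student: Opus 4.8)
The plan is to establish boundedness of $\{X^k\}=\{(\omega^k,u^k,z^k,\Lambda^k)\}$ by combining the sufficient decrease estimate of Lemma~\ref{lem1} with a lower bound on the augmented Lagrangian. First I would observe that $\omega^k\in\mathscr X_1$ and $u^k\in\mathscr X_2$ for all $k\geq 1$ by construction (the projection operator in \eqref{eqOmega} and \eqref{eqU} enforces $\|\omega^k\|_\infty\leq C_\omega$ and $\|u^k\|_\infty\leq C_u$), so the $\omega$- and $u$-components are automatically bounded. This also makes $\mathbb I_{\mathscr X_1}(\omega^k)=\mathbb I_{\mathscr X_2}(u^k)=0$ for $k\geq 1$.

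Next I would control $\Lambda^k$ via $z^k$. From the dual update rewritten as \eqref{eqOptC-3-2}, $\Lambda^{k+1}=-\nabla\mathcal G(z^{k+1})$, and by Lemma~\ref{assump1} the gradient $\nabla\mathcal G$ is Lipschitz; moreover $\nabla\mathcal G$ is bounded on any bounded set and, inspecting \eqref{eqGrad-P}, one sees $\|\nabla\mathcal G(z)\|$ grows at most linearly in $\|z\|$. Hence $\|\Lambda^{k+1}\|\leq C(1+\|z^{k+1}\|)$ for a constant $C$ depending only on $f,\varepsilon$. So it remains to bound $\{z^k\}$. For this I would insert the bounds just obtained into the definition \eqref{eqAL} of $\Upsilon_\beta$: using $\langle z-\mathcal A(\omega,u),\Lambda\rangle=\tfrac1\beta\langle z-\mathcal A(\omega,u),\beta\Lambda\rangle$ together with the identity coming from \eqref{eqOptC-3}/Step~4 that relates $\Lambda^k$, $\nabla\mathcal G(z^k)$ and the residual, and completing the square, I would rewrite $\Upsilon_\beta(X^k)$ as $\mathcal G(z^k)$ minus a term like $\tfrac1{2\beta}\|\nabla\mathcal G(z^k)\|^2$ plus $\tfrac\beta2\|z^k-\mathcal A(\omega^k,u^k)+\tfrac{\Lambda^k}\beta\|^2$ plus nonnegative indicator terms. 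Since $\mathcal G\geq 0$ and $\|\nabla\mathcal G(z^k)\|^2\leq L\,\mathcal G$-type estimates are not automatic, I would instead use the Lipschitz descent property \eqref{eqLipDescent} applied at a global minimizer $z^\ast$ of $\mathcal G$ to get $\mathcal G(z^k)-\tfrac1{2\beta}\|\nabla\mathcal G(z^k)\|^2\geq \mathcal G(z^\ast)-\tfrac{L}{2\beta}\|z^k-z^\ast\|^2$ only when $\beta>L$, which is too weak; the cleaner route is: since $\nabla\mathcal G$ is $L$-Lipschitz and $\nabla\mathcal G(z^\ast)=0$, $\|\nabla\mathcal G(z^k)\|\le L\|z^k-z^\ast\|$, and $\mathcal G(z^k)\ge \tfrac{1}{2L}\|\nabla\mathcal G(z^k)\|^2$ would require convexity — so instead I bound $\Upsilon_\beta(X^k)\ge -\tfrac{L^2}{2\beta}\|z^k-z^\ast\|^2 + (\text{nonneg terms})$ via $\|\Lambda^k\|\le L\|z^k-z^\ast\|$ and Young's inequality, which when combined with the fact (from Lemma~\ref{lem1} and $\beta>\tfrac{3+\sqrt{17}}2 L$, which makes all coefficients on the right of \eqref{eqlem1} strictly positive) that $\Upsilon_\beta(X^k)$ is bounded \emph{above} by $\Upsilon_\beta(X^1)$, pins $z^k$ in a ball; a coercivity-in-$z^k$ estimate for $\Upsilon_\beta$ is exactly what closes the loop.

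Concretely, the argument I would run is: (1) Lemma~\ref{lem1} gives $\Upsilon_\beta(X^{k+1})\le\Upsilon_\beta(X^k)\le\dots\le\Upsilon_\beta(X^1)$ once $\beta>\tfrac{3+\sqrt{17}}2 L$, so the sequence $\{\Upsilon_\beta(X^k)\}_{k\ge1}$ is non-increasing and bounded above; (2) using $\omega^k\in\mathscr X_1$, $u^k\in\mathscr X_2$, $\Lambda^k=-\nabla\mathcal G(z^k)$ with $\|\nabla\mathcal G(z^k)\|\le L\|z^k-z^\ast\|$, and substituting the dual relation into \eqref{eqAL}, show $\Upsilon_\beta(X^k)\ge \mathcal G(z^k) - \tfrac{1}{2\beta}\|\nabla\mathcal G(z^k)\|^2 \ge -\tfrac{L^2}{2\beta}\|z^k-z^\ast\|^2$, and then combine with a matching $+\tfrac\beta2\|\cdot\|^2$ residual term to get $\Upsilon_\beta(X^k)\to+\infty$ as $\|z^k\|\to\infty$ (coercivity), so $\{z^k\}$ is bounded; (3) boundedness of $z^k$ forces boundedness of $\Lambda^k$ by the Lipschitz bound, and of $\mathcal A(\omega^{k+1},u^{k+1})$ (hence consistency) follows from Step~4 and the telescoped decrease giving $\|E_z^{k+1}\|,\|E_\omega^{k+1}\|,\|E_u^{k+1}\|\to0$; (4) therefore $\{X^k\}$ is bounded, and since $\{\Upsilon_\beta(X^k)\}$ is non-increasing and bounded below on this bounded set (as $\mathcal G$ and the indicators are bounded below and the quadratic/inner-product terms are controlled on bounded sets), it converges. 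The main obstacle I anticipate is step~(2): extracting an honest \emph{lower bound} on $\Upsilon_\beta$ that is coercive in $z^k$ despite the indeterminate-sign linear term $\Re\langle z-\mathcal A(\omega,u),\Lambda\rangle$ — the trick is to eliminate $\Lambda^k$ entirely using $\Lambda^k=-\nabla\mathcal G(z^k)$ and then lean on the Lipschitz property from Lemma~\ref{assump1} to dominate $\|\nabla\mathcal G(z^k)\|^2$ by $\tfrac{L^2}{\beta}\|z^k-z^\ast\|^2$, which is out-competed by the $\tfrac\beta2$-weighted quadratic once $\beta$ is large enough, precisely the regime $\beta>\tfrac{3+\sqrt{17}}2 L$ already demanded for the sufficient-decrease coefficients to be positive.
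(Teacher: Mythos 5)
Your proposal is correct and follows the same overall architecture as the paper's proof: $\omega^k,u^k$ are bounded by the projections onto $\mathscr X_1,\mathscr X_2$; Lemma~\ref{lem1} with $\beta>\tfrac{3+\sqrt{17}}{2}L$ gives monotone non-increase, hence an upper bound $\Upsilon_\beta(X^k)\le\Upsilon_\beta(X^1)$; the dual relation \eqref{eqOptC-3-2} ($\Lambda^k=-\nabla\mathcal G(z^k)$) plus the Lipschitz property of $\mathcal G$ yields a lower bound on $\Upsilon_\beta$ that is coercive in $z^k$, which pins $z^k$ and then $\Lambda^k$. The one place where your route differs is the lower-bound step: the paper substitutes $\Lambda^{k+1}=-\nabla\mathcal G(z^{k+1})$ into $\Upsilon_\beta$ and applies the descent inequality \eqref{eqLipDescent} between $z^{k+1}$ and $\mathcal A(\omega^{k+1},u^{k+1})$, obtaining directly $\Upsilon_\beta(X^{k+1})\ge\mathcal G(\mathcal A(\omega^{k+1},u^{k+1}))+\tfrac{\beta-L}{2}\|z^{k+1}-\mathcal A(\omega^{k+1},u^{k+1})\|^2$, which needs only $\beta>L$ and bounds the residual (hence $z^{k+1}$) in one line; you instead complete the square, bound $\|\nabla\mathcal G(z^k)\|\le L\|z^k-z^\ast\|$ at a global minimizer $z^\ast$, and balance $-\tfrac{L^2}{2\beta}\|z^k-z^\ast\|^2$ against the $\tfrac{\beta}{2}$-weighted quadratic, which works but requires $\beta>2L$ (still implied by the hypothesis) and slightly more bookkeeping. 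Two small touch-ups: for pIPM, $\mathcal G$ is only bounded below rather than nonnegative, so replace ``$\mathcal G\ge 0$'' by $\mathcal G\ge\inf\mathcal G>-\infty$; and the appeal in your step~(3) to telescoped decrease and $\|E_z^{k+1}\|\to 0$ is unnecessary (and would be premature), since boundedness of $\mathcal A(\omega^{k+1},u^{k+1})$ already follows from the constraint sets.
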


The proof can be found in Appendix \ref{apdx-2}.
In order to make sure that the decreasing quantity of  the augmented Lagrangian is bounded below by $\|X^{k+1}-X^k\|^2$, we need the following assumption:
\begin{assump}[Sufficient-Overlap]\label{assump2}
The sequences $\{I^k_u\}\subseteq \mathbb R_+, \{I^k_\omega\}\subseteq \mathbb R_+$, defined in \eqref{condBDB}, are bounded below by two constants $C_1$ and $C_2$, respectively,  \emph{i.e.} 
\begin{align}
&\inf\nolimits_k I^k_u\geq C_1>0, \qquad\inf\nolimits_k  I^k_\omega\geq C_2>0.\label{eqassump2}
\end{align}
\end{assump}

\begin{rem}
\label{rem4-2}
{We discuss the above assumption for standard ADMM by setting $M^k_1=0,  M_2^k=0$ in Algorithm \ref{algADMM}, since it can work  well in practice (further details can be found in  Section~\ref{sec5}).}  Assuming that the iterative sequences $\{\omega^k\}$ and $\{u^k\}$ converge to the ground truth solutions $\omega^\star$ and $u^\star$ for \eqref{eqModel}, which are also assumed to be nonzeros,  under Assumption \ref{assump2},  the following inequalities hold:
\begin{equation}\label{eqAA}
\begin{split}
\min_{0\leq t_1\leq \bar m-1}\sum\nolimits_j |(\mathcal S_j u^\star)(t_1)|^2\geq C_1,\qquad
\min_{0\leq t_2\leq n-1}\sum\nolimits_j |(\mathcal S^T_j \omega^\star)(t_2)|^2\geq C_2,
\end{split}
\end{equation}
which is a necessary condition for Assumption \ref{assump2}.
Equation~\eqref{eqAA} holds when the sample is scanned with a small enough sliding distance so that each scan position sufficiently overlaps with the next.
There must exist a positive number $k_0\geq 0$, such that for all $k\geq k_0$ the iterative sequences $\{I_u^k\}_{k\geq k_0}$ and $\{I_\omega^k\}_{k\geq k_0}$ are bounded below by some strictly positive constants. That explains the reasonability of Assumption \ref{assump2}. In practice, in order to guarantee the convergence for Ptycho-PR, scanning with smaller enough sliding distances is always needed so that enough redundancy between measurements can be attained.
However, such assumption for standard ADMM is difficult to verify.
\end{rem}

{ Due to the introduction of the proximal terms,
these  matrices can also be given  by the following strategy:
\begin{equation}
\label{eqM1}
\mathrm{diag}(M_1^k)(t_1)=
\left\{
\begin{split}
&s_1,\qquad\text{if~}h_1^k\leq s_1\\
&{r_{1}h_1^k},\quad\text{otherwise,}
\end{split}
\right.
~~
\mathrm{diag}(M_2^k)(t_2)=
\left\{
\begin{split}
&s_2, \qquad\text{if~}h_2^k\leq s_2\\
&r_{2}{h_2^k}, \quad\text{otherwise,}
\end{split}
\right.
\end{equation}
where
$h_1^k:=\big\|\sum\nolimits_j |(\mathcal S_j u^{k})|^2\big\|_\infty,$
$h_2^k:=\big\|\sum\nolimits_j |(\mathcal S^T_j \omega^{k+1})|^2\big\|_\infty,$
$s_1, s_2$ are  positive safeguard parameters, and $r_{1}, r_{2}$ are  positive constants. 
 One readily knows that  Assumption \ref{assump2} holds with
$C_j=\tfrac{2s_j\alpha_j}{\beta}\min\left\{1,r_{j}\right\}>0, j=1,2$ in \eqref{eqassump2}. This strategy also guarantees that \eqref{eqM1-Cond} and \eqref{eqM2-Cond} hold, and the sequences of these two preconditioning matrices are bounded due to the boundednesses of  $\{\omega^k\}$ and $\{u^k\}$.
}

\vskip .05in

%

The relative error condition, similarly to \eqref{condRE}, is derived in the following lemma:
\begin{lem}[Relative error condition]\label{lem3}
By letting $\beta>\tfrac{3+\sqrt{17}}{2}L$
 {and letting $\{\mathrm{diag}(M_1^k)\}, \{\mathrm{diag}(M_2^k)\}$ be bounded,} we have:
\begin{equation*}
{\mathrm{dist}(0,\partial \Upsilon_\beta (X^{k+1}))}\leq
C_3\|X^{k+1}-X^k\|~\forall  k\geq 0,
\end{equation*}
with a positive constant $C_3$.
\end{lem}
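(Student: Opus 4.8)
The plan is to estimate $\mathrm{dist}(0,\partial\Upsilon_\beta(X^{k+1}))$ by exhibiting an explicit element of the subdifferential $\partial\Upsilon_\beta(X^{k+1})$ whose norm is controlled by the successive errors $\|E_\omega^{k+1}\|,\|E_u^{k+1}\|,\|E_z^{k+1}\|,\|E_\Lambda^{k+1}\|$, and then to absorb $\|E_\Lambda^{k+1}\|$ into the other three errors. First I would compute the four blocks of $\partial\Upsilon_\beta(X^{k+1})$ — namely the partial subdifferentials with respect to $\omega$, $u$, $z$ and the (smooth) gradient with respect to $\Lambda$ — evaluated at the new iterate $X^{k+1}=(\omega^{k+1},u^{k+1},z^{k+1},\Lambda^{k+1})$. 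The $z$-block is the cleanest: by the optimality condition \eqref{eqOptC-3-2} we have $\nabla_z\Upsilon_\beta(X^{k+1})=\nabla\mathcal G(z^{k+1})+\Lambda^{k+1}+\beta(z^{k+1}-\mathcal A(\omega^{k+1},u^{k+1}))=\beta(z^{k+1}-\mathcal A(\omega^{k+1},u^{k+1}))=E_\Lambda^{k+1}$, using Step 4 of \eqref{eqADMM}. The $\Lambda$-block is exactly $z^{k+1}-\mathcal A(\omega^{k+1},u^{k+1})=\tfrac1\beta E_\Lambda^{k+1}$.

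Next I would handle the $\omega$- and $u$-blocks, which are the ones carrying the ``staleness'' error coming from the fact that the subproblems in Steps~1 and 2 of \eqref{eqADMM} were solved at $(u^k,z^k,\Lambda^k)$ and $(z^k,\Lambda^k)$ rather than at the final iterate. Starting from the optimality conditions \eqref{eqOptC-1} and \eqref{eqOptC-2}, I would subtract off the corresponding terms of $\partial_\omega\Upsilon_\beta(X^{k+1})$ and $\partial_u\Upsilon_\beta(X^{k+1})$. For the $\omega$-block the discrepancy involves: the proximal term $\tfrac{\alpha_1}{\beta}\mathrm{diag}(M_1^k)\circ E_\omega^{k+1}$, which is bounded by $\tfrac{\alpha_1}{\beta}\sup_k\|\mathrm{diag}(M_1^k)\|_\infty\,\|E_\omega^{k+1}\|$ using the assumed boundedness of $\{\mathrm{diag}(M_1^k)\}$; the replacement of $z^k+\tfrac1\beta\Lambda^k$ by $z^{k+1}+\tfrac1\beta\Lambda^{k+1}$, which produces terms bounded by $C\,\|E_z^{k+1}\|$ and $C\,\|E_\Lambda^{k+1}\|$ after applying \eqref{eq3} and the boundedness of $\{u^k\}$, $\{\omega^{k+1}\}$ (Lemma~\ref{lem2}) together with the fact that $\mathcal S_j$, $\mathcal F^{-1}$ have bounded operator norms; and possibly a term from replacing $\mathcal S_j u^k$ by $\mathcal S_j u^{k+1}$ in the quadratic coefficient, which is $O(\|E_u^{k+1}\|)$ again by boundedness. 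An analogous bookkeeping handles the $u$-block, now with a cross-term $O(\|E_\omega^{k+1}\|)$ arising because $u^{k+1}$ was computed using $\omega^{k+1}$ which already matches the final iterate, so that block is slightly simpler. Collecting all four blocks gives $\mathrm{dist}(0,\partial\Upsilon_\beta(X^{k+1}))\le C\big(\|E_\omega^{k+1}\|+\|E_u^{k+1}\|+\|E_z^{k+1}\|+\|E_\Lambda^{k+1}\|\big)$ for some constant $C$ depending on $\beta,\alpha_1,\alpha_2$, the sup norms of the preconditioning matrices, the bounds $C_\omega,C_u$ and the bounds on $\{\omega^k\},\{u^k\}$ from Lemma~\ref{lem2}.

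Finally, to convert this into the stated bound with $\|X^{k+1}-X^k\|$ on the right-hand side, I would eliminate $\|E_\Lambda^{k+1}\|$. From \eqref{eqOptC-3-2} we get $E_\Lambda^{k+1}=\Lambda^{k+1}-\Lambda^k=-\big(\nabla\mathcal G(z^{k+1})-\nabla\mathcal G(z^k)\big)$, so by the Lipschitz property of $\mathcal G$ (Lemma~\ref{assump1}) with constant $L$, $\|E_\Lambda^{k+1}\|\le L\|E_z^{k+1}\|$. Substituting, $\mathrm{dist}(0,\partial\Upsilon_\beta(X^{k+1}))\le C'\big(\|E_\omega^{k+1}\|+\|E_u^{k+1}\|+\|E_z^{k+1}\|\big)\le C_3\|X^{k+1}-X^k\|$, since $\|X^{k+1}-X^k\|^2=\|E_\omega^{k+1}\|^2+\|E_u^{k+1}\|^2+\|E_z^{k+1}\|^2+\|E_\Lambda^{k+1}\|^2$ dominates each summand. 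The condition $\beta>\tfrac{3+\sqrt{17}}{2}L$ is used only through Lemma~\ref{lem2} to guarantee boundedness of $\{X^k\}$, which in turn makes all the constants finite. The main obstacle will be the careful accounting in the $\omega$- and $u$-blocks: correctly tracking which terms in \eqref{eqOptC-1}–\eqref{eqOptC-2} were evaluated at stale versus current iterates, and verifying that every factor multiplying a successive-error term is uniformly bounded — this is where boundedness of the iterates (Lemma~\ref{lem2}), boundedness of the preconditioning matrices (as assumed, e.g.\ via \eqref{eqM1}), and the elementary inequality \eqref{eq3} are all needed simultaneously. The routine parts — the $z$- and $\Lambda$-blocks and the final substitution — are immediate.
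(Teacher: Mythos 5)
Your proposal is correct and follows essentially the same route as the paper's proof in Appendix~D: exhibit explicit subgradients block by block from the optimality conditions \eqref{eqOptC-1}--\eqref{eqOptC-3}, bound the staleness in the $\omega$- and $u$-blocks using \eqref{eq3} and the boundedness of the iterates and preconditioning matrices, and note that the $z$- and $\Lambda$-blocks reduce to multiples of $E_\Lambda^{k+1}$. Your final step of absorbing $\|E_\Lambda^{k+1}\|$ via $\|E_\Lambda^{k+1}\|\leq L\|E_z^{k+1}\|$ is harmless but unnecessary, since $\|E_\Lambda^{k+1}\|\leq\|X^{k+1}-X^k\|$ already; the paper simply keeps that term.
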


The corresponding proof can be found in Appendix \ref{apdx1-4}.
At this point, the convergence of Algorithm~\ref{algADMM} can be derived in the following theorem:
\begin{thm}\label{thm2}
Under Assumption \ref{assump2} and  $\beta>4L$, {and letting $\{\mathrm{diag}(M_1^k)\},$ $\{\mathrm{diag}(M_2^k)\}$ be bounded,}
 the iterative sequence $\{X^k\}$ generated by Algorithm \ref{algADMM}  converges to a stationary point $X^\star$ of the augmented Lagrangian $\Upsilon_\beta$ in \eqref{eqAL} for Model I in \eqref{eqModel} with wIGM,
 \emph{i.e.}
 $
 \lim_{k\rightarrow +\infty} X^k=X^\star,  \mbox{with~} {0\in \partial \Upsilon_\beta(X^\star)}.
 $
\end{thm}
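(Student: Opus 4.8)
The plan is to invoke the standard three-ingredient scheme for convergence of first-order methods on nonconvex problems (sufficient decrease, relative error, KL property), most of which has already been assembled in the excerpt. First I would note that Lemma~\ref{lem2} already gives that $\{X^k\}$ is bounded and that $\{\Upsilon_\beta(X^k)\}$ is non-increasing and bounded below, hence convergent; since the telescoped sum of the right-hand side of \eqref{eqlem1} is finite, and under Assumption~\ref{assump2} together with $\beta>4L$ the coefficients $\tfrac\beta2 I_u^k$, $\tfrac\beta2 I_\omega^k$ and $\tfrac{\beta-3L}{2}-\tfrac{L^2}{\beta}$ are all bounded below by a common positive constant, I would conclude $\sum_k\big(\|E_\omega^{k+1}\|^2+\|E_u^{k+1}\|^2+\|E_z^{k+1}\|^2\big)<\infty$, so in particular $\|E_\omega^{k+1}\|,\|E_u^{k+1}\|,\|E_z^{k+1}\|\to0$. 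Combining with \eqref{eqOptC-3-2} and Step~4, $\|E_\Lambda^{k+1}\|=\|\nabla\mathcal G(z^{k+1})-\nabla\mathcal G(z^k)\|\le L\|E_z^{k+1}\|\to0$, so $\|X^{k+1}-X^k\|\to0$.

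Next I would upgrade this to full convergence. Lemma~\ref{lem3} supplies the relative error bound $\mathrm{dist}(0,\partial\Upsilon_\beta(X^{k+1}))\le C_3\|X^{k+1}-X^k\|$ (valid since the preconditioning matrices are assumed bounded; the strategy \eqref{eqM1} guarantees this). Now I would argue that $\Upsilon_\beta$ restricted to the relevant coordinates is a KL function: $\mathcal G$ (for either metric) is real-analytic on $\mathbb C^m$, $\|z-\mathcal A(\omega,u)\|^2$ and $\Re\langle z-\mathcal A(\omega,u),\Lambda\rangle$ are polynomial in the real and imaginary parts, and $\mathbb I_{\mathscr X_1},\mathbb I_{\mathscr X_2}$ are indicators of semi-algebraic sets; hence $\Upsilon_\beta$ is semi-algebraic (or at least belongs to the o-minimal/log-exp class in the pAGM/pIPM cases), so it satisfies the KL inequality at every point. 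Then the now-classical argument of Attouch--Bolte--Svaiter / Bolte--Sabach--Teboulle applies: pick any limiting point $X^\star$ of the bounded sequence $\{X^k\}$ (exists by Bolzano--Weierstrass), show $\Upsilon_\beta(X^k)\downarrow\Upsilon_\beta(X^\star)$ using lower semicontinuity plus the decrease property, and use the KL inequality around $X^\star$ together with the sufficient decrease and relative error bounds to prove $\sum_k\|X^{k+1}-X^k\|<\infty$. A finite-length (Cauchy) sequence converges, so $X^k\to X^\star$, and by closedness of the limiting subdifferential (or directly from Lemma~\ref{lem3} and $\|X^{k+1}-X^k\|\to0$) we get $0\in\partial\Upsilon_\beta(X^\star)$, i.e.\ $X^\star$ is a stationary point.

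The main obstacle I anticipate is verifying the KL property cleanly for the augmented Lagrangian in the pIPM and pAGM cases, since the $\log$ in pIPM and the composition of square roots in pAGM make $\mathcal G$ non-semi-algebraic; one must instead appeal to the fact that these functions are definable in the o-minimal structure $\mathbb R_{\mathrm{an,exp}}$ (real-analytic restricted functions together with $\exp$), which still guarantees the KL inequality. A second, more mechanical obstacle is bookkeeping the constants: one needs $\beta>4L$ (stronger than the $\tfrac{3+\sqrt{17}}{2}L$ of Lemma~\ref{lem2}) precisely so that $\tfrac{\beta-3L}{2}-\tfrac{L^2}{\beta}>0$ and so that, after absorbing $\|E_\Lambda^{k+1}\|\le L\|E_z^{k+1}\|$, the descent quantity still dominates $\|X^{k+1}-X^k\|^2$ up to a positive constant; I would carry the inequality $\|X^{k+1}-X^k\|^2\le \|E_\omega^{k+1}\|^2+\|E_u^{k+1}\|^2+(1+L^2)\|E_z^{k+1}\|^2$ through and check the resulting coefficient is positive under $\beta>4L$ and Assumption~\ref{assump2}. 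Everything else is a routine instantiation of the abstract convergence theorem, so I would state it as a lemma (sufficient decrease in $\|X^{k+1}-X^k\|^2$), then quote the abstract result and conclude.
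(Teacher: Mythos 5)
Your proposal is correct and follows essentially the same route as the paper: combine the sufficient decrease of Lemma~\ref{lem1} with Assumption~\ref{assump2} and the bound $\|E_\Lambda^{k+1}\|\le L\|E_z^{k+1}\|$ to get $\Upsilon_\beta(X^k)-\Upsilon_\beta(X^{k+1})\ge C_4\|X^{k+1}-X^k\|^2$ under $\beta>4L$, then invoke Lemma~\ref{lem3}, the KL property of $\Upsilon_\beta$, and the standard Attouch--Bolte-type finite-length argument. The only small divergence is in verifying KL: the paper treats pAGM as semi-algebraic (which it is, since square roots of polynomials are semi-algebraic, contrary to your worry) and handles the $\log$ in pIPM via sub-analyticity of a real-analytic plus semi-algebraic sum, whereas you propose definability in $\mathbb R_{\mathrm{an,exp}}$ --- both are valid ways to obtain the KL inequality.
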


The proof can be found in Appendix \ref{apdx1-5}.
 We remark that Assumption \ref{assump2} can be verified when setting the preconditioning matrices as in \eqref{eqM1}, as well as the boundedness of the sequences of preconditioning matrices.
Therefore, with suitable stepsizes and careful selection of the preconditioning matrices,  one can derive the convergence of the proposed algorithms for Model I.

%

%

\subsection{Convergence of Algorithm \ref{algADMM-II}}\label{sec4-2}
Similarly to Assumption~\ref{assump2}:
\begin{assump}[Sufficient-Overlap]\label{assumpII-2}
There exists a constant $\widehat C_1$ independent of $k$, such that $I_\omega^k$ defined in \eqref{condBDB} is bounded below, \emph{i.e.}
$I^k_\omega\geq \widehat C_1>0.$
\end{assump}

Based on the above two assumptions, we
list the theorem below without proof, since it directly follows the proof of Theorem \ref{thm2}.
\begin{thm}\label{thm3}
Under Assumptions \ref{assumpII-2}, {and letting $\{\mathrm{diag}(\bar M_1^k)\},$ $\{\mathrm{diag}(\bar M_2^k)\}$ be bounded,} the iterative sequence  generated by Algorithm \ref{algADMM-II} converges to a stationary point of the augmented Lagrangian $\widehat \Upsilon_{\beta_1,\beta_2}$ in \eqref{eqALII}, provided that the parameters $\beta_1, \beta_2$ are sufficiently large.
 \end{thm}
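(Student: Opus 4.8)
The plan is to mirror, step by step, the convergence analysis already developed for Algorithm \ref{algADMM} in Section \ref{sec4-1}, transporting each lemma to the augmented Lagrangian $\widehat\Upsilon_{\beta_1,\beta_2}$ of \eqref{eqALII}. First I would write down the first-order optimality conditions for the four subproblems (the $\omega$-, $u$-, $z_1$-, $z_2$-updates) exactly as in \eqref{eqOptC-1}--\eqref{eqOptC-3}; the only genuinely new ingredient is the $z_2$-subproblem, whose optimality condition reads $0=\tau\nabla\widehat{\mathcal G}(z_2^{k+1})+\Lambda_2^{k+1}$, the analogue of \eqref{eqOptC-3-2}. Since $\widehat{\mathcal G}(z):=\mathcal B(|z|^2,\bm c^2)$ has exactly the same form as $\mathcal G$, Lemma \ref{assump1} applies verbatim and gives a Lipschitz constant $\widehat L$ for $\widehat{\mathcal G}$; from \eqref{eqOptC-3-2}-type identities both multiplier increments $E_{\Lambda_1}^{k+1}$ and $E_{\Lambda_2}^{k+1}$ are controlled by $\|E_{z_1}^{k+1}\|$ and $\|E_{z_2}^{k+1}\|$ respectively, which is what makes the whole argument go through.

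Next I would establish the three structural conditions \textbf{i)}--\textbf{iii)} from the start of Section \ref{sec4}. For the sufficient-decrease condition I would reproduce Lemma \ref{lem1}: the $\omega$-step now sees an extra strongly-convex term $\tfrac{\beta_2}{2}\|z_2^k+\tfrac{\Lambda_2^k}{\beta_2}-\mathcal F\omega\|^2$, which only improves the descent (it contributes an extra $\tfrac{\beta_2}{2}\|E_\omega^{k+1}\|^2$ because $\mathcal F$ is unitary), while the $z_1$- and $z_2$-steps each contribute a term of the form $(\tfrac{\beta_i-3L_i}{2}-\tfrac{L_i^2}{\beta_i})\|E_{z_i}^{k+1}\|^2$ with $L_1=L$, $L_2=\tau\widehat L$. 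Then boundedness of $\{\widehat X^k\}$ (the analogue of Lemma \ref{lem2}) follows once $\beta_1,\beta_2$ are large enough: the probe iterates $\{\omega^k\}$ and image iterates $\{u^k\}$ are bounded by the constraints $\mathscr X_1,\mathscr X_2$, the multipliers are bounded via the identities tying $\Lambda_i^{k+1}$ to $\nabla\mathcal G(z_1^{k+1})$ and $\tau\nabla\widehat{\mathcal G}(z_2^{k+1})$ on the bounded sets, and then $z_1^k,z_2^k$ are bounded from the augmented Lagrangian being bounded and non-increasing. Assumption \ref{assumpII-2} (only $I_\omega^k\ge\widehat C_1>0$ is needed now, since the $\beta_2$-term already furnishes coercivity in $\omega$) then upgrades the right-hand side of the sufficient-decrease inequality to $c\,\|\widehat X^{k+1}-\widehat X^k\|^2$. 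The relative-error condition is the analogue of Lemma \ref{lem3}: compute an element of $\partial\widehat\Upsilon_{\beta_1,\beta_2}(\widehat X^{k+1})$ from the optimality conditions, and bound each component by $\|\widehat X^{k+1}-\widehat X^k\|$ using the Lipschitz continuity of $\nabla\mathcal G$, $\nabla\widehat{\mathcal G}$, the boundedness of $\{\mathrm{diag}(\bar M_1^k)\},\{\mathrm{diag}(\bar M_2^k)\}$ and of the iterates. Finally, the KL property holds because $\widehat\Upsilon_{\beta_1,\beta_2}$ is a sum of a real-analytic (indeed $C^\infty$ semialgebraic after the real/imaginary splitting) part and the indicators of the semialgebraic sets $\mathscr X_1,\mathscr X_2$; invoking the abstract convergence result recalled in Section \ref{sec4} yields $\widehat X^k\to\widehat X^\star$ with $0\in\partial\widehat\Upsilon_{\beta_1,\beta_2}(\widehat X^\star)$.

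The main obstacle is the bilinear constraint $z_1=\mathcal A(\omega,u)$, exactly as in Theorem \ref{thm2}: the descent estimate in the $\omega$- and $u$-directions is only as strong as $\tfrac{\beta_1}{2}I_u^k\|E_\omega^{k+1}\|^2+\tfrac{\beta_1}{2}I_\omega^k\|E_u^{k+1}\|^2$, so one cannot get coercivity in $u$ without the sufficient-overlap lower bound $I_\omega^k\ge\widehat C_1$, and one cannot close the argument without it. The saving grace relative to Algorithm \ref{algADMM} is that the extra Fourier-magnitude term gives unconditional coercivity in $\omega$, which is why Assumption \ref{assumpII-2} needs only the single bound on $I_\omega^k$. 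Apart from that, everything is a routine transcription of the Appendix \ref{apdx1-2}--\ref{apdx1-5} computations with the bookkeeping doubled to track the second block $(z_2,\Lambda_2)$, which is why we state the theorem without a separate proof.
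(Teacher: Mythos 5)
Your plan is correct and is essentially the proof the paper intends: the paper omits the argument precisely because it "directly follows the proof of Theorem \ref{thm2}," and your transcription of the optimality conditions, sufficient decrease, boundedness, relative error, and KL steps to the two-block setting $(z_1,\Lambda_1,z_2,\Lambda_2)$ is the right bookkeeping. You also correctly pinpoint the one structurally new point — the unitary term $\tfrac{\beta_2}{2}\|z_2^k+\tfrac{\Lambda_2^k}{\beta_2}-\mathcal F\omega\|^2$ supplies coercivity in $\omega$ unconditionally, which is exactly why Assumption \ref{assumpII-2} only needs the lower bound on $I_\omega^k$.
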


In order to derive the convergence of Algorithm \ref{algADMM-II}, only the positive lower bound of $I_\omega^k$ is assumed. Hence,  the convergence analysis of Algorithm \ref{algADMM-II} needs  weaker conditions, compared to  Algorithm \ref{algADMM}. For the convergence guarantee of Algorithm \ref{algADMM-II}, the boundedness of $I_\omega^k$  can be verified after the careful selection of the preconditioning matrix $\bar M_2^k$, similarly to \eqref{eqM1}.

%

%

\vskip .2in
\section{Numerical Experiments}\label{sec5}

In the following experiments we consider that the probe scans the image of interest with periodical boundary condition on square, hexagonal or random lattices. The random lattice is generated based on the square lattice with additional random offsets within one pixel. {The ground truth image and probe employed in this experiments can be found in Figure \ref{fig3} (a) and Figure \ref{fig12-0}.}

In order to evaluate performances of different algorithms, we introduce two criteria. First, the R-factor
for Algorithms \ref{algADMM} and \ref{algADMM-II}, defined
as:
\[
\text{R-factor}^k:= \tfrac{\sum\nolimits_j\big\||\mathcal A_j (\omega^k, u^k)|-\sqrt {f_j} \big\|_1}{\|\sqrt{f}\|_1}.\!
\]
If the R-factor of the iteration solutions of Algorithm~\ref{algADMM} or \ref{algADMM-II} tends to zero as $k\rightarrow \infty$ for the noiseless case,  they converge to  global minimizers of the proposed models, which are also   the stationary points. Therefore,  in the case of noiseless data, R-factors can be used to verify the convergence.
Secondly, the SNR (Signal-to-Noise Ratio) of $u^k$ is also employed, denoted as:
 \[
 \mathrm{SNR}(u^k, u_g)=-10\log_{10}{\sum\limits_{t=0}^{n-1}|\zeta^* u^k(t+T^*)-u_g(t)|^2}/{\|\zeta^* u^k\|^2},
 \]
with  the ground truth image $u_g$, where the error is computed up to the translation $T^*$, phase shift and scaling factor $\zeta^*$ determined by
 \[
 (\zeta^*,T^*):=\arg\min\limits_{\zeta\in \mathbb C, T\in \mathbb Z}\sum_{t}|\zeta u^k(t+T)-u_g(t)|^2.
 \]
 Similarly, we can denote  the $\mathrm{SNR}(\omega^k, \omega_g)$ for the probe, with $\omega_g$ being the ground truth.
We also consider measurements contaminated by Poisson noise. {In order to simulate different peak values,  a factor $\eta>0$ is introduced \cite{chang2016general} on the image, such that the Poisson noise corrupts the clean intensity as $f(t)\stackrel{\mathrm {i.d.d.}}{\sim} \mathrm{Poisson} ((\bm a^\eta)^2(t))$, where $\bm a^\eta:=|\mathcal A(\omega_g,\eta u_g)|$ with ground truth $\omega_g$ and $u_g.$
One readily knows that noise level gets higher if $\eta$ gets smaller. For the noisy measurements,  $\mathrm{SNR}_{intensity}$ \cite{chang2016Total} is used to measure the noise level, which is denoted below:
\[
\mathrm{SNR}_{intensity}:=-10\log_{10}{\| f-(\bm a^\eta)^2}\|^2/{\|(\bm a^\eta)^2\|^2}.
\]
}

In the following tests, we set $u^0=\bm 1_{n}$ and $\omega^0=\tfrac{1}{J}\mathcal F^{-1}(\sum_j\sqrt{f_j})$, where the initial guess of the probe has zero phase for Algorithm \ref{algADMM}-\ref{algADMM-II}.
{The parameters $C_\omega,$ $C_u$ need to be sufficiently large, and we employ $C_\omega=C_u=1.0\times 10^8$ for the constraint sets in Models I and II.} The same initial values of $u$ and $\omega$ are used for other compared algorithms. Other variables including auxiliary variables and multipliers  are initialized as in the initialization stage in  Algorithms \ref{algADMM}-\ref{algADMM-II}.
The parameters for the proposed algorithms are problem-dependent, which will be specified for different experiments.
The codes are implemented in \textsc{Matlab}.

{ We remark that no obvious acceleration of the proposed algorithms can be observed when using more inner iterations for the subproblems of $z^k$ in Algorithm \ref{algADMM}  and $z^k_1, z_2^k$ in Algorithm \ref{algADMM-II}. Due to the page limit, we do not report further details. Therefore, to reduce the overall computation cost, only one inner iteration is adopted.}
\subsection{Performance of Algorithm \ref{algADMM}}
We show the performance of Algorithm \ref{algADMM} with pAGM ($\varepsilon=1.0\times 10^{-8}\times\|f\|_\infty$), and then  compare it with the state-of-the-art algorithms: DR \cite{thibault2009probe}, ePIE \cite{maiden2009improved}  and  PALM \cite{hesse2015proximal}. The comparison algorithms are tuned and executed with the parameters that maximize their performance.

\subsubsection{Comparison with  DR, ePIE  and PALM}
We conduct experiments in the case of both noiseless and noisy measurements.

\textbf{i) Noiseless case.}
We conduct the experiments with two different sliding distances $D=24, 16$.
 Two different scanning lattices including square and random lattices are considered.
 {We show the performances of Algorithm \ref{algADMM} in the following two cases: the standard ADMM with $M_1^k=M_2^k=0$ (referred below as ADMM) and the generalized version \eqref{eqM1} with two additional proximal terms (ADMM-Prox), with
 $r_{1}=1.0\times 10^{-6}, r_{2}=1.0\times 10^{-3}, s_1=s_2=1.0\times 10^{-6}$, and $\alpha_1=\alpha_2=\beta$.
}
{First of all,
in order to verify the sufficient-overlap condition,   we plot  the histories of  $I^k_u$ and $I_\omega^k$ in Figure \ref{fig0} ($D=24$). One readily knows that they quickly become stable, without approaching infinity or zero. Remarkably, without the proximal terms, the iterative sequences $\{I^k_u\}$ and $\{I_\omega^k\}$ by the standard ADMM also quickly tend to non-zero values. With the additional proximal terms, the sequences of ADMM-Prox get stable faster than those of the standard ADMM.
}

\begin{figure}[h!]
\begin{center}
\subfigure[]{\includegraphics[width=.24\textwidth]{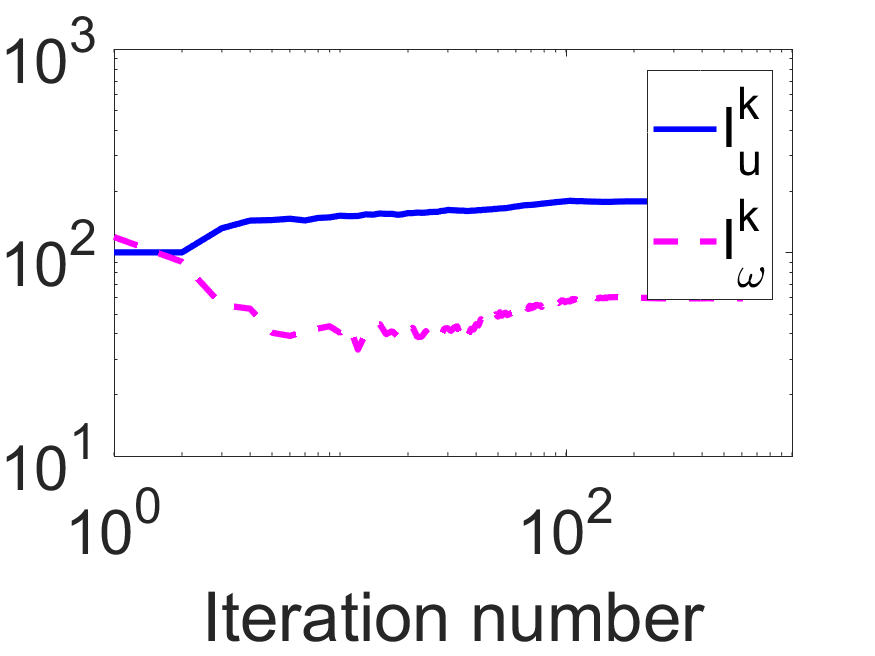}}
\subfigure[]{\includegraphics[width=.24\textwidth]{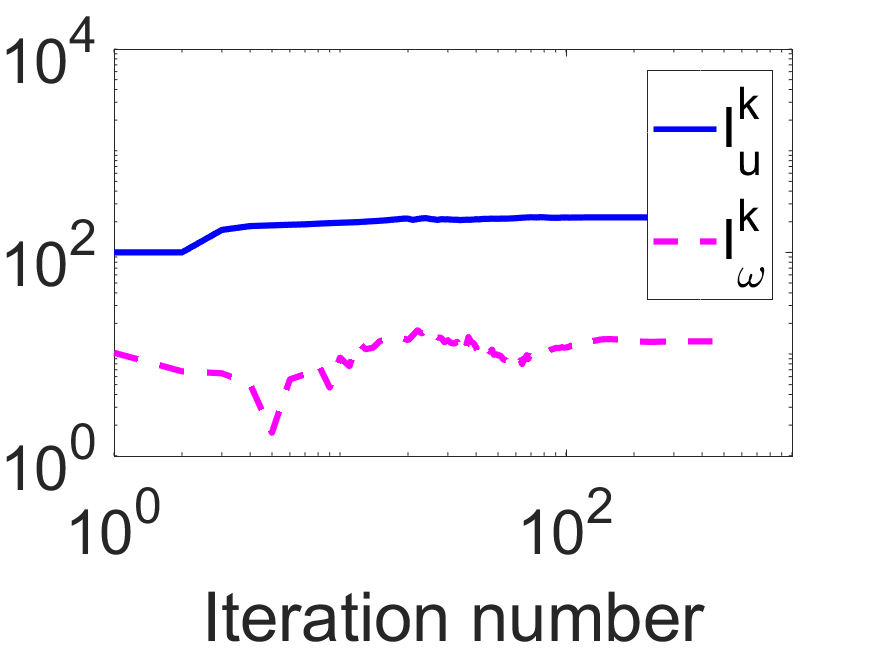}}
\subfigure[]{\includegraphics[width=.24\textwidth]{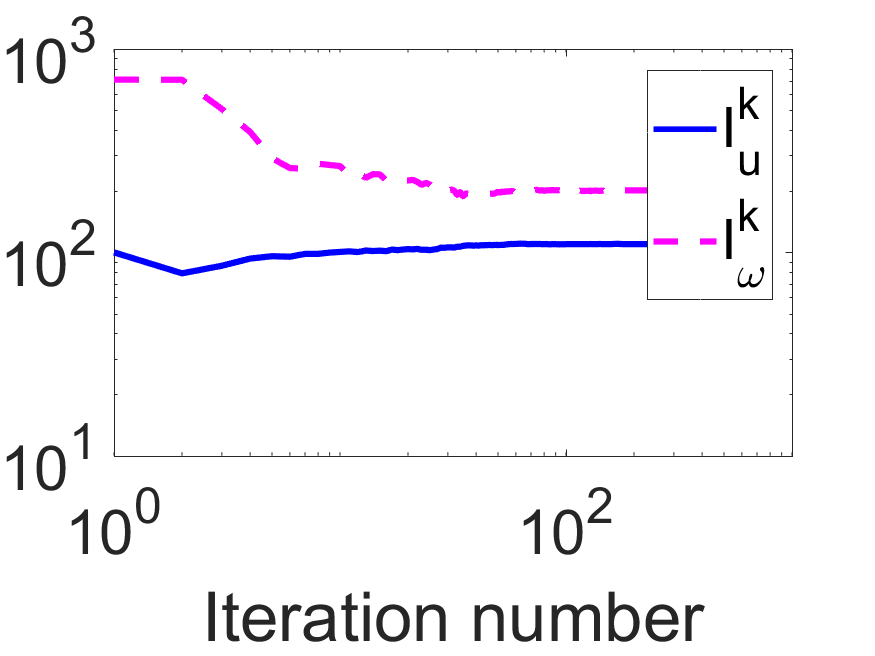}}
\subfigure[]{\includegraphics[width=.24\textwidth]{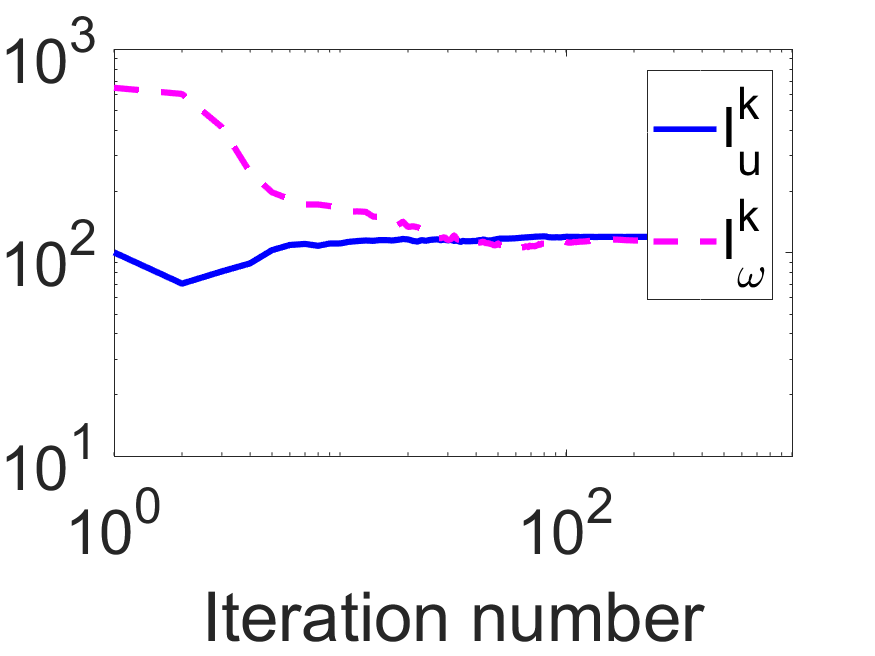}}
\end{center}
\caption{Histories of $I_u^k$ and $I_\omega^k$ for ADMM (a, b) and ADMM-Prox (c, d) scanning on square-lattice (a, c) and random-lattice (b, d). $D=24$. }
\label{fig0}
\end{figure}

\begin{figure}[]
\begin{center}
\subfigure[Truth]{\includegraphics[width=.15\textwidth]{test0_1/3-10-16uG-eps-converted-to.pdf}\includegraphics[width=.04\textwidth]{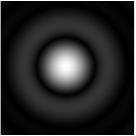}}
\hskip -.02in
\subfigure[DR \cite{thibault2009probe}]{\includegraphics[width=.15\textwidth]{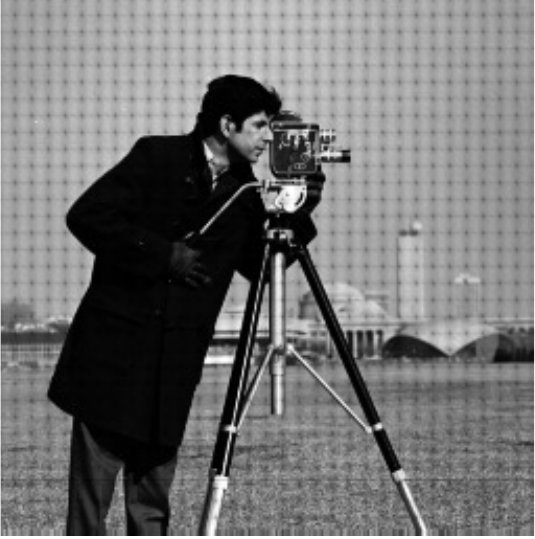}\includegraphics[width=.04\textwidth]{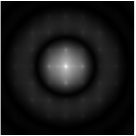}}
\hskip -.02in
\subfigure[PALM  \cite{hesse2015proximal}]{\includegraphics[width=.15\textwidth]{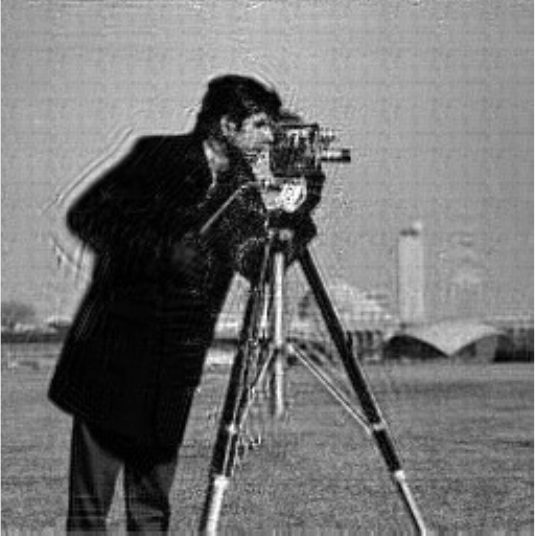}\includegraphics[width=.04\textwidth]{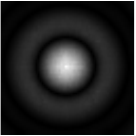}}
\hskip -.02in
\subfigure[ADMM]{\includegraphics[width=.15\textwidth]{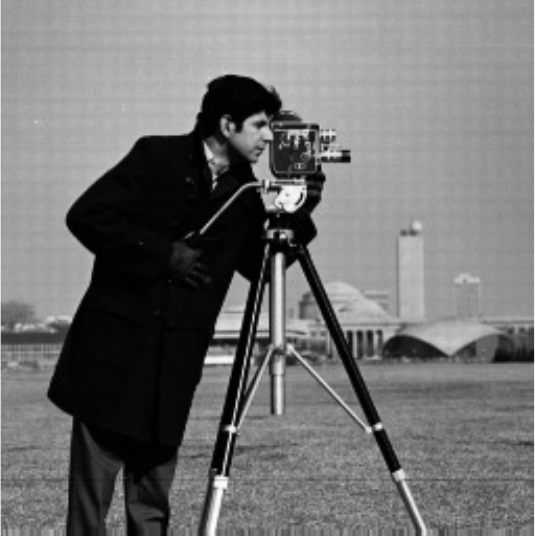}\includegraphics[width=.04\textwidth]{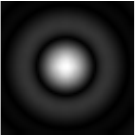}}
\hskip -.02in
\subfigure[ADMM-Prox]{\includegraphics[width=.15\textwidth]{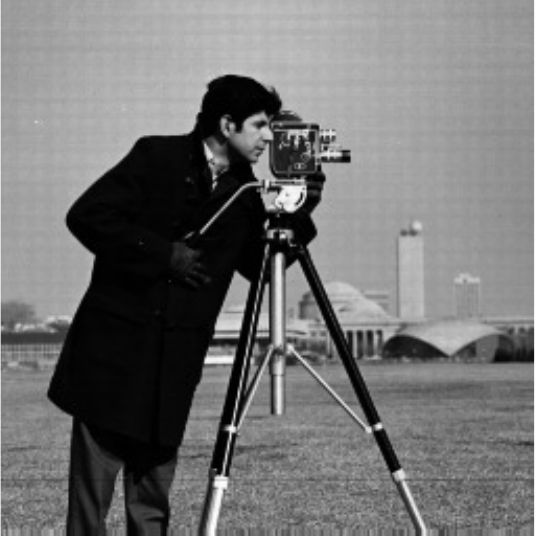}\includegraphics[width=.04\textwidth]{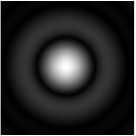}}\\
\hskip .19\textwidth
\subfigure[DR \cite{thibault2009probe}]{\includegraphics[width=.15\textwidth]{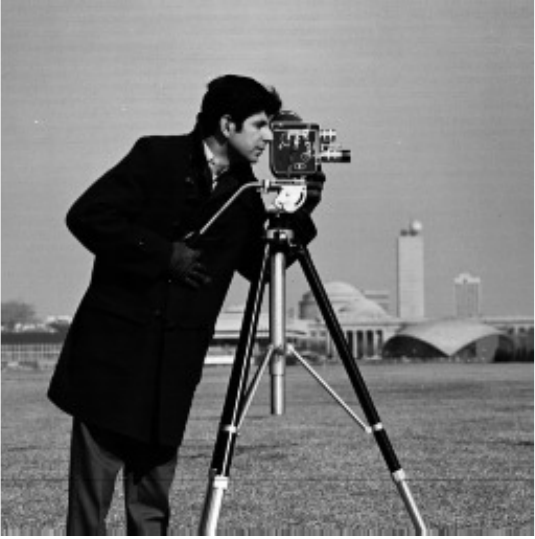}\includegraphics[width=.04\textwidth]{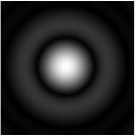}}
\hskip -.02in
\subfigure[PALM  \cite{hesse2015proximal}]{\includegraphics[width=.15\textwidth]{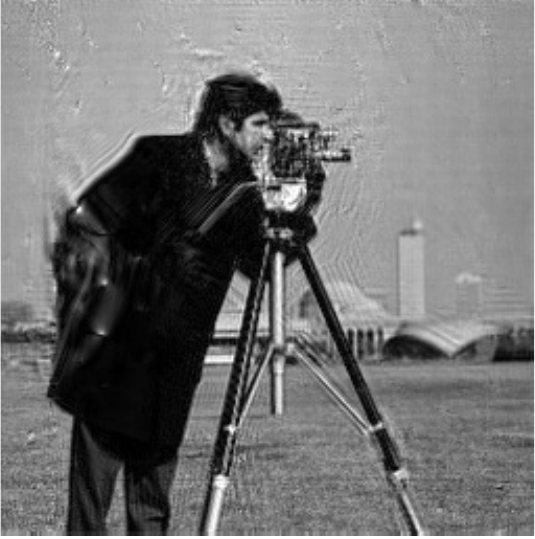}\includegraphics[width=.04\textwidth]{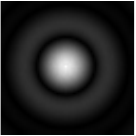}}
\hskip -.02in
\subfigure[ADMM]{\includegraphics[width=.15\textwidth]{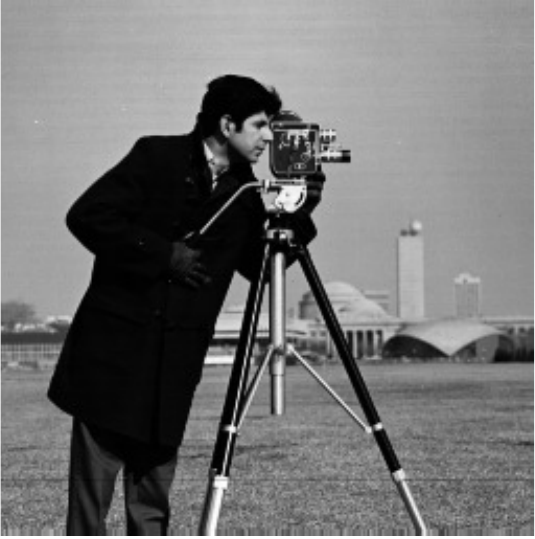}\includegraphics[width=.04\textwidth]{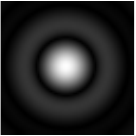}}
\hskip -.02in
\subfigure[ADMM-Prox]{\includegraphics[width=.15\textwidth]{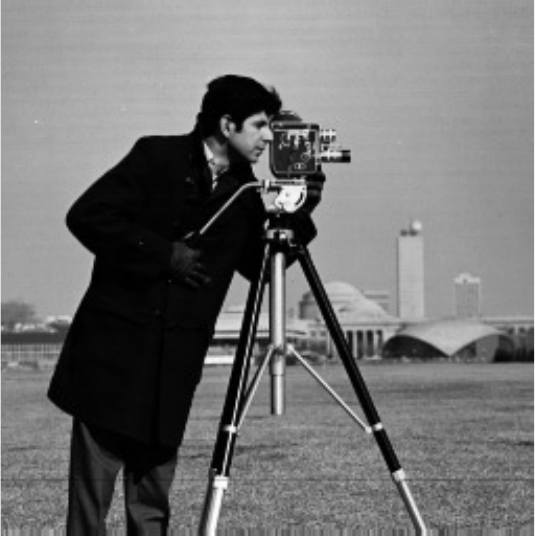}\includegraphics[width=.04\textwidth]{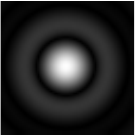}}\\
\subfigure[ePIE \cite{maiden2009improved}]{\includegraphics[width=.15\textwidth]{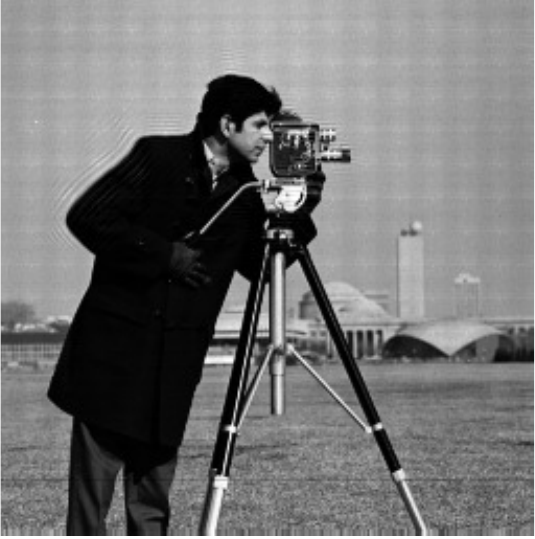}\includegraphics[width=.04\textwidth]{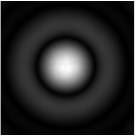}}
\hskip -.02in
\subfigure[DR \cite{thibault2009probe}]{\includegraphics[width=.15\textwidth]{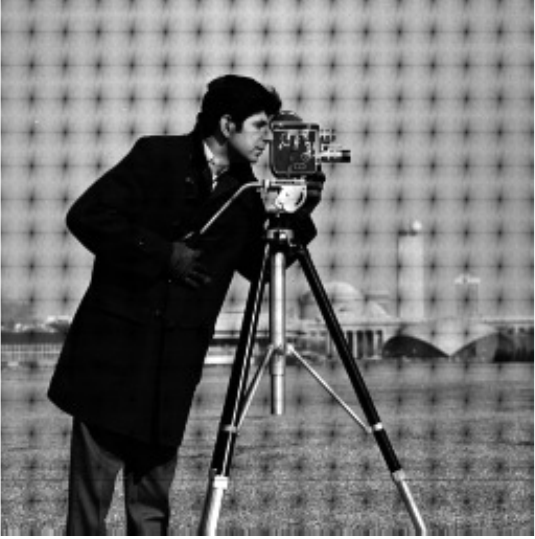}\includegraphics[width=.04\textwidth]{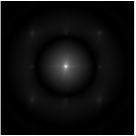}}
\hskip -.02in
\subfigure[PALM  \cite{hesse2015proximal}]{\includegraphics[width=.15\textwidth]{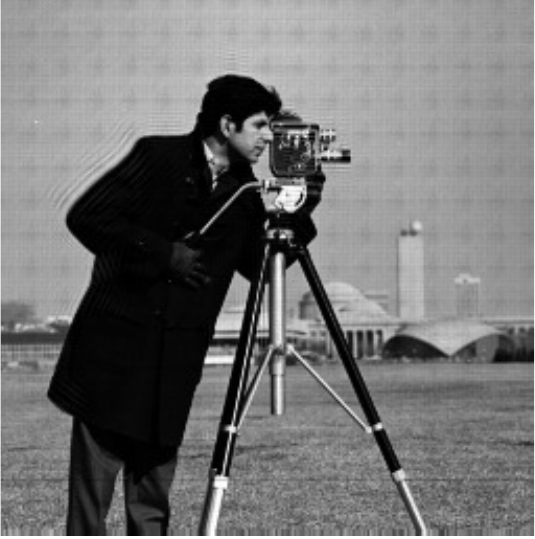}\includegraphics[width=.04\textwidth]{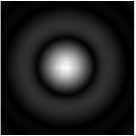}}
\hskip -.02in
\subfigure[ADMM]{\includegraphics[width=.15\textwidth]{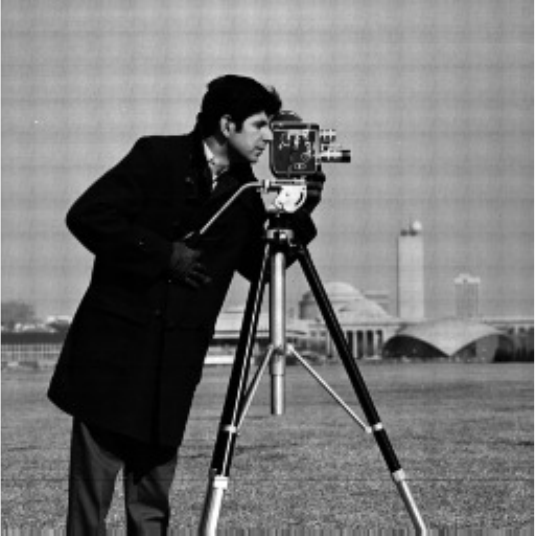}\includegraphics[width=.04\textwidth]{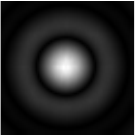}}
\hskip -.02in
\subfigure[ADMM-Prox]{\includegraphics[width=.15\textwidth]{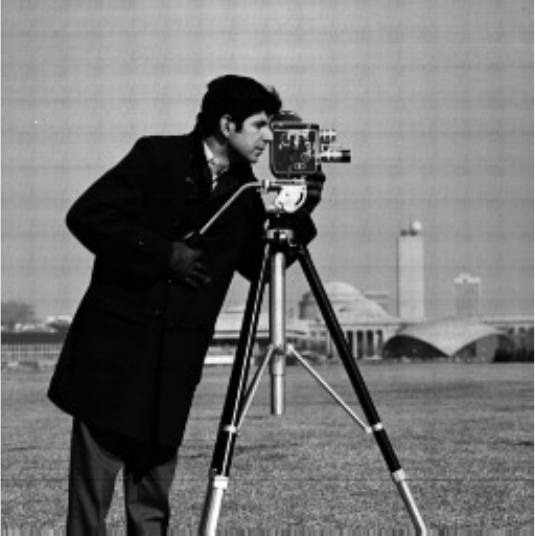}\includegraphics[width=.04\textwidth]{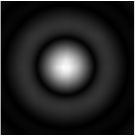}}\\
\subfigure[ePIE \cite{maiden2009improved}]{\includegraphics[width=.15\textwidth]{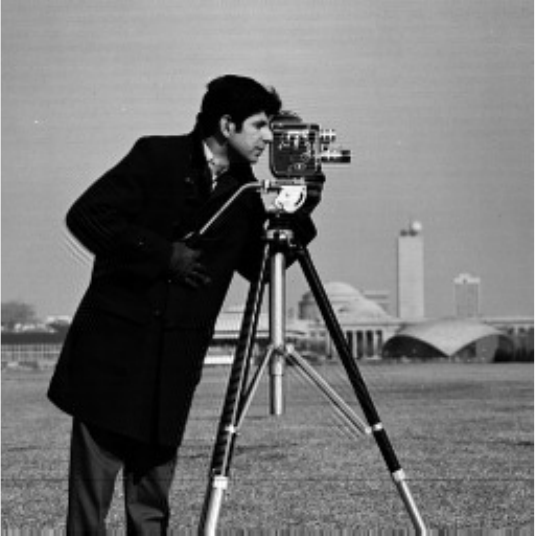}\includegraphics[width=.04\textwidth]{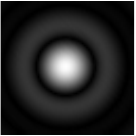}}
\hskip -.02in
\subfigure[DR \cite{thibault2009probe}]{\includegraphics[width=.15\textwidth]{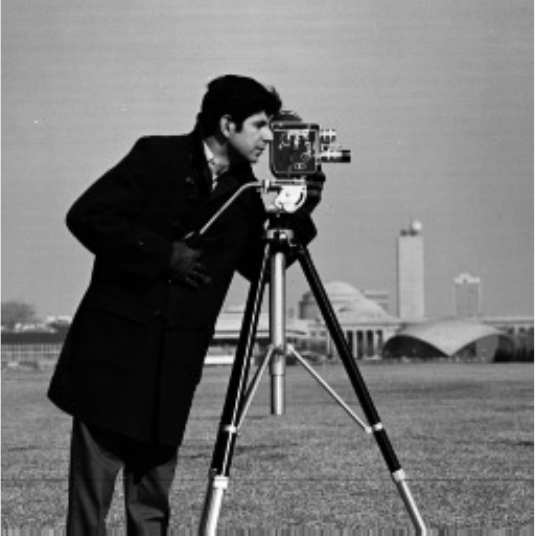}\includegraphics[width=.04\textwidth]{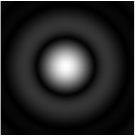}}
\hskip -.02in
\subfigure[PALM  \cite{hesse2015proximal}]{\includegraphics[width=.15\textwidth]{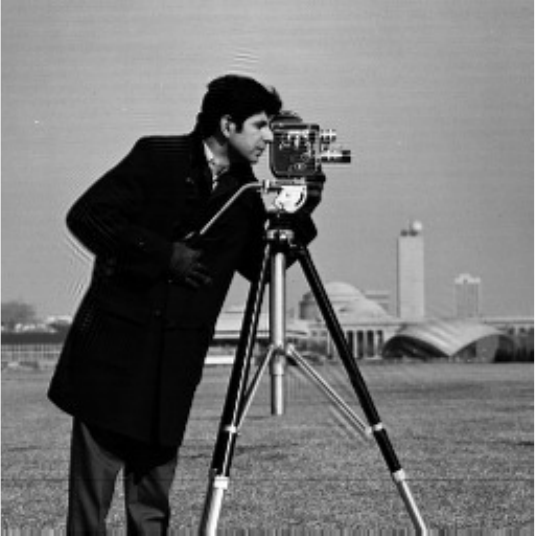}\includegraphics[width=.04\textwidth]{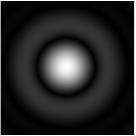}}
\hskip -.02in
\subfigure[ADMM]{\includegraphics[width=.15\textwidth]{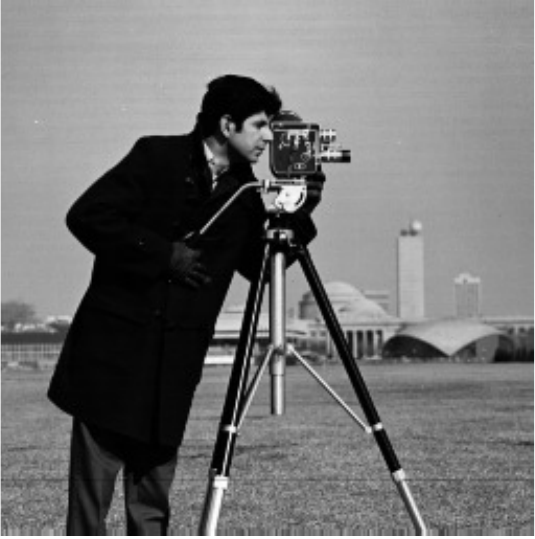}\includegraphics[width=.04\textwidth]{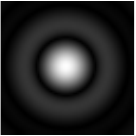}}
\hskip -.02in
\subfigure[ADMM-Prox]{\includegraphics[width=.15\textwidth]{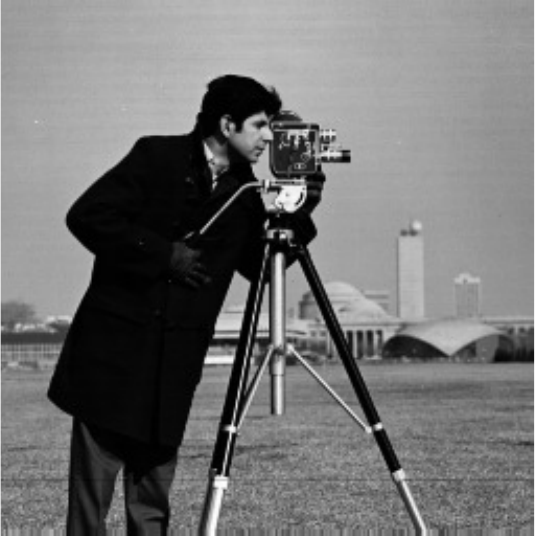}\includegraphics[width=.04\textwidth]{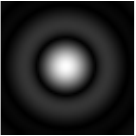}}
\end{center}
\caption{
Results  of recovered images and probes by  ePIE \cite{maiden2009improved}, DR \cite{thibault2009probe},  PALM \cite{hesse2015proximal}  and ADMM (Algorithm \ref{algADMM}) from left to right. D=24 ($m=6.25n$) in the 1st-2nd rows; D=16 ($m=16n$) in 3rd-4th rows. Scanning on  square-lattice  in 1st and 3rd rows and random-lattice in 2nd and 4th rows. (a): Ground truth for the image and probe with resolutions $n=256\times 256$ and $\bar m=64\times 64$ respectively.
}
\vskip -.2in
\label{fig3}
\end{figure}

\begin{figure}[]
\begin{center}
\subfigure[Truth]{\includegraphics[width=.13\textwidth]{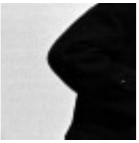}}\qquad
\subfigure[DR \cite{thibault2009probe}]{\includegraphics[width=.13\textwidth]{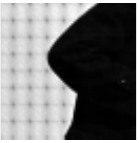}}\qquad
\subfigure[\!\!PALM\!\cite{hesse2015proximal}\!\!]{\includegraphics[width=.13\textwidth]{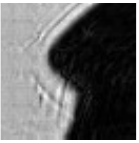}}\qquad
\subfigure[ADMM]{\includegraphics[width=.13\textwidth]{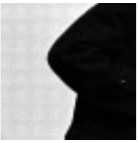}}\qquad
\subfigure[ADMM-Prox]{\includegraphics[width=.13\textwidth]{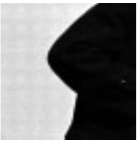}}\\
\hskip .13\textwidth\qquad
\subfigure[DR \cite{thibault2009probe}]{\includegraphics[width=.13\textwidth]{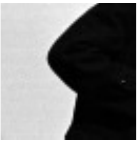}}\qquad
\subfigure[PALM \cite{hesse2015proximal}]{\includegraphics[width=.13\textwidth]{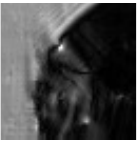}}\qquad
\subfigure[ADMM]{\includegraphics[width=.13\textwidth]{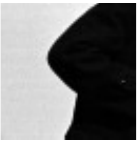}}\qquad
\subfigure[ADMM-Prox]{\includegraphics[width=.13\textwidth]{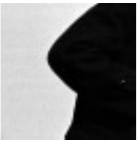}}\\
\subfigure[ePIE \cite{maiden2009improved}]{\includegraphics[width=.13\textwidth]{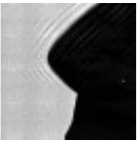}}\qquad
\subfigure[DR \cite{thibault2009probe}]{\includegraphics[width=.13\textwidth]{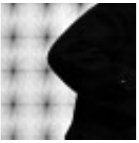}}\qquad
\subfigure[PALM \cite{hesse2015proximal}]{\includegraphics[width=.13\textwidth]{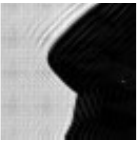}}\qquad
\subfigure[ADMM]{\includegraphics[width=.13\textwidth]{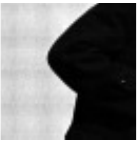}}\qquad
\subfigure[ADMM-Prox]{\includegraphics[width=.13\textwidth]{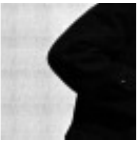}}\\
\subfigure[ePIE \cite{maiden2009improved}]{\includegraphics[width=.13\textwidth]{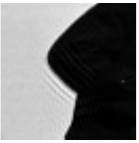}}\qquad
\subfigure[DR \cite{thibault2009probe}]{\includegraphics[width=.13\textwidth]{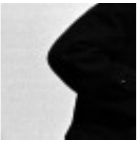}}\qquad
\subfigure[PALM \cite{hesse2015proximal}]{\includegraphics[width=.13\textwidth]{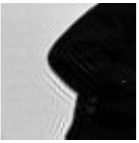}}\qquad
\subfigure[ADMM]{\includegraphics[width=.13\textwidth]{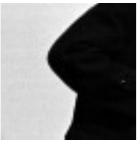}}\qquad
\subfigure[ADMM-Prox]{\includegraphics[width=.13\textwidth]{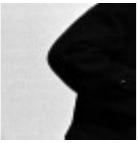}}
\end{center}
\caption{
Zoom-in views of the recovered images of Figure \ref{fig3}. D=24 in the 1st-2nd rows; D=16 in 3rd-4th rows. Scanning on  square-lattice  in 1st and 3rd rows and random-lattice in 2nd, 4th rows.
}
\label{fig66}
\end{figure}

The results of the recovered images and probes are shown in Figure  \ref{fig3}, and their corresponding zoom-in views in Figure \ref{fig66}.
The recovery results demonstrate how ePIE and PALM  are more sensitive to the number of measurements.
With $D=24$, ePIE fails due to blow-up of its iterative solutions. The recovered images using PALM (Figure \ref{fig3} (c, g), and its zoom-in Figure \ref{fig66} (c, g)) present obvious stripe structures and  ringing artifacts. With fewer number of measurements (or larger sliding distances), DR and Algorithm \ref{algADMM} work better than ePIE and PALM.
In the first and third rows of Figures~\ref{fig3} and \ref{fig66} (square-lattice scanning), periodical structural artifacts can be seen in all compared  reconstructions.
In the case of DR, such artifacts are specially severe.
In the case of ADMM, ADMM-Prox, PALM and ePIE, those artifacts can be attenuated with $D = 16$ (Figure \ref{fig3} and \ref{fig66} third row).
The results from Figures \ref{fig3} and \ref{fig66} (first row) demonstrate how both ADMM and ADMM-Prox can still attenuate these artifacts with fewer number of measurements ($D = 24$).
In the case of random-lattice scanning (Figure \ref{fig3} and \ref{fig66}, fourth row), all algorithms produce higher quality reconstructions. However, it can be noted in Figure \ref{fig66} (zoom-in views) how ADMM, ADMM-Prox and DR can recover much sharper edges than ePIE and PALM.

The convergence analysis is presented in Figure~\ref{fig4}.
The R-factor in the case of ADMM and ADMM-Prox decreases faster than with the other compared algorithms \emph{w.r.t.} the number of iterations.
Furthermore, within one thousand iterations, the proposed algorithms reach the given tolerance. Those results experimentally demonstrate the convergence of the proposed algorithms, which is consistent with the previous convergence analysis.
Specifically, inferred from the first row of Figure \ref{fig4}, ADMM and ADMM-Prox converge the fastest among all the compared algorithms: neither ePIE, PALM nor DR reach the desired tolerance within one thousand iterations.
 Due to the property of  fast convergence of ADMM and ADMM-Prox, the SNRs of recovered images and probes  are much higher than those by other algorithms. {Execution times for the compared algorithms can be found in Table  \ref{tab0}.  Due to the additional update of the multipliers,  the single step complexity of  ADMM and ADMM-Prox  is slightly higher than the compared algorithms.
Because the proposed ADMM algorithms require fewer iterations, they achieve considerable lower execution times compared with DR, ePIE and PALM. ADMM and ADMM-Prox  are 1.7 and 2 times faster, on average, for square- and random-lattice scanning, respectively.
 }

{
The recovery images of ADMM-Prox with a careful selection of the preconditioning matrices have higher quality than those of standard ADMM: about 0.6 and 2.0 dB increase of SNRs from the images in Figure \ref{fig4} (e, g) is gained. For the random-lattice scanning, they have very similar performances. In the following results, we run  proposed algorithms simply setting the preconditioning matrices to zeros. Note that ADMM-Prox require more parameters to be tuned to gain the best performance but ADMM produces comparable results by tuning a single parameter.
}

\begin{figure}[]
\begin{center}
\subfigure[]{\includegraphics[width=.24\textwidth]{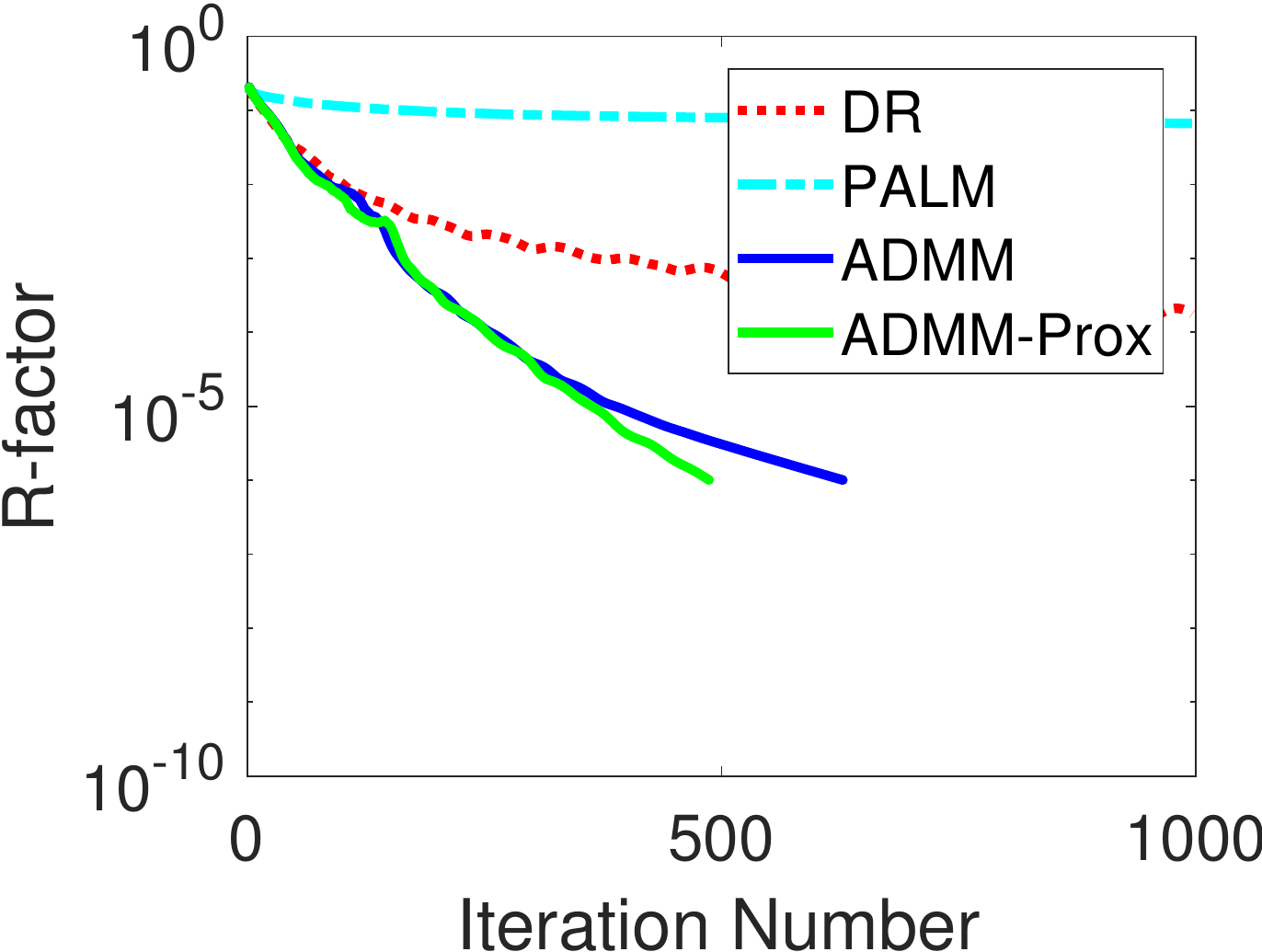}}
\subfigure[]{\includegraphics[width=.24\textwidth]{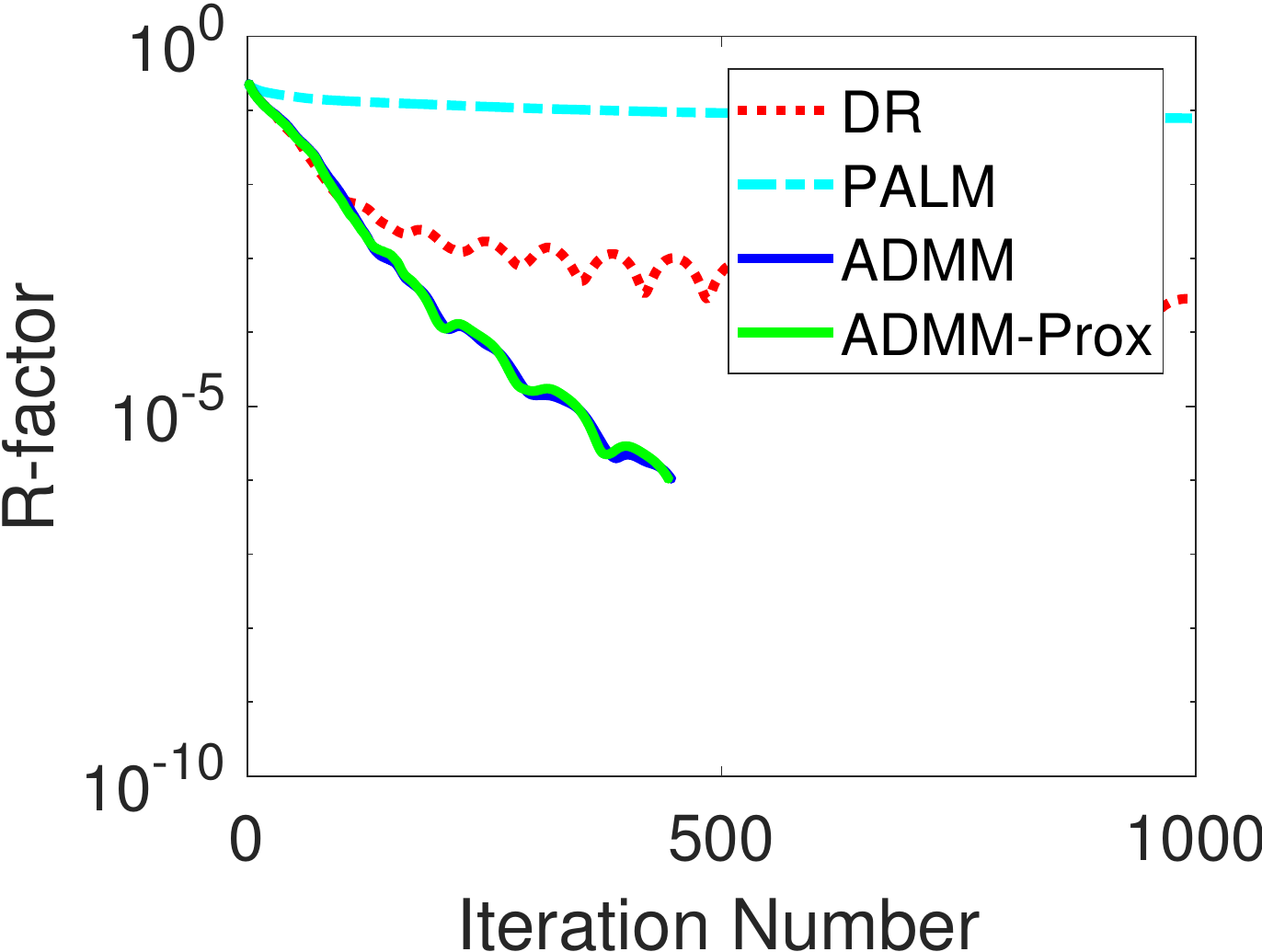}}
\subfigure[]{\includegraphics[width=.24\textwidth]{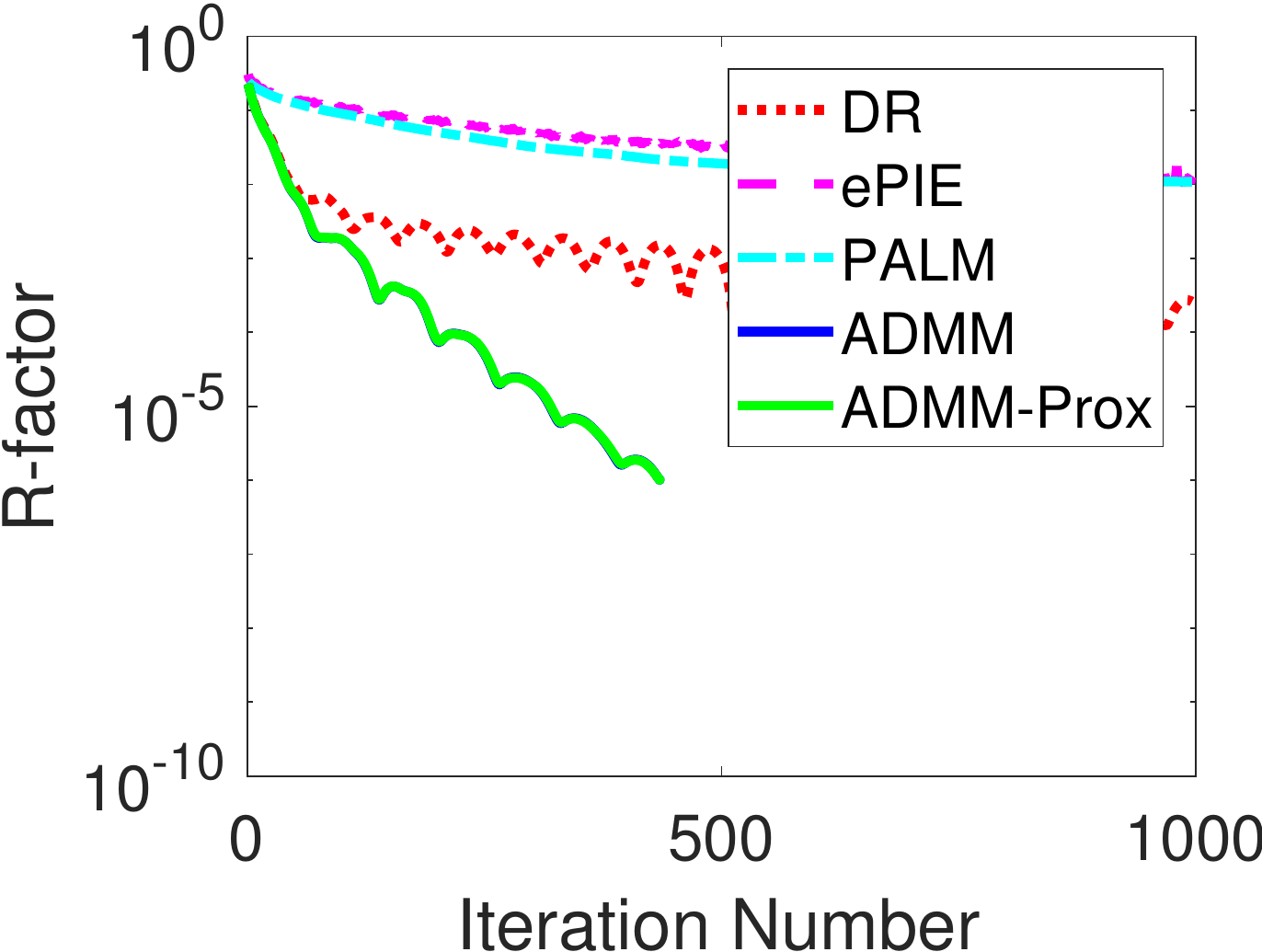}}
\subfigure[]{\includegraphics[width=.24\textwidth]{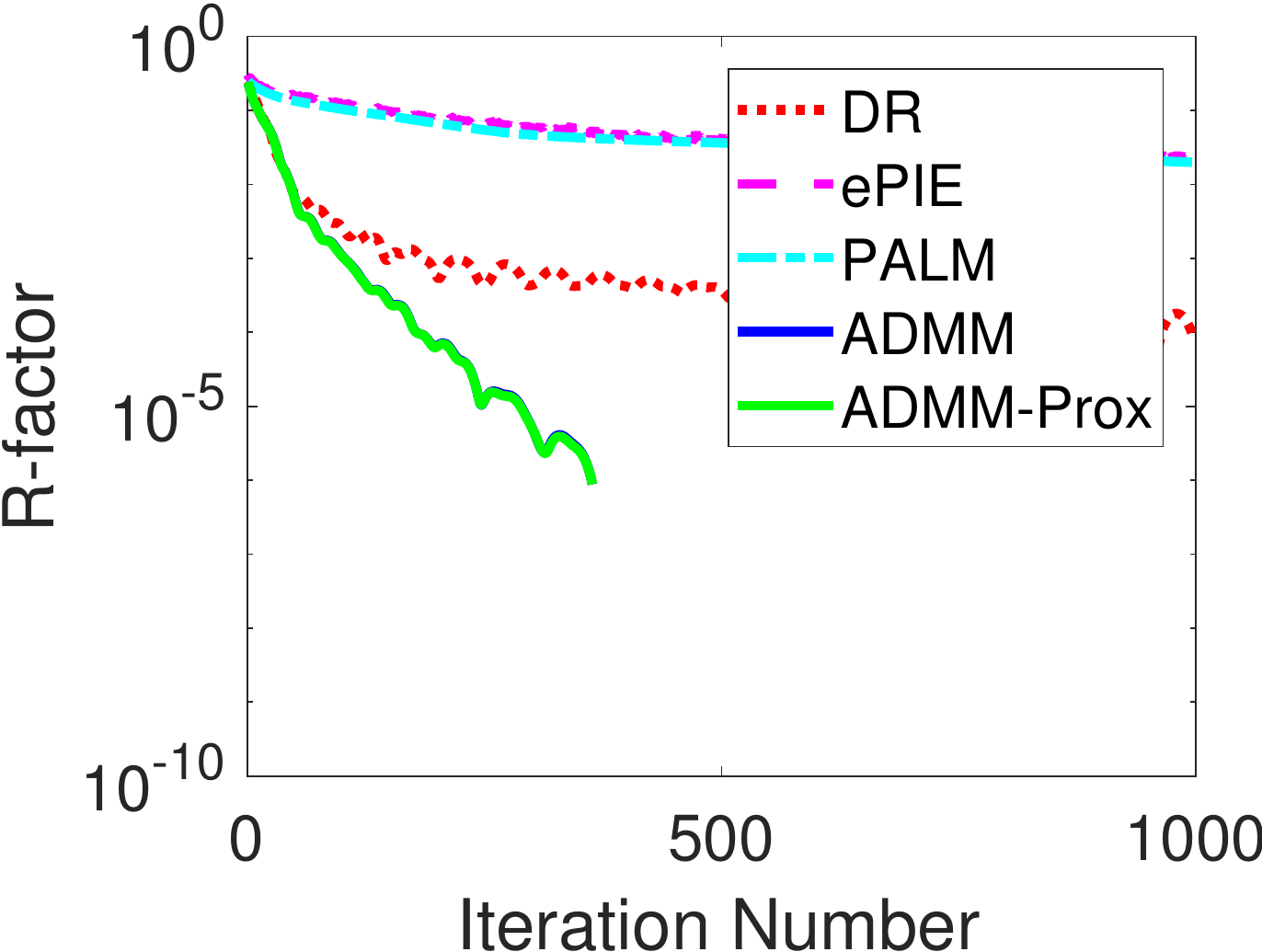}}
\subfigure[]{\includegraphics[width=.24\textwidth]{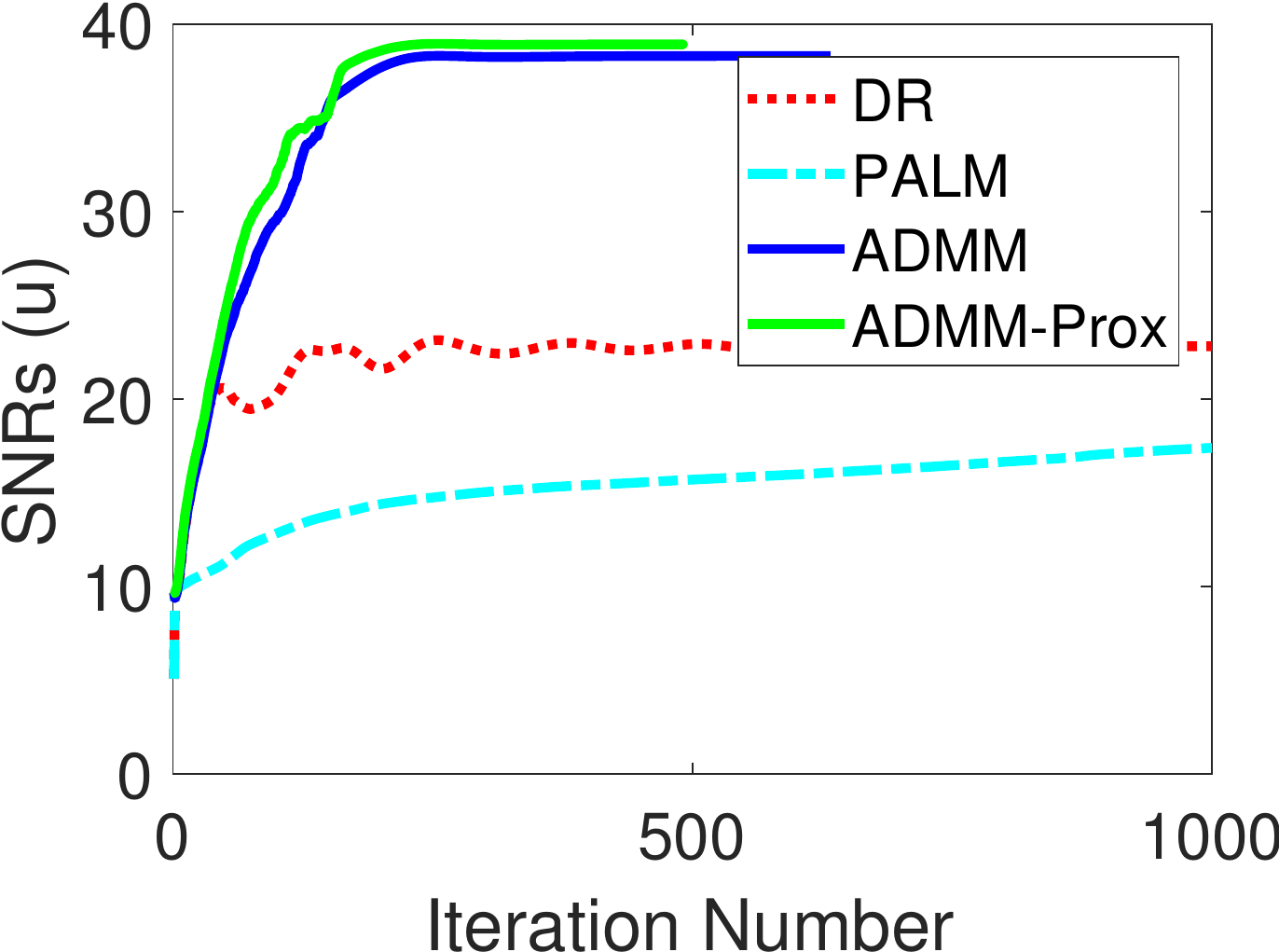}}
\subfigure[]{\includegraphics[width=.24\textwidth]{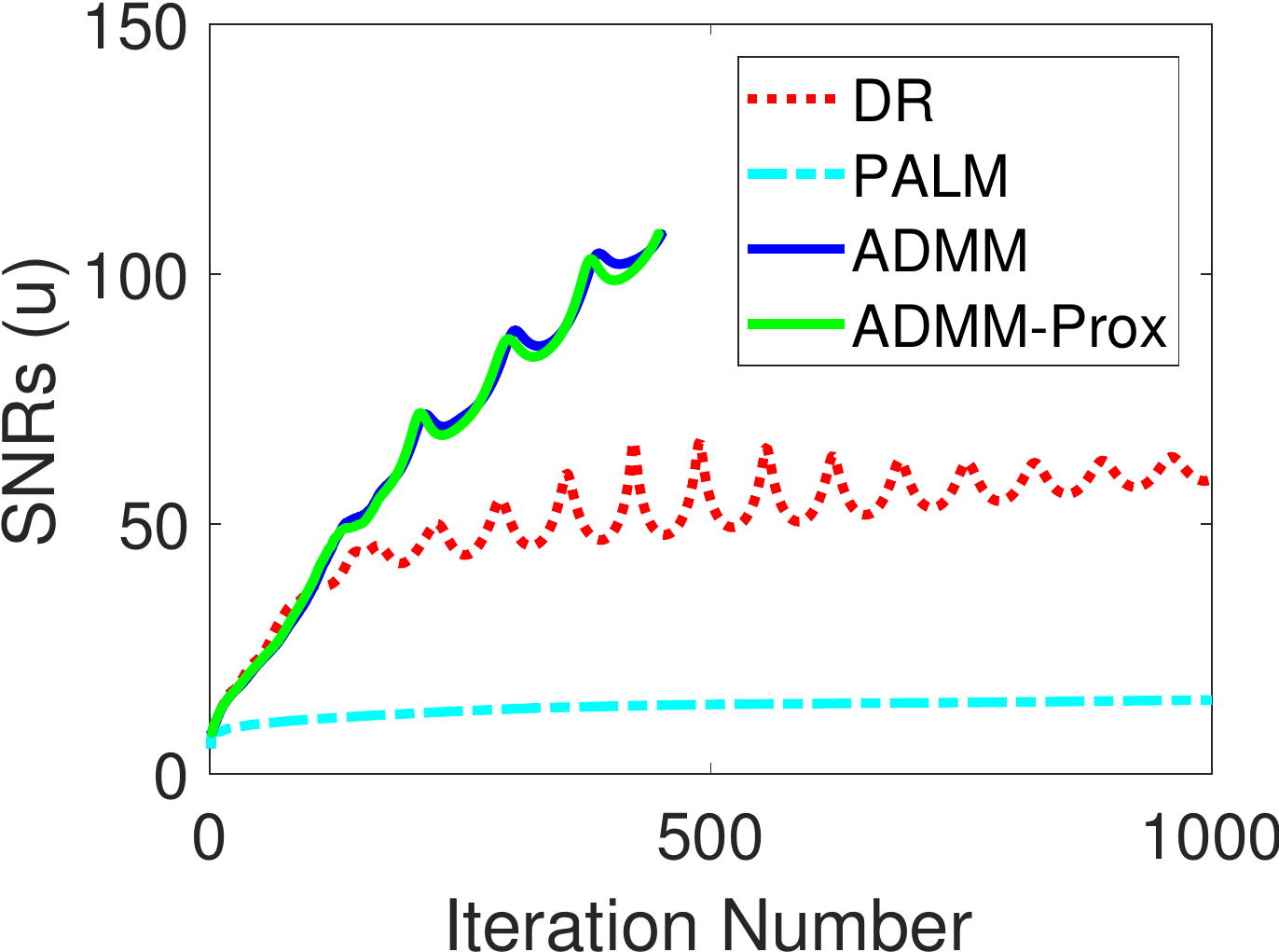}}
\subfigure[]{\includegraphics[width=.24\textwidth]{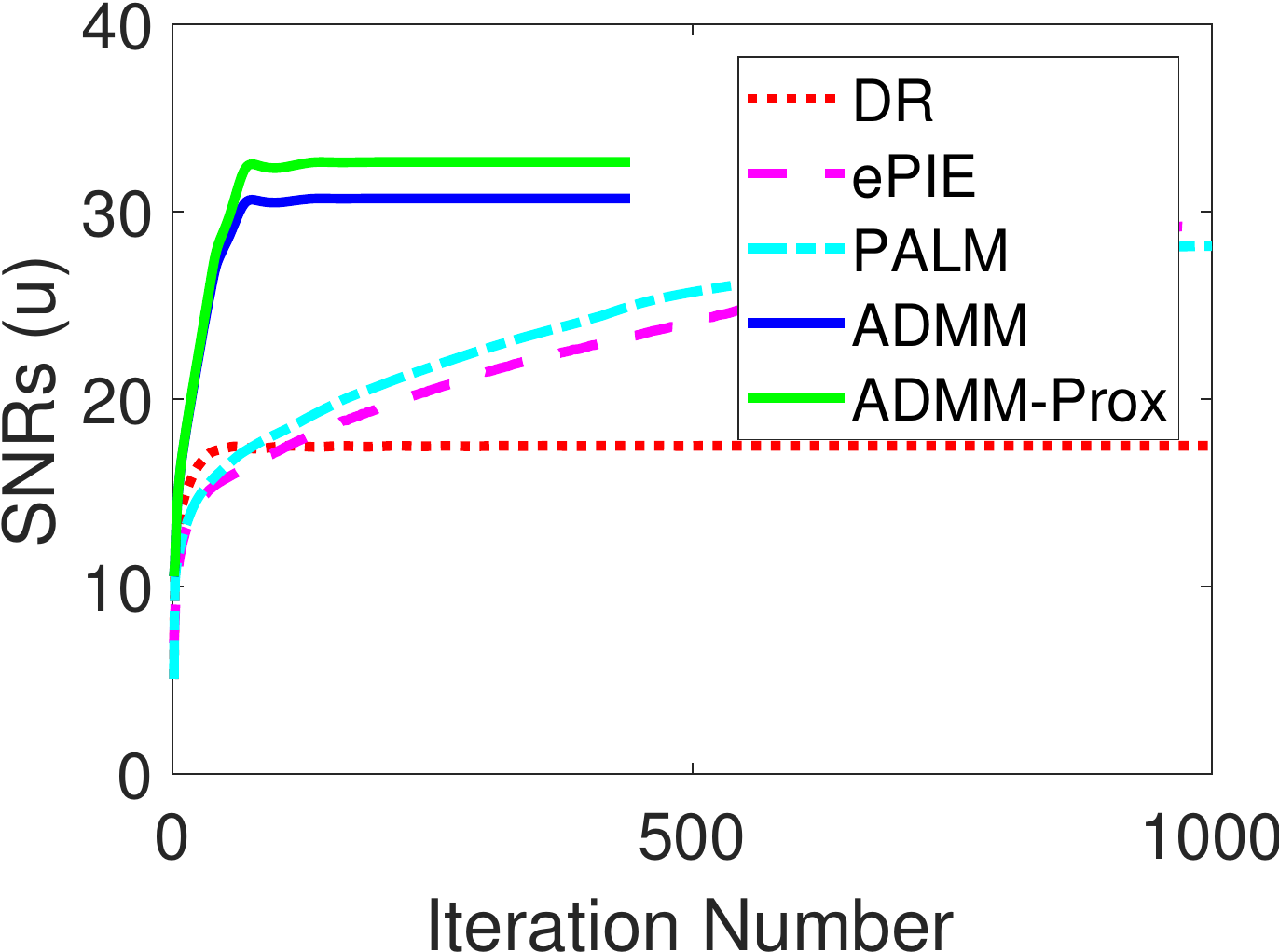}}
\subfigure[]{\includegraphics[width=.24\textwidth]{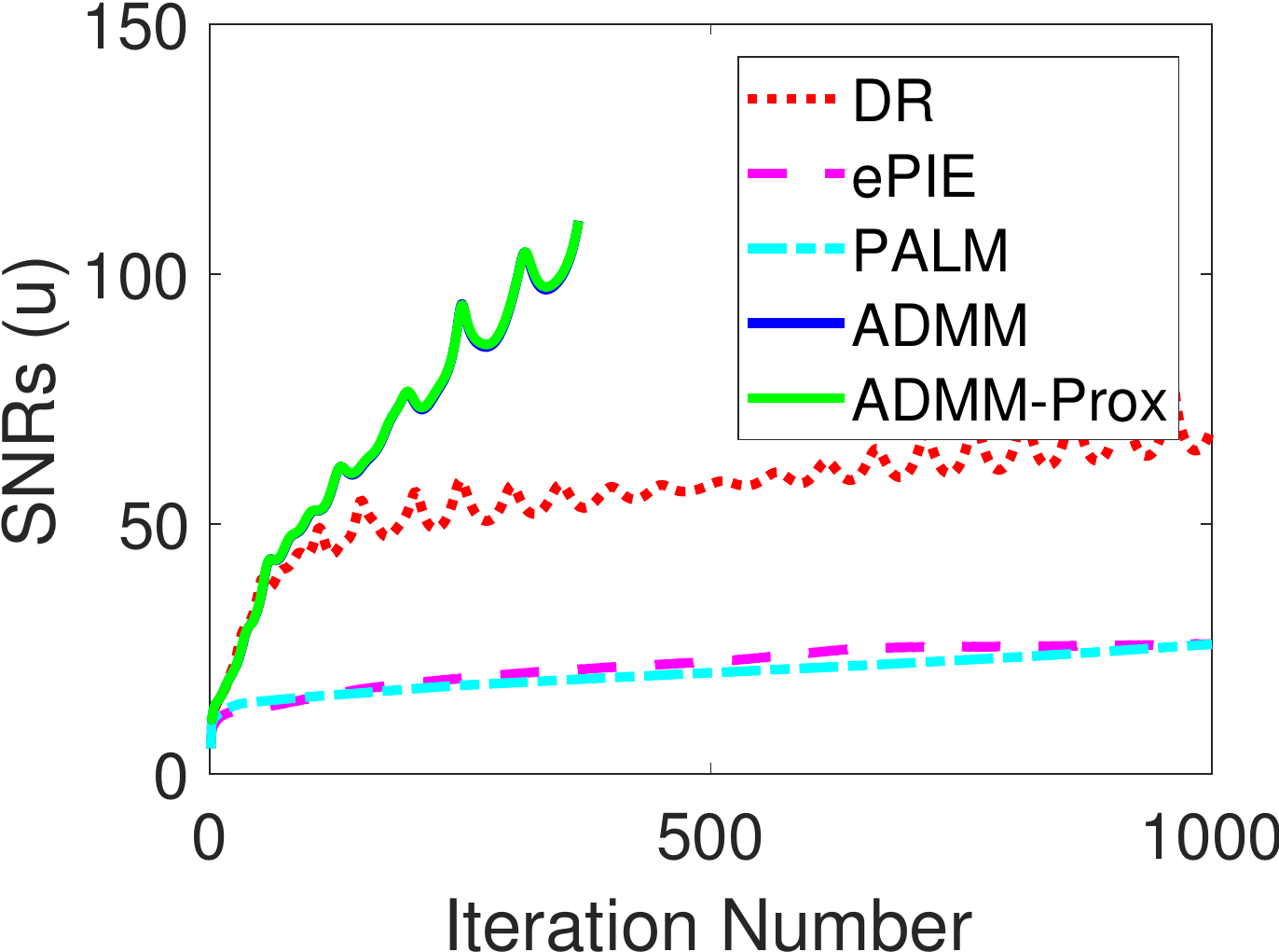}}
\subfigure[]{\includegraphics[width=.24\textwidth]{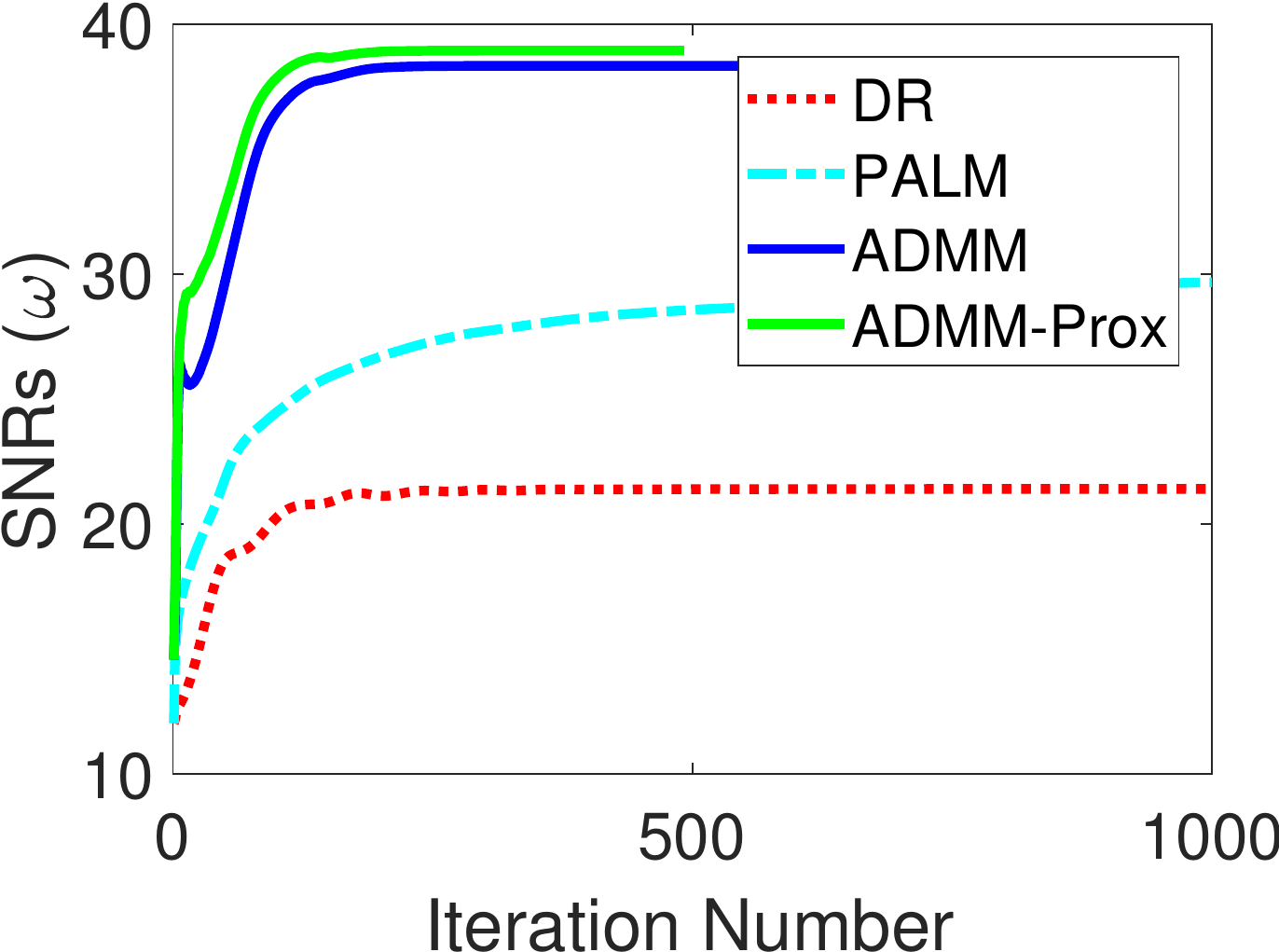}}
\subfigure[]{\includegraphics[width=.24\textwidth]{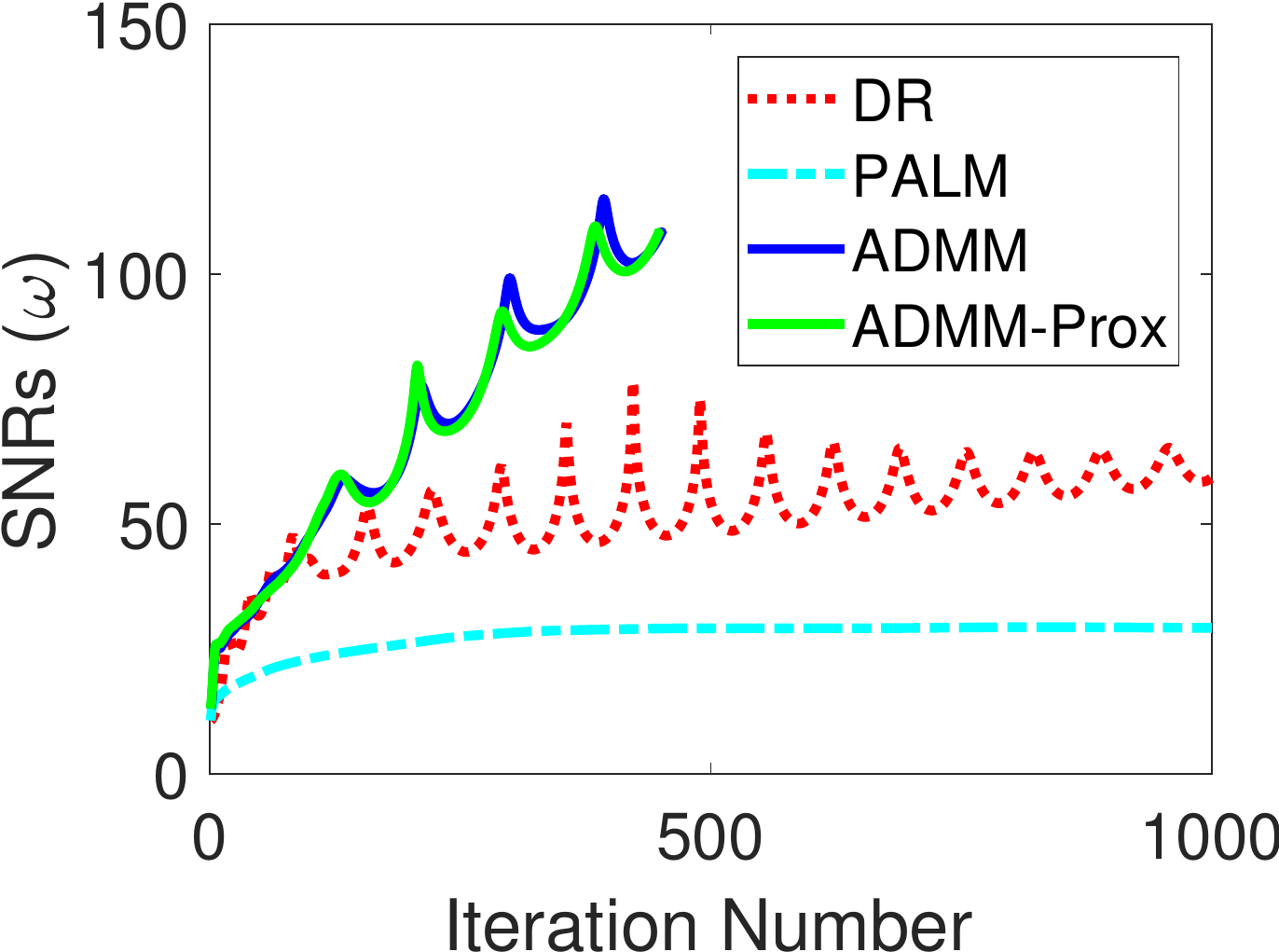}}
\subfigure[]{\includegraphics[width=.24\textwidth]{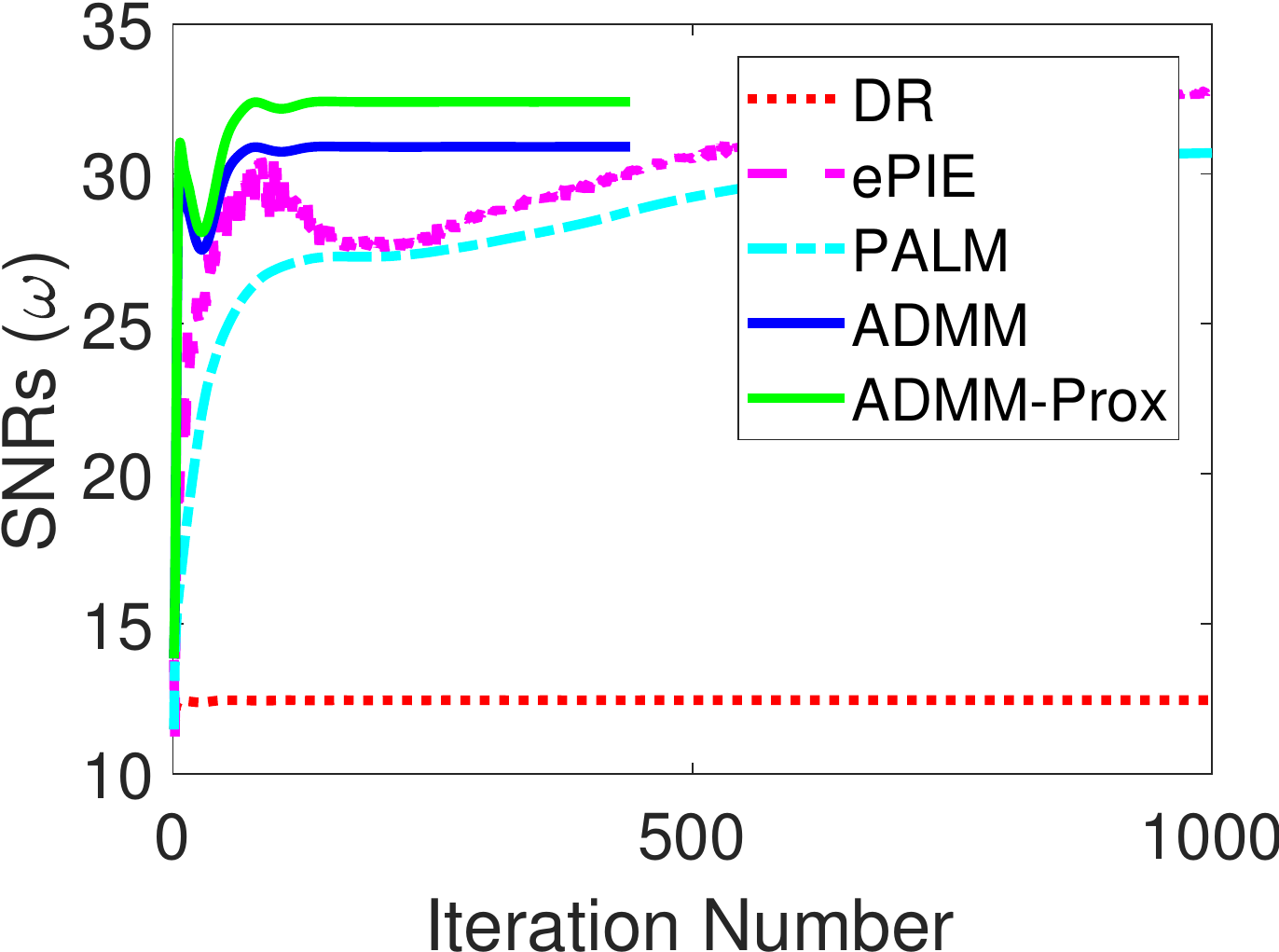}}
\subfigure[]{\includegraphics[width=.24\textwidth]{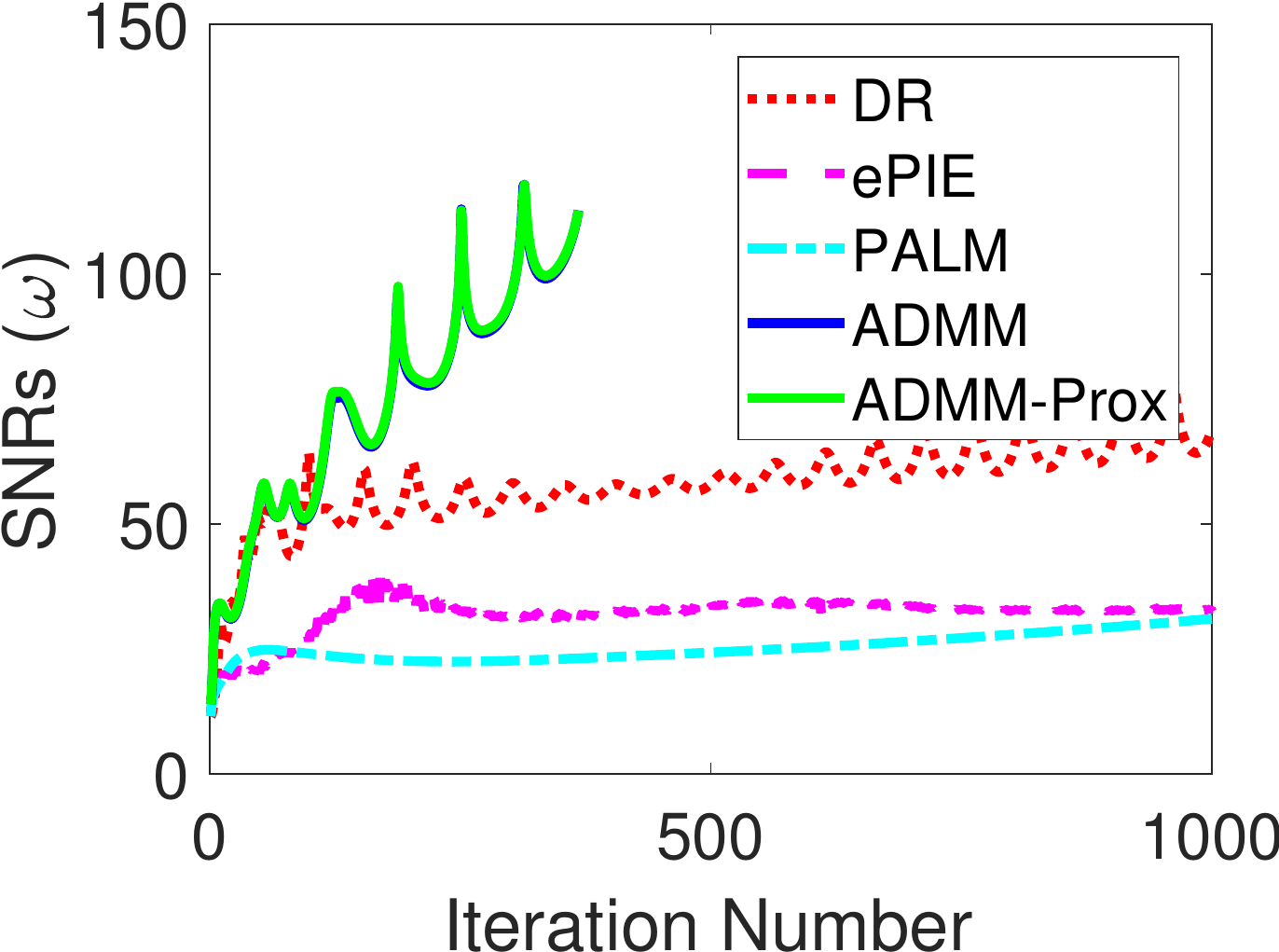}}
\end{center}
\caption{ Convergence histories of R-factor, SNRs of  images and probes from up to down, respectively by ePIE \cite{maiden2009improved}, DR \cite{thibault2009probe},  PALM \cite{hesse2015proximal} and ADMM (Algorithm \ref{algADMM}). D=24 in the 1st-2nd columns; D=16 in 3rd-4th columns. Scanning on  square-lattice  in 1st and 3rd columns and random-lattice in 2nd and 4th columns. All compared algorithms stop if  $\mathrm{R-factor}^k\leq 1.0\times 10^{-6}$ or iteration number reaches $1000.$ }
\label{fig4}
\vskip -.1in
\end{figure}

\begin{table}[h!]
\caption{{Runtimes of all compared algorithms.} The format of the data ``$\cdot/\cdot(\cdot)$'' denotes ``Elapsed time (in seconds)/Number of iterations (elapsed time per iteration)''. ``-/-'' means that the iterative solutions blow up. Note that the iteration number of ePIE means the number of cycles.}
\begin{center}
\scalebox{0.6}{
\begin{tabular}{|c|c||c|c|c|c|c|}
\hline
\multicolumn{2}{|c||}{Cases}&DR &ePIE &PALM &ADMM &ADMM-Prox\\
\hline
\multirow{2}{*}{Square-lattice}&D=24&170/1000(0.17) &-/- &115/1000(0.12) &93/633(0.15)&72/492(0.15) \\
&D=16&218/1000(0.22) &200/1000(0.20) &168/1000(0.17) & 124/444(0.28)&124/444(0.28)  \\
\hline
\multirow{2}{*}{Random-lattice}
&D=24&150/1000(0.15)&-/-&108/1000(0.11)& 65/452(0.14)&65/449(0.14)\\
&D=16&219/1000(0.22)&200/1000(0.20)&169/1000(0.17)& 102/368(0.28)&102/368(0.28)\\
\hline
\end{tabular}
}
\end{center}
\label{tab0}
\end{table}
\textbf{ ii) Noisy case.}
Poisson noisy measurements are considered with peak levels $\eta=0.1$ and $1$,  $D=16$, and random-lattice scanning. {$\mathrm{SNR}_{intensity}=38.33, 58.23$ for $\eta=0.1, 1$ respectively.}  Since the iterative solutions of ePIE blow up,  we only show results for ADMM, ADMM-Prox, DR and PALM.
The reconstruction results can be found in Figure~\ref{fig5}.

One should notice the obvious drift existing in the probe by DR in Figure \ref{fig5} (a), with peak level $\eta=0.1$. For this reason, the quality  of the recovered images of DR is the lowest among all compared algorithms, since  DR does not have fixed points for an inconsistent feasibility problem \cite{bauschke2004finding}. 
  At this noise level,  ADMM and PALM can  produce visually acceptable results. When the noise gets weak (peak level $\eta=1$) all algorithms can achieve high quality reconstructions, Figure~\ref{fig5} (second row).
 The zoom-in views of this experiment are presented in Figure~\ref{fig6}.
In this view we can see how the proposed ADMM algorithm, Figure \ref{fig6} (c), presents much weaker ringing artifacts compared to PALM and DR, Figure \ref{fig6} (a, b). At peak level $\eta=1,$ the recovered image from ADMM is nearly artifact-free, Figure \ref{fig6} (g), while the results by the other two algorithms contain evident ringing artifacts, Figure \ref{fig6} (e, f).

Convergence histories for this experiment can be found in Figure~\ref{fig7}.
The results of Figure~\ref{fig7} (a, c) demonstrate how DR is not stable with strong noise ($\eta=0.1$). The R-factors of ADMM are smaller than those obtained by PALM and DR, and the SNRs of the recovery results of ADMM are also higher than those obtained by PALM, which is consistent with the results shown in Figures~\ref{fig5} and \ref{fig6}. In summary, ADMM and PALM are more stable than DR and ePIE, and  ADMM can recover images with higher quality than all the other compared algorithms.

\begin{figure}[h!]
\hskip .15\textwidth DR \cite{thibault2009probe} \hskip .14\textwidth PALM \cite{hesse2015proximal}  \hskip .2\textwidth ADMM
\begin{center}
\subfigure[]{\includegraphics[width=.25\textwidth]{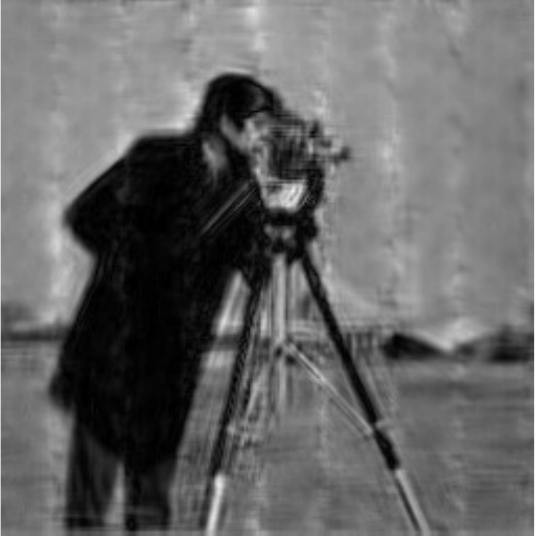}\includegraphics[width=.04\textwidth]{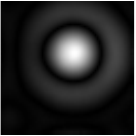}}
\subfigure[]{\includegraphics[width=.25\textwidth]{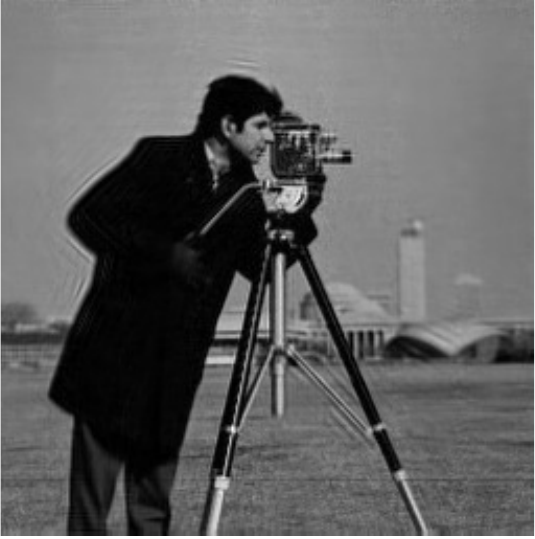}\includegraphics[width=.04\textwidth]{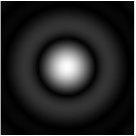}}
\subfigure[]{\includegraphics[width=.25\textwidth]{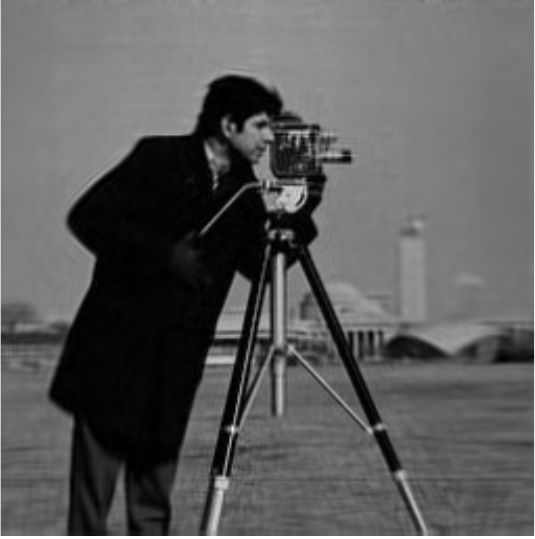}\includegraphics[width=.04\textwidth]{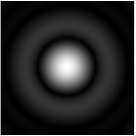}}\\
\subfigure[]{\includegraphics[width=.25\textwidth]{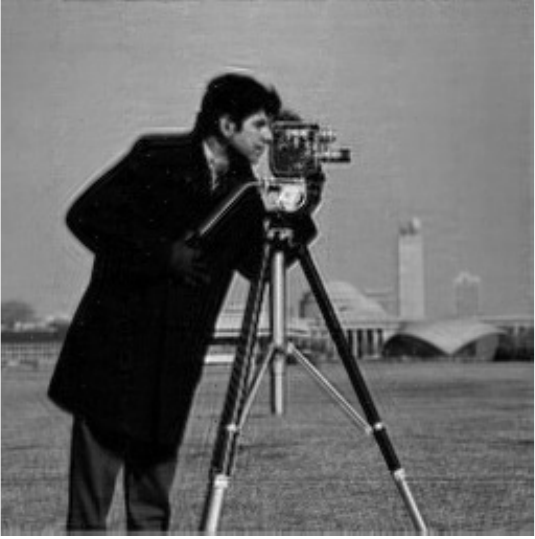}\includegraphics[width=.04\textwidth]{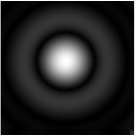}}
\subfigure[]{\includegraphics[width=.25\textwidth]{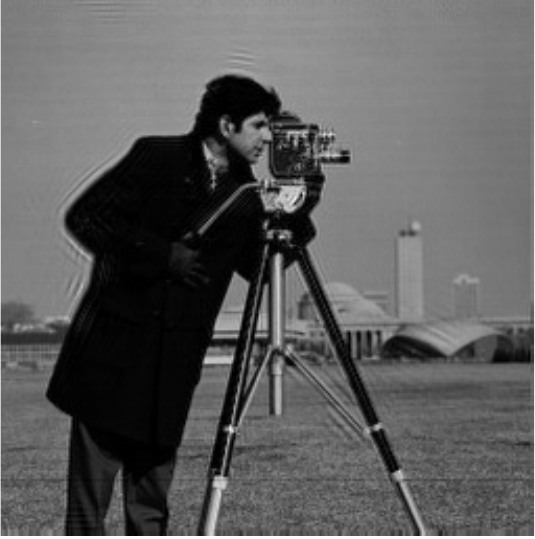}\includegraphics[width=.04\textwidth]{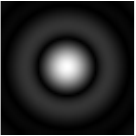}}
\subfigure[]{\includegraphics[width=.25\textwidth]{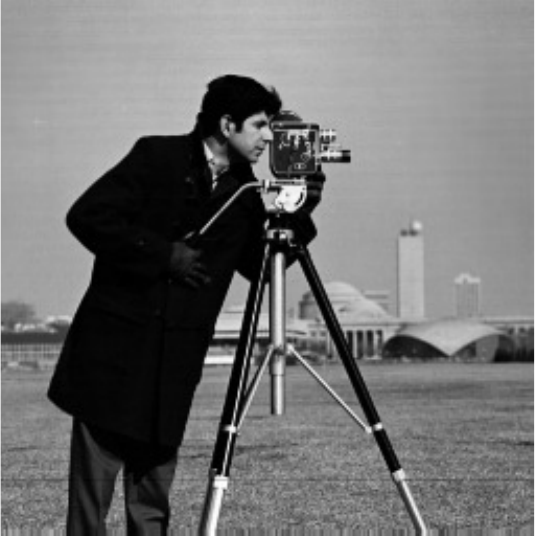}\includegraphics[width=.04\textwidth]{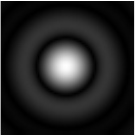}}
\end{center}
\caption{
Results of DR \cite{thibault2009probe}, PALM \cite{hesse2015proximal}  and ADMM (Algorithm \ref{algADMM}) with random-lattice scanning for Poisson noisy data. Peak level $\eta=0.1$ (top), $\eta=1$ (bottom). $\beta=0.5, 0.2$ for Algorithm \ref{algADMM} with $\eta=0.1, 1$, respectively.
All compared algorithms stop when the iteration number reaches $Iter_{Max}=300$.
}
\label{fig5}
\end{figure}

\begin{figure}[]
\begin{center}
\subfigure[DR \cite{thibault2009probe}]{\includegraphics[width=.135\textwidth]{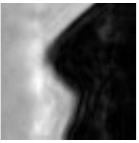}}
\subfigure[\hskip -.05in PALM\cite{hesse2015proximal}]{\includegraphics[width=.135\textwidth]{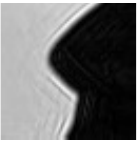}}
\subfigure[ADMM]{\includegraphics[width=.135\textwidth]{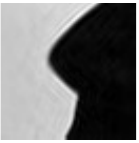}}
\subfigure[Truth]{\includegraphics[width=.135\textwidth]{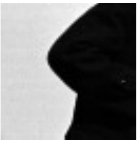}}
\subfigure[DR \cite{thibault2009probe}]{\includegraphics[width=.135\textwidth]{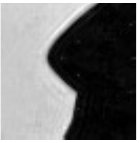}}
\subfigure[\hskip -.05in PALM\cite{hesse2015proximal}]{\includegraphics[width=.135\textwidth]{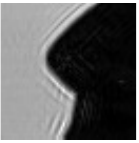}}
\subfigure[ADMM]{\includegraphics[width=.135\textwidth]{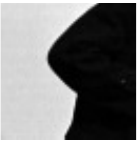}}
\end{center}
\caption{
Zoom-in views of Figure \ref{fig5}  with random-lattice scanning. (d): zoom-in views of ground truth.
(a, c), (e, g) are generated by magnifying Figure \ref{fig5} (a, c), (d, f), respectively.
(a, c) with $\eta=0.1$; (e, g) with $\eta=1.$
}
\label{fig6}
\end{figure}

\begin{figure}[]
\begin{center}
\subfigure[]{\includegraphics[width=.25\textwidth]{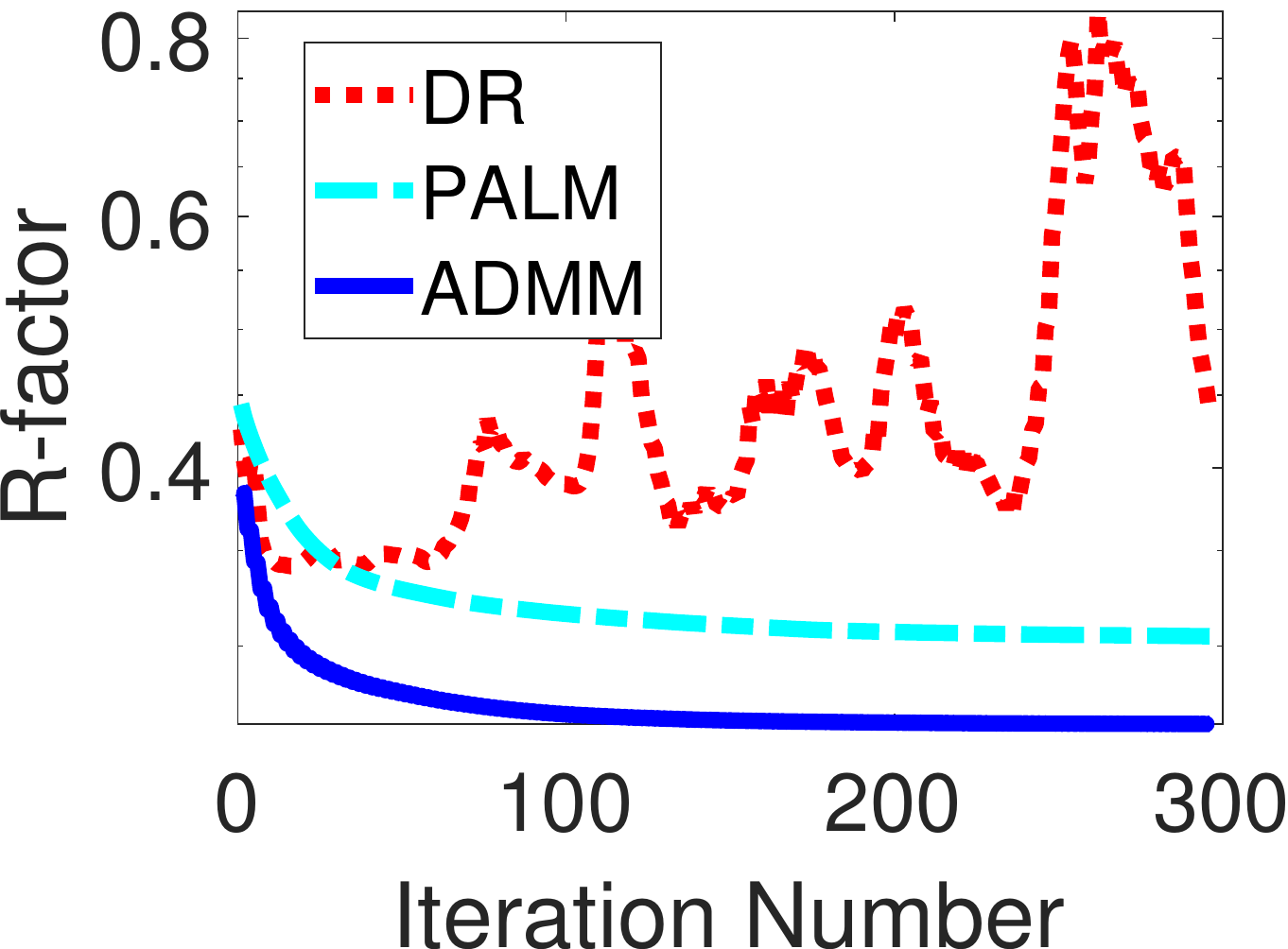}}
\subfigure[]{\includegraphics[width=.25\textwidth]{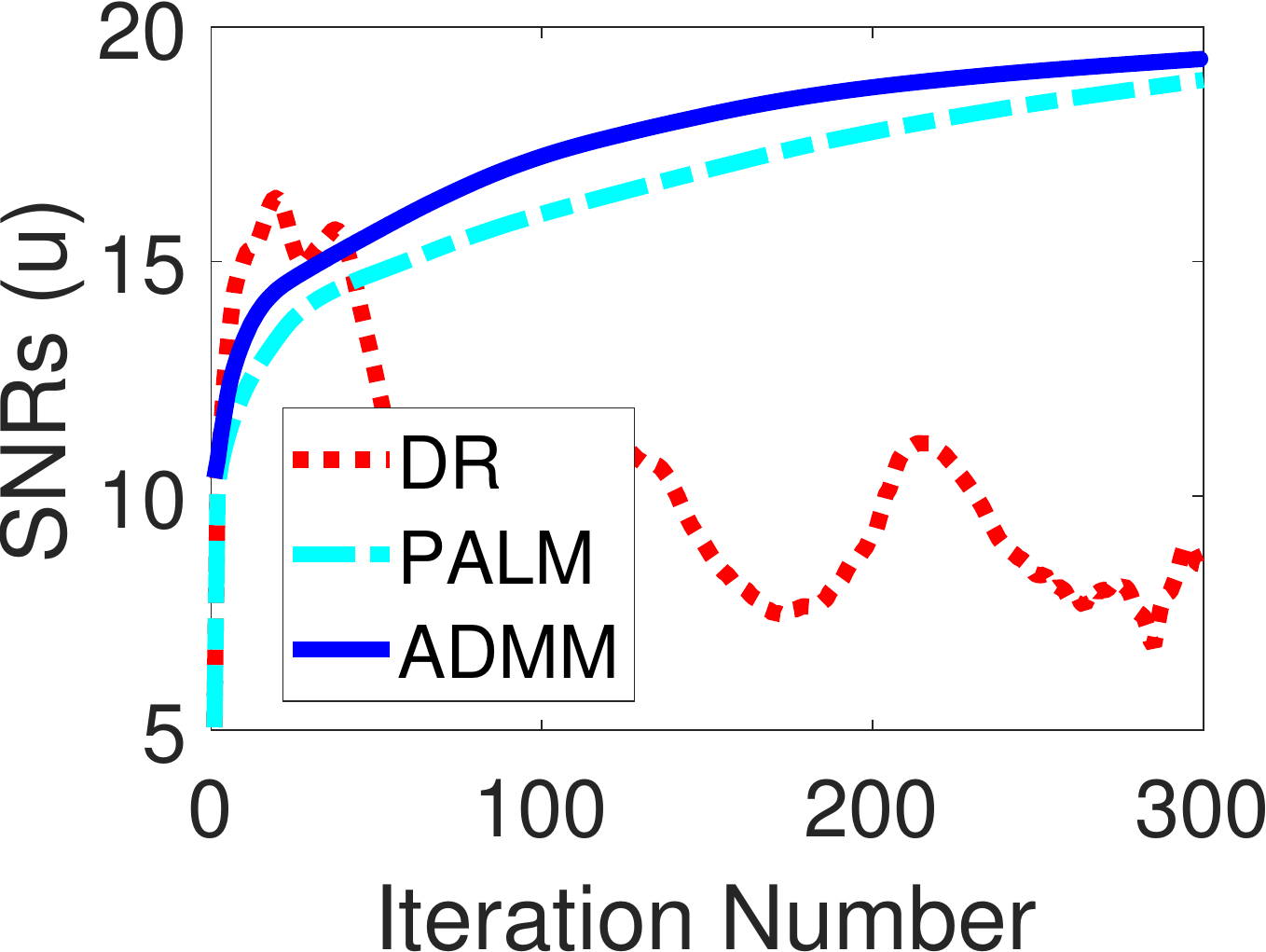}}
\subfigure[]{\includegraphics[width=.25\textwidth]{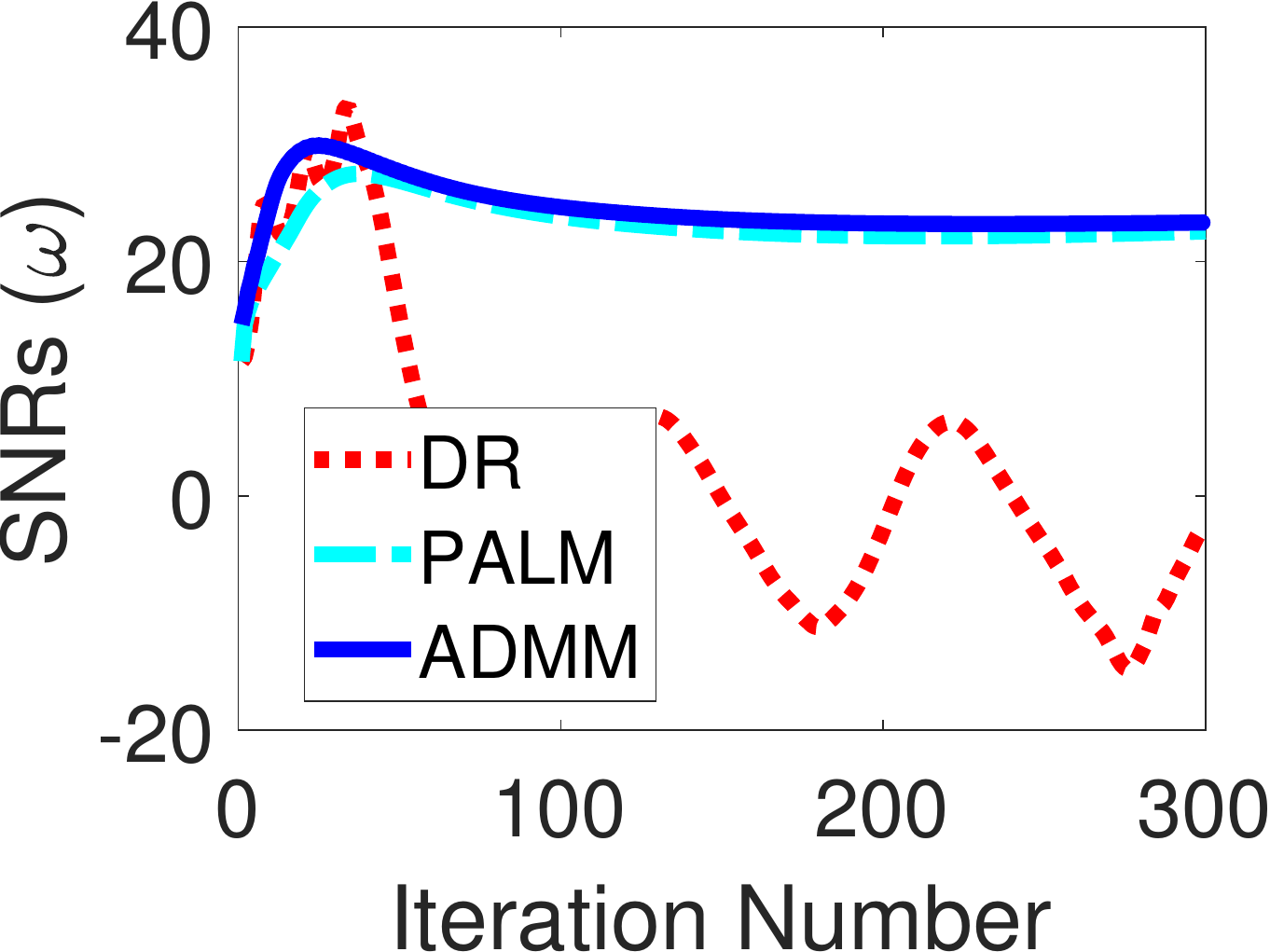}}
\subfigure[]{\includegraphics[width=.25\textwidth]{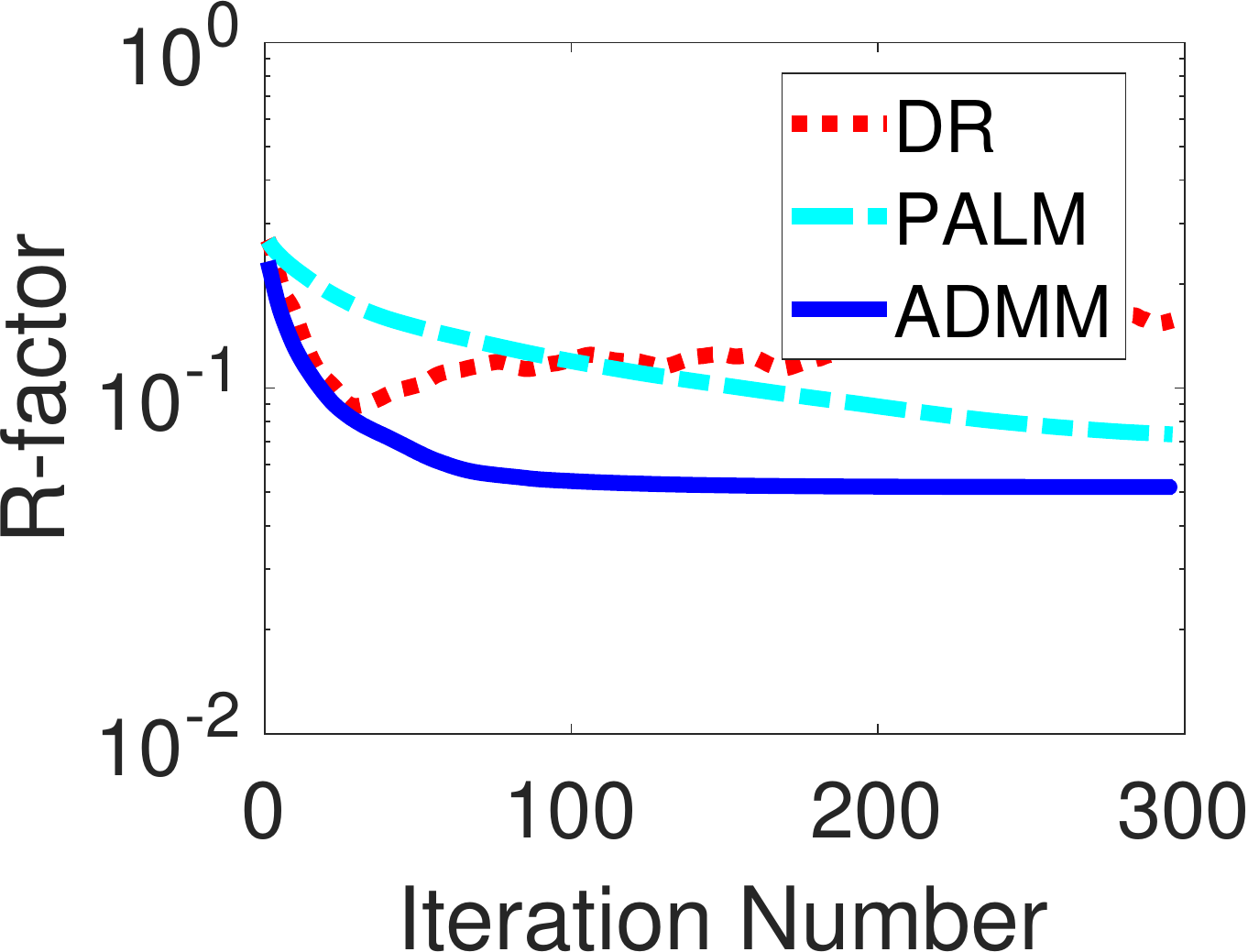}}
\subfigure[]{\includegraphics[width=.25\textwidth]{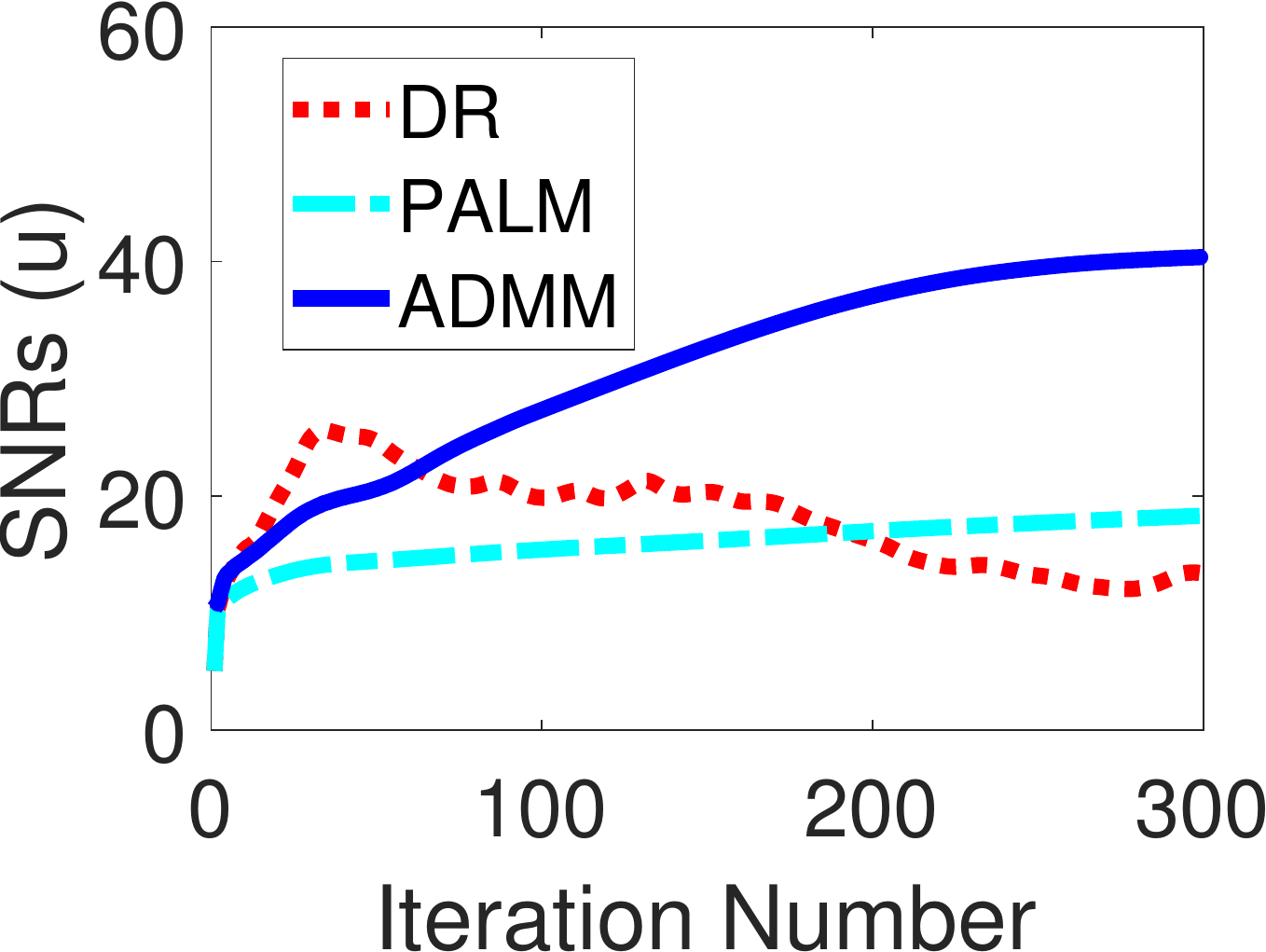}}
\subfigure[]{\includegraphics[width=.25\textwidth]{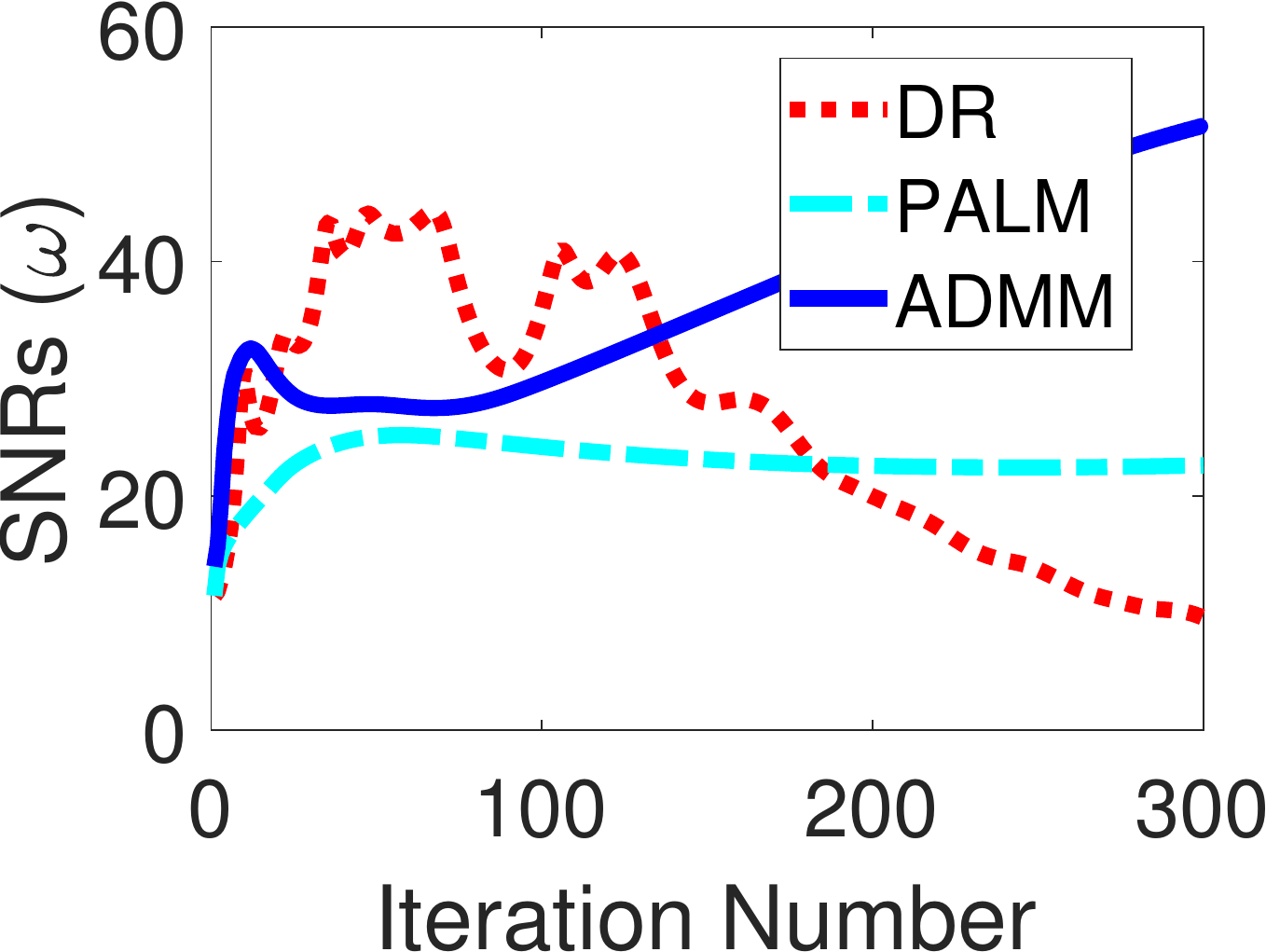}}
\end{center}
\caption{Convergence histories (random-lattice scanning) of DR \cite{thibault2009probe}, PALM \cite{hesse2015proximal} and ADMM (Algorithm \ref{algADMM}). Histories for R-factor, SNRs for the image and probe from left to right, respectively. $D=16$. Peak level $\eta=0.1$ (top), $\eta=1$ (bottom).}
\label{fig7}
\end{figure}

\subsubsection{Different metrics}
We  show the performance of Algorithm \ref{algADMM} with {four different metrics}: pAGM, pIPM, IGM and wIGM (the latter two are defined in \eqref{DF}). We conduct these experiments with $D=16$ and random-lattice scanning.

First, noiseless measurements are considered, with  convergence histories shown  in Figure \ref{fig8}.
Within the first 50 iterations,  R-factors with four different metrics  decrease with almost the same speed. As the iterations go on,
Algorithm \ref{algADMM} with IGM converges the slowest, and  it is further accelerated  with the weighted norm in wIGM.  Algorithm \ref{algADMM} with pAGM/pIPM converges faster than when using the other two metrics.

We also consider the noisy measurements.
Convergence histories for measurements contaminated by Poisson noise with $\eta=1$ are shown in Figure \ref{fig8} (second row).
It can be seen in Figure~\ref{fig8} (d) how ADMM with IGM  converges at the slowest speed. When using pAGM or pIPM the convergence speed is slightly improved. Figure \ref{fig8} (e, f) shows how the SNRs of the recovered results with pIPM are higher than those with the other metrics, since pIPM derives from the MLE for Poisson noise.

\begin{figure}[]
\begin{center}
\subfigure[]{\includegraphics[width=.25\textwidth]{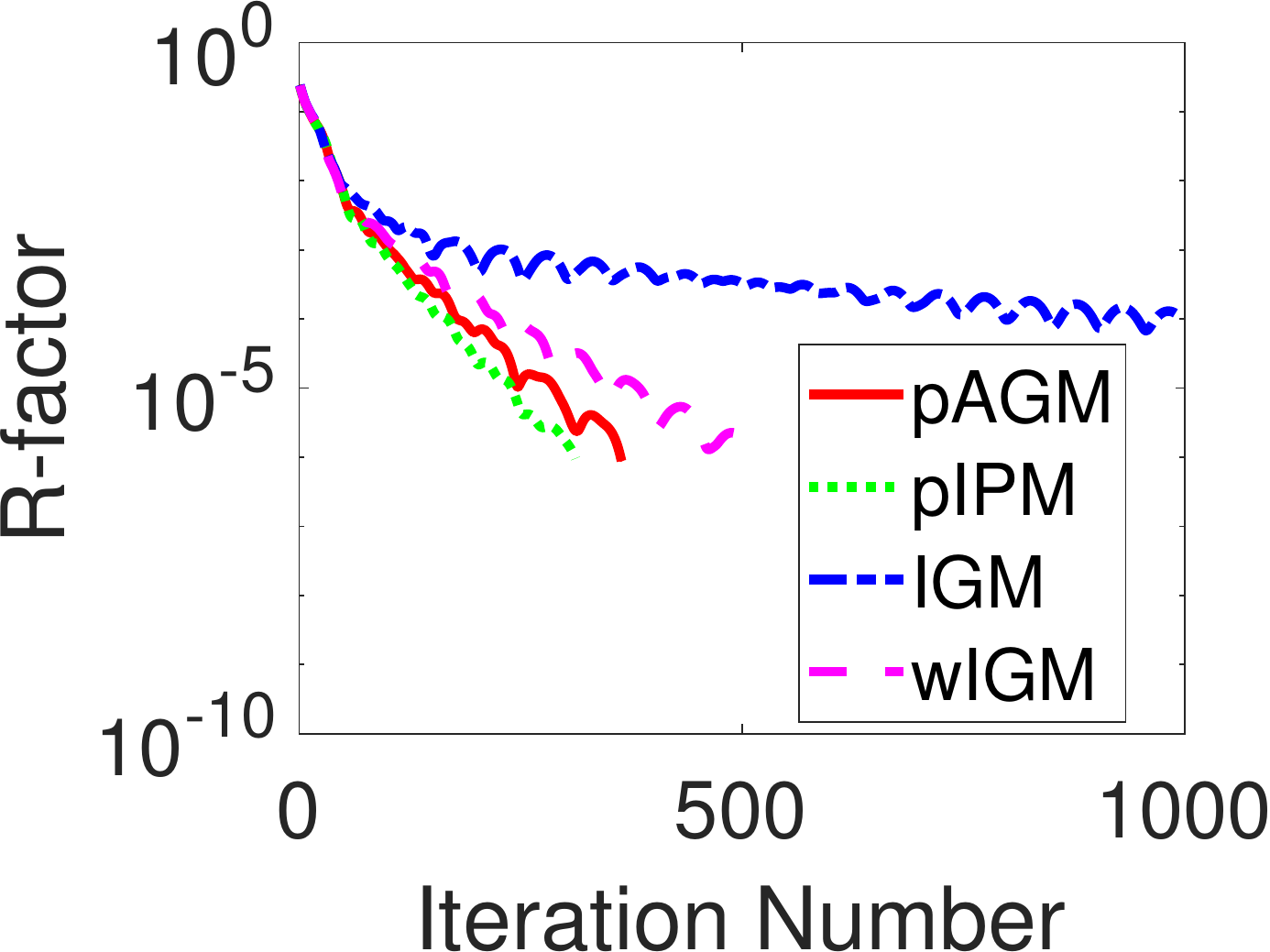}}
\subfigure[]{\includegraphics[width=.25\textwidth]{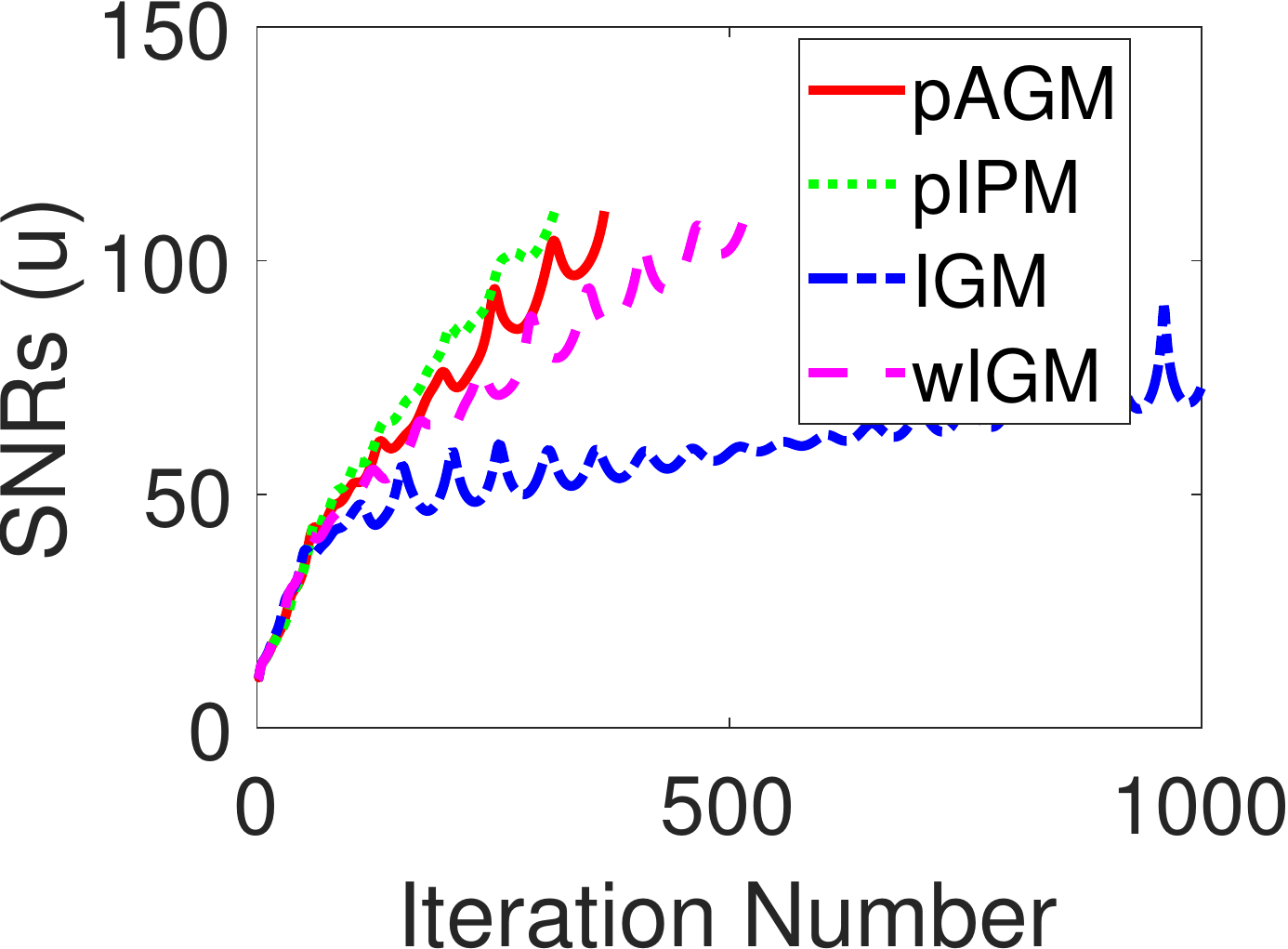}}
\subfigure[]{\includegraphics[width=.25\textwidth]{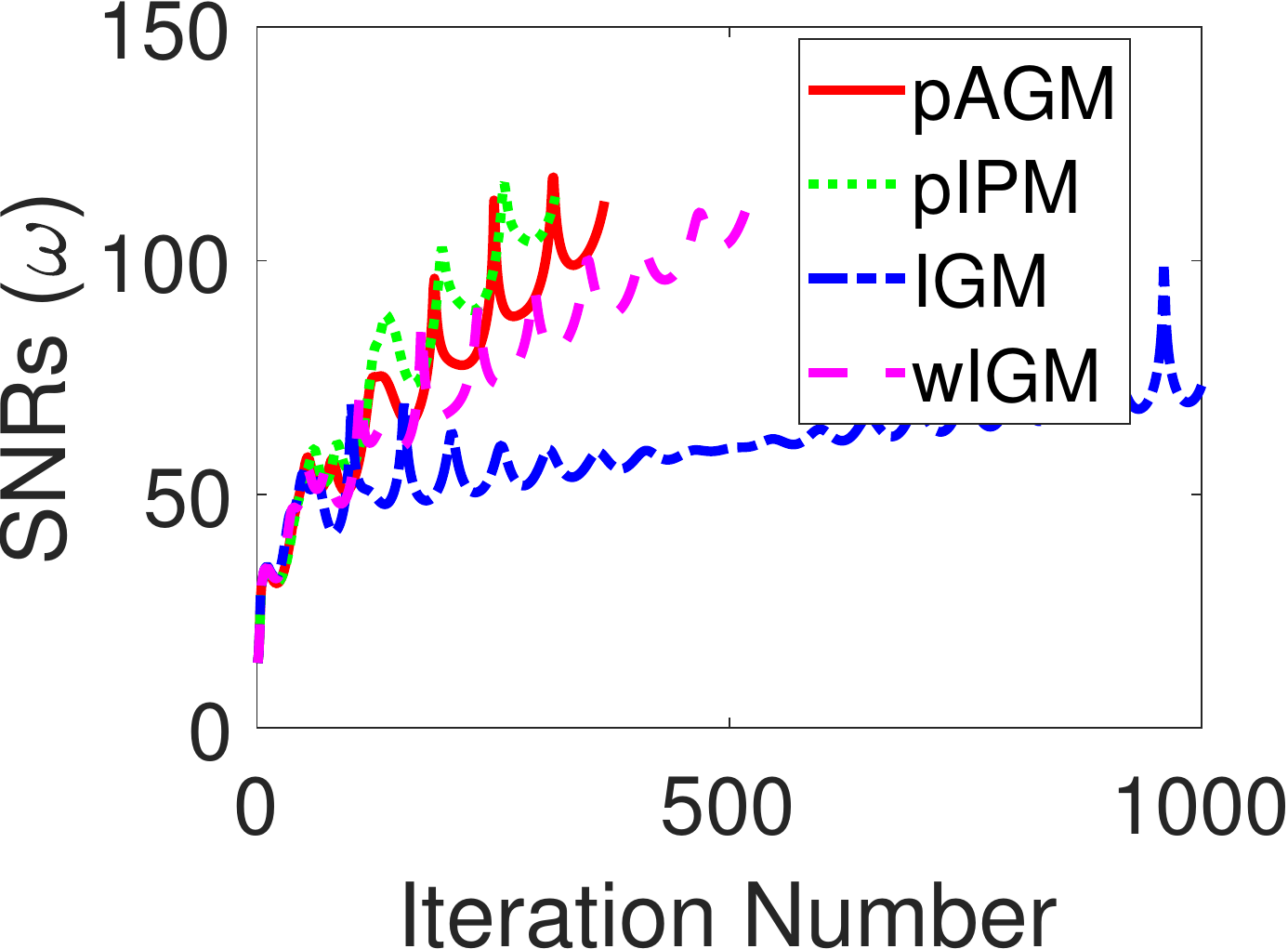}}
\subfigure[]{\includegraphics[width=.25\textwidth]{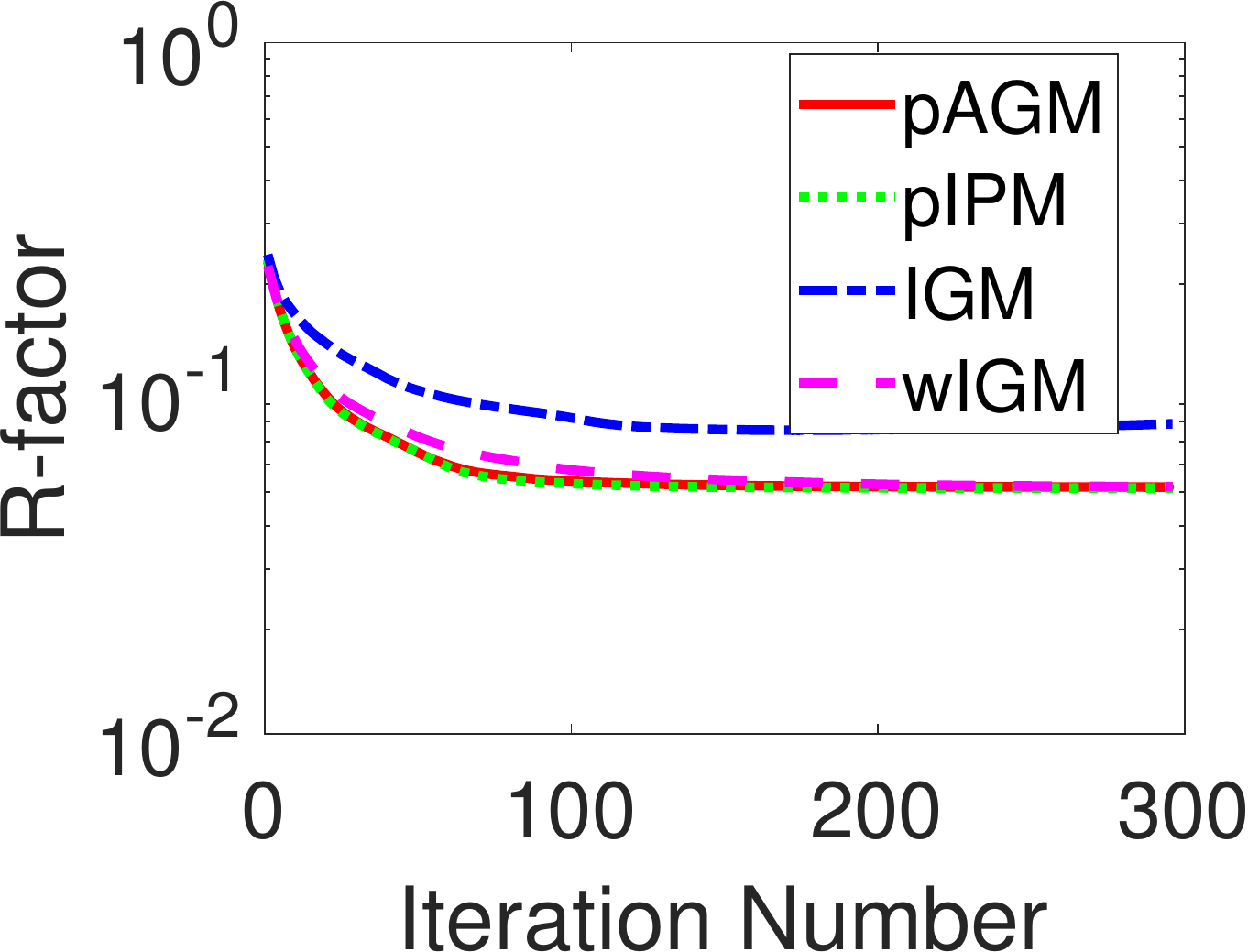}}
\subfigure[]{\includegraphics[width=.25\textwidth]{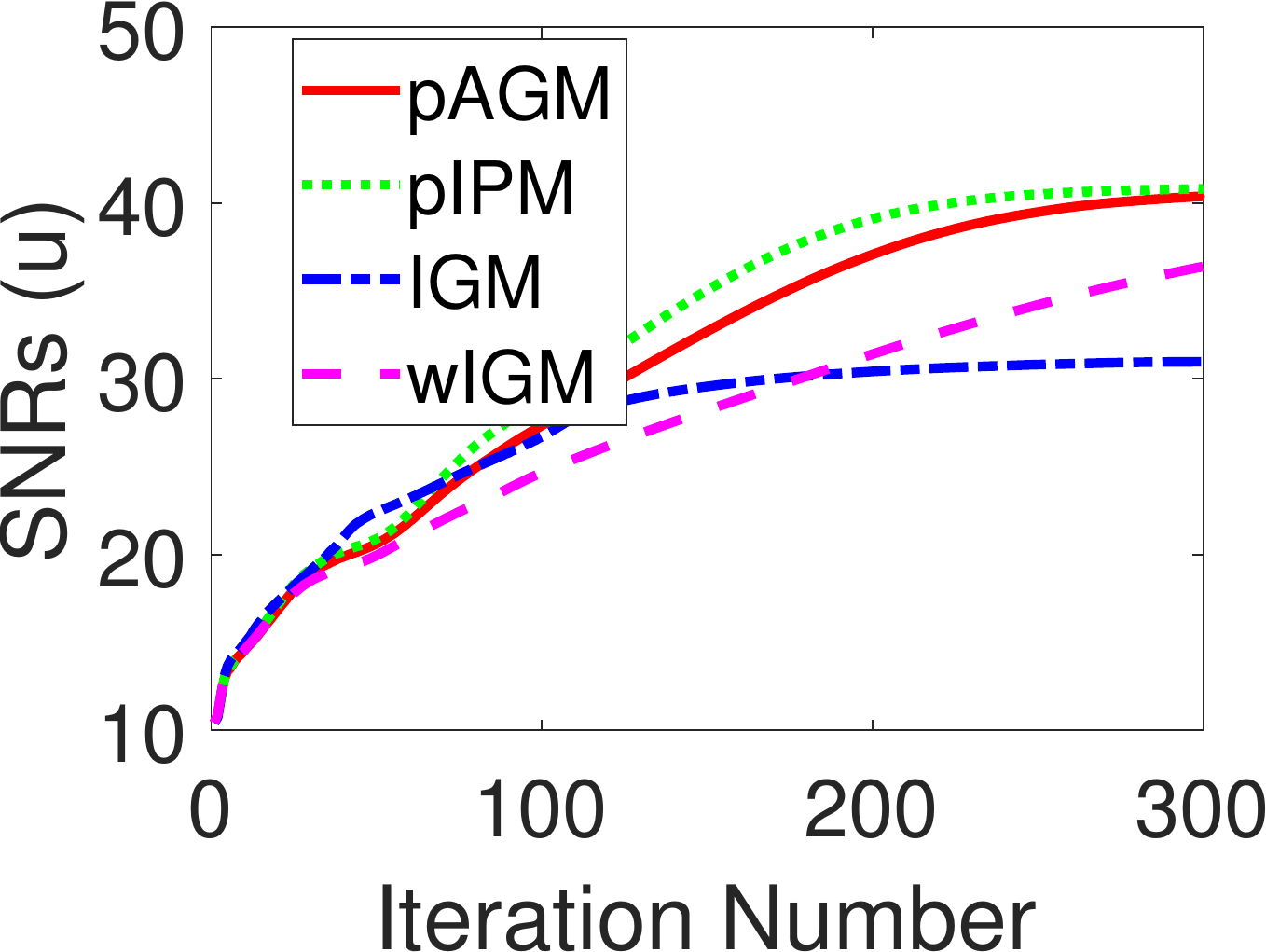}}
\subfigure[]{\includegraphics[width=.25\textwidth]{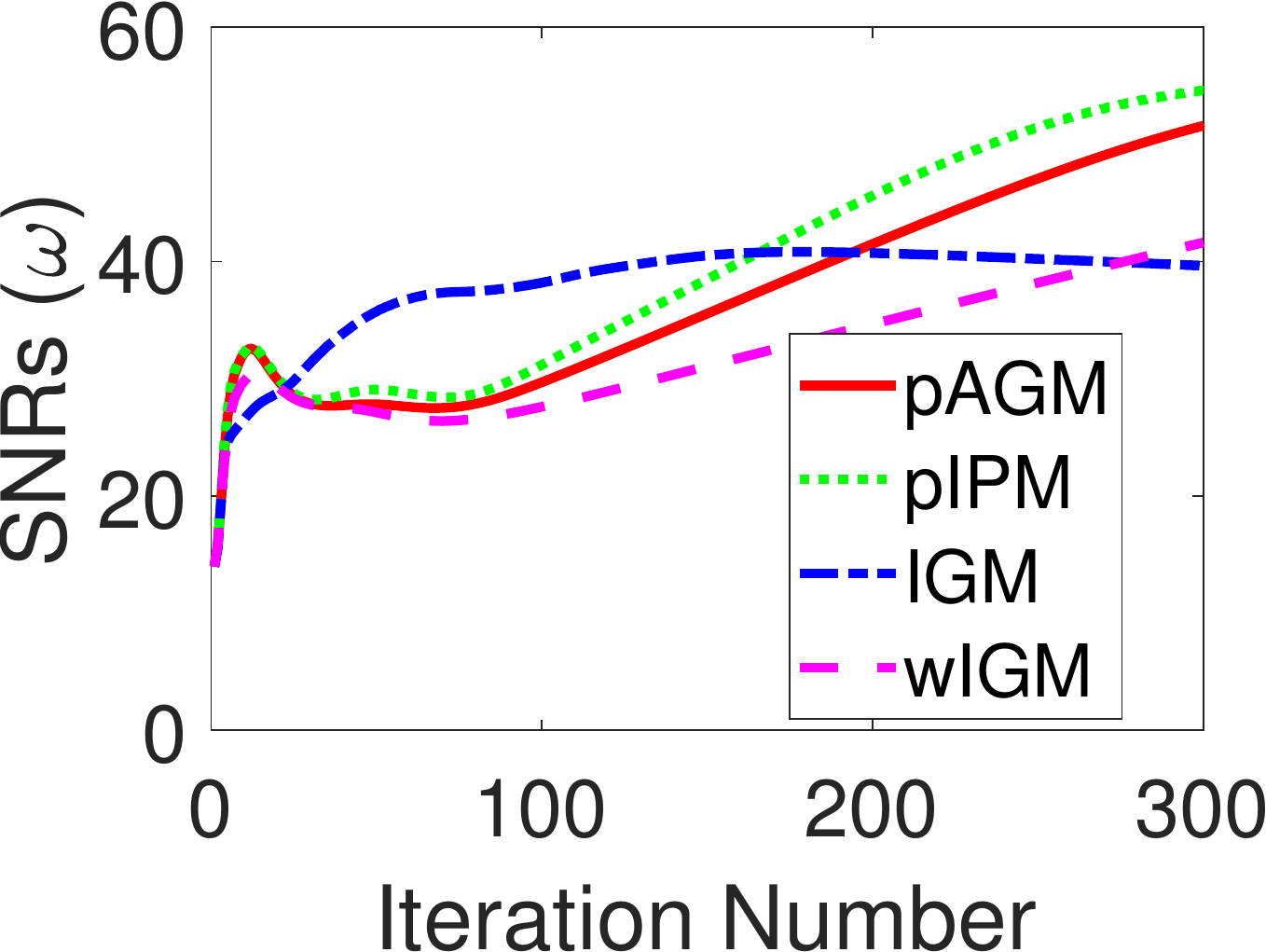}}
\end{center}
\caption{Convergence histories for Algorithm \ref{algADMM} scanning on random-lattice with different metrics.  Histories for R-factor (left), SNRs of the image (center) and probe (right). $D=16$. Noiseless data (top row) and Poisson noisy data with peak level $\eta=1$ (bottom row).  For the noiseless data: stopping condition as $\mathrm{R-factor}^k\leq 1.0\times 10^{-6}$ or iteration number reaches $Iter_{Max}=1000;$ $\beta=0.04, 0.1, 0.04$ for pAGM, pIPM and IGM, respectively, and  $\varepsilon=\beta=0.1$ for wIGM. For the noisy data: stopping condition  as iteration number reaches $Iter_{Max}=300;$  $\beta=0.2, 0.3, 100, 1$ for pAGM, pIPM, IGM and wIGM, respectively, and  $\varepsilon=10$ for wIGM.}
\label{fig8}
\end{figure}

In summary, when dealing with noiseless measurements, pAGM or pIPM based metrics are recommended in order to gain faster convergence speed. For Poisson noisy data,  pIPM should be used instead, in order to obtain higher quality reconstruction results. Due to the existence of noise, the recovered images seem blurry, which can be further improved by considering sparse prior information as in \cite{chang2016Total}. This study will be considered as future work for this research

\subsection{Performance of Algorithm \ref{algADMM-II}}
In this section only noiseless data is considered, and experiments are performed with periodical scanning on  square and hexagonal lattices.
The reconstruction results of Algorithm~\ref{algADMM-II} can be found in Figure~\ref{fig10}, and its converge histories are reported in Figure~\ref{fig11}.

With additional prior information of the probe,   Algorithm \ref{algADMM-II} reconstructs images with higher SNRs, Figure \ref{fig10} (b, d), in contrast with the results of Figure \ref{fig10} (a, c) by Algorithm \ref{algADMM}. We can see how  Algorithm \ref{algADMM-II} is able to completely remove the structural artifacts.
Figure~\ref{fig11} (a) shows how the R-factors of Algorithm~\ref{algADMM-II} decrease faster  compared with those from Algorithm \ref{algADMM}, and fewer iterations are needed to reach the given tolerance.
It is important to note that, as shown in Figure~\ref{fig11} (b, c), the SNRs of the reconstructed images and probes are greatly increased, which demonstrates the advantage of incorporating the prior information of the probe to the proposed algorithm.

\begin{figure}[h!]
\begin{center}
\subfigure[ALG1]{\includegraphics[width=.22\textwidth]{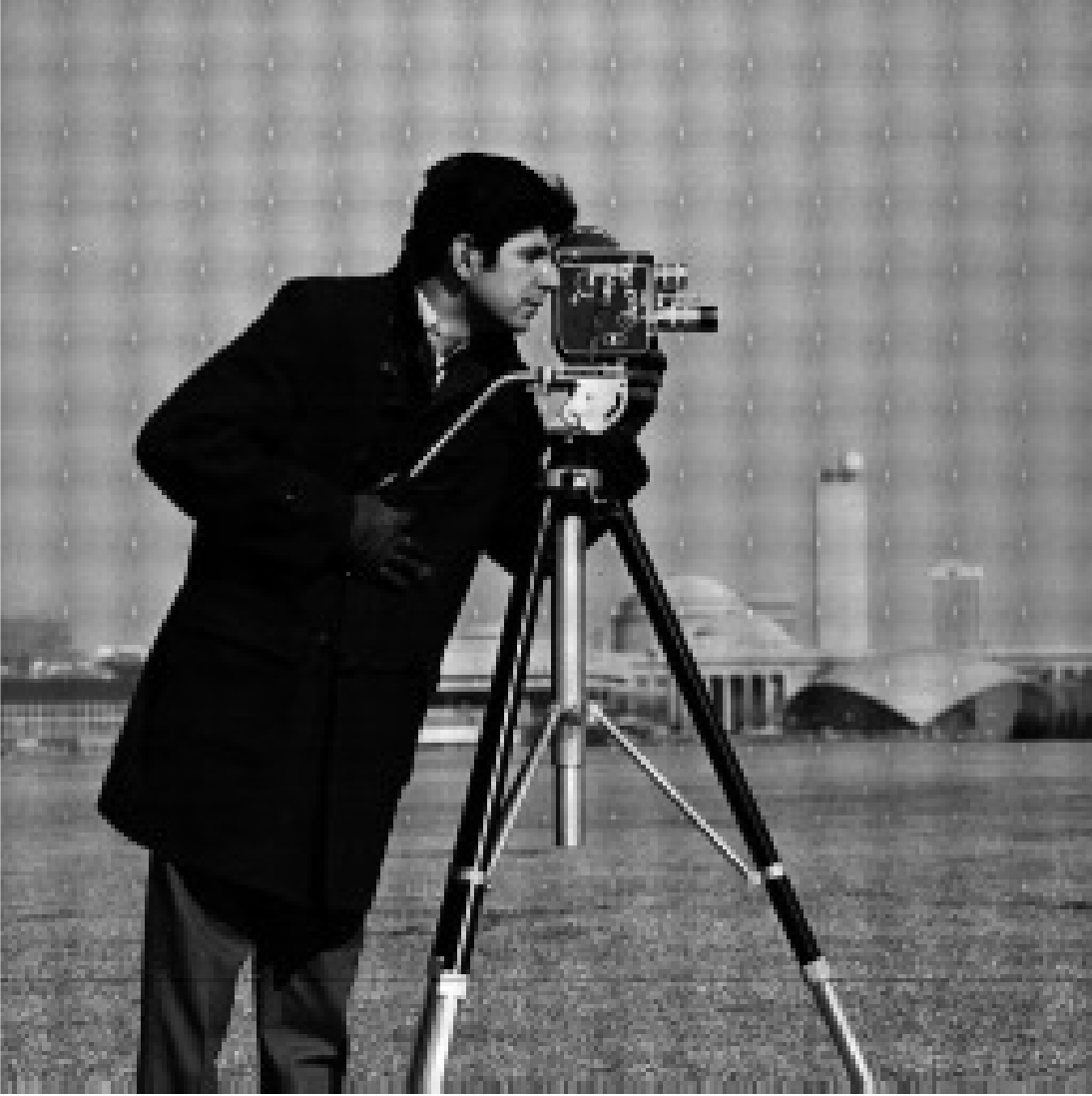}}
\subfigure[ALG2]{\includegraphics[width=.22\textwidth]{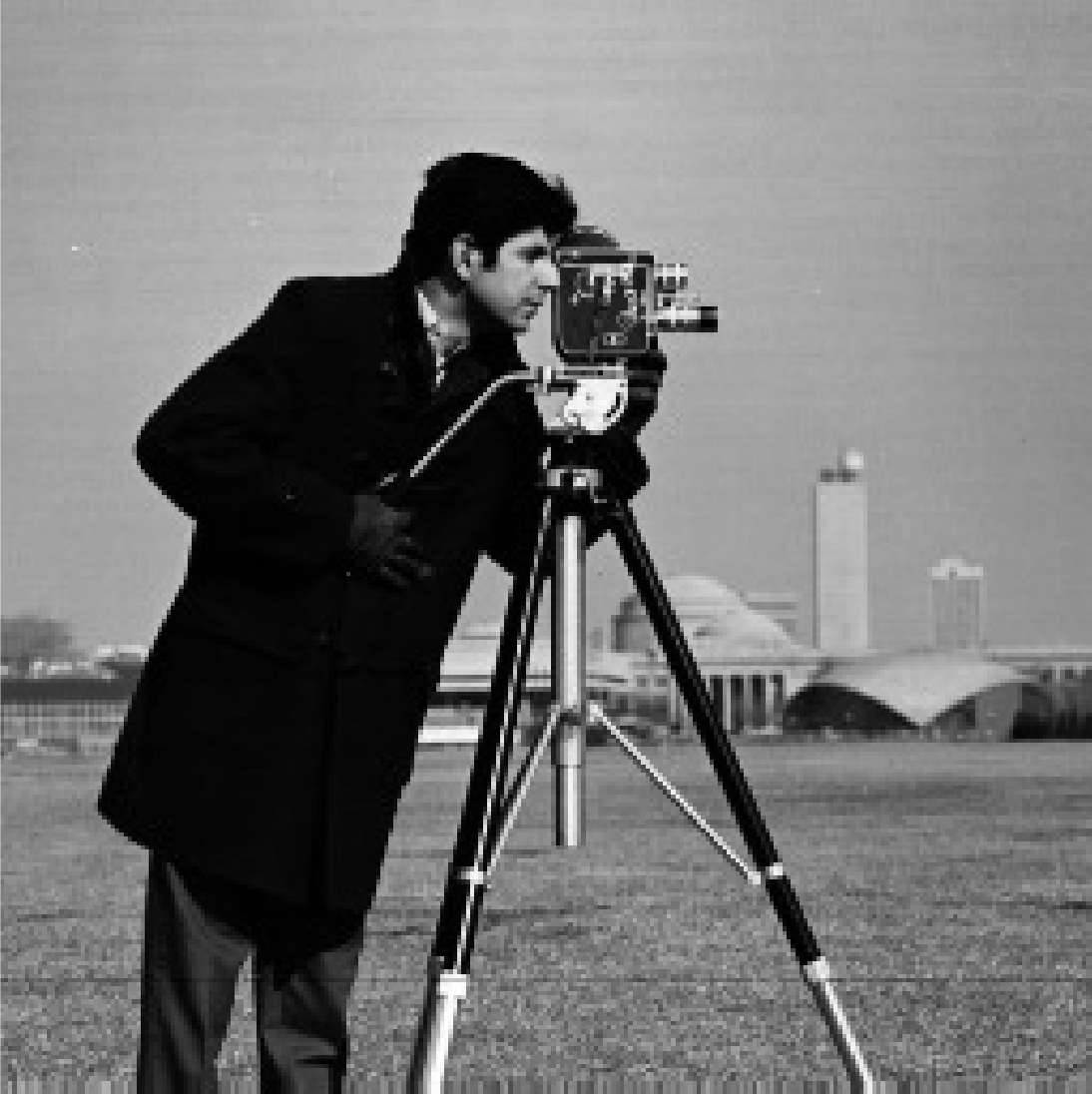}}
\qquad
\subfigure[ALG1]{\includegraphics[width=.22\textwidth]{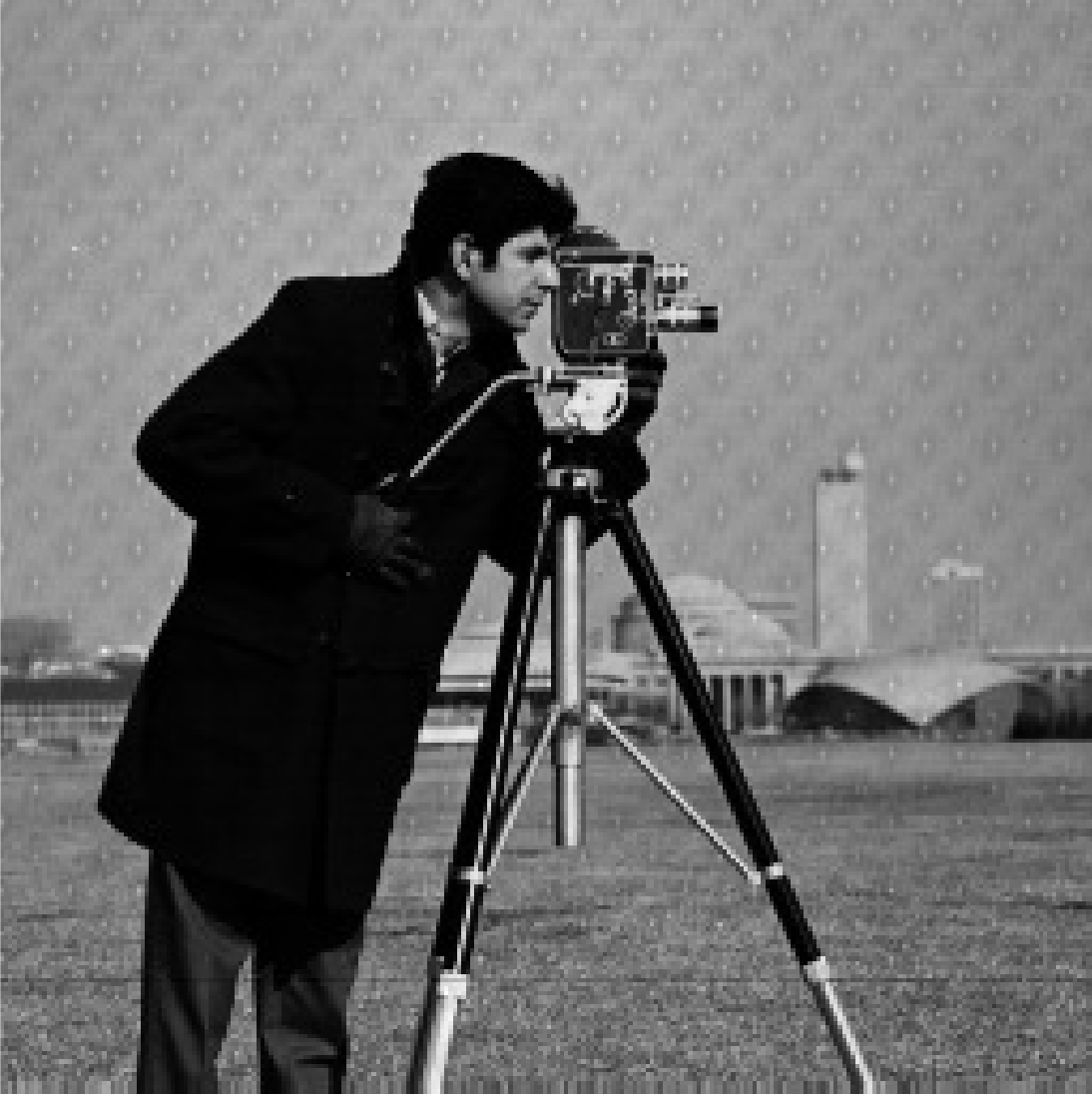}}
\subfigure[ALG2]{\includegraphics[width=.22\textwidth]{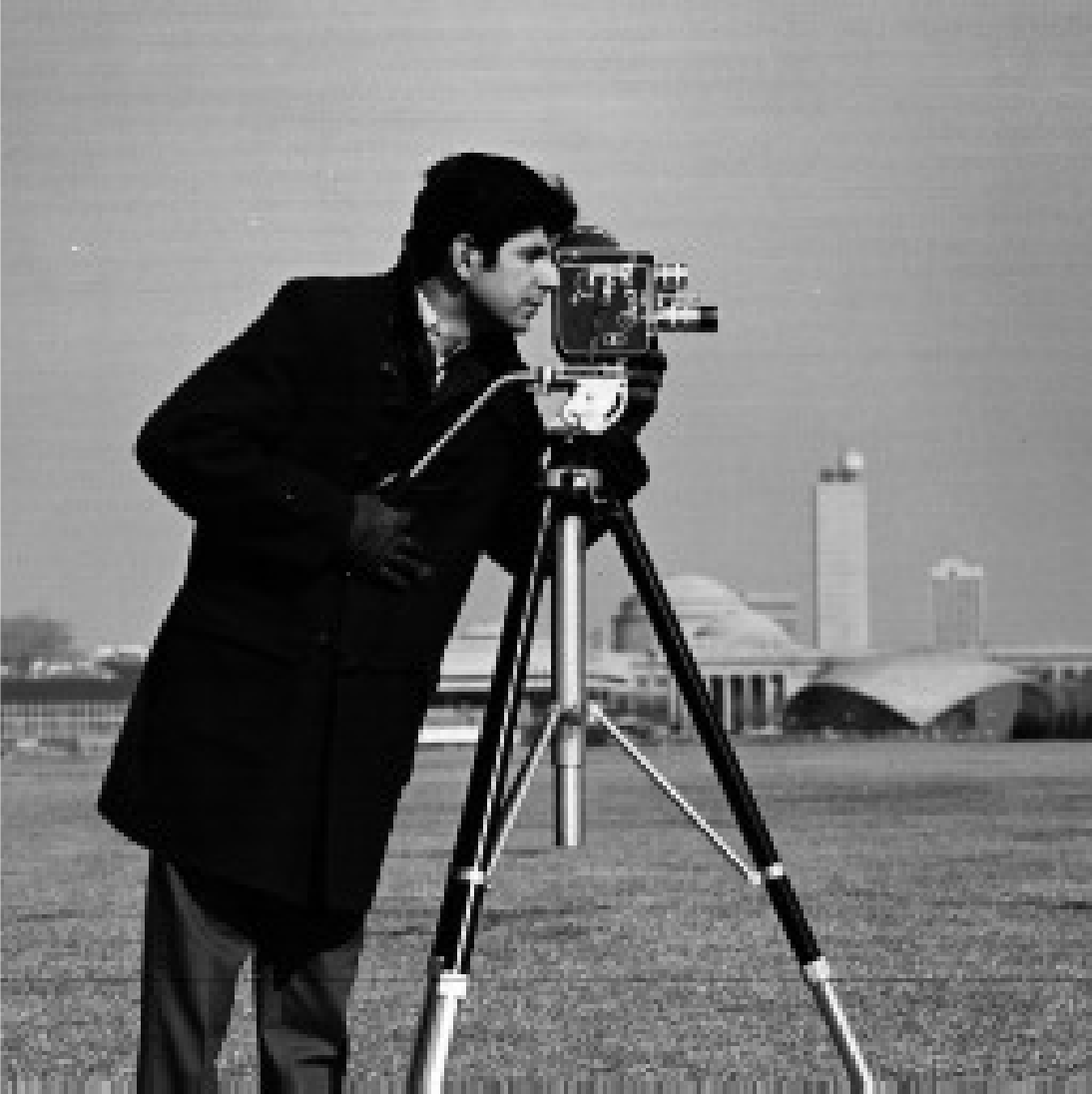}}
\end{center}
\caption{Performance of ALG1 (Algorithm \ref{algADMM}) in (a,c) and ALG2 (Algorithm \ref{algADMM-II}) in (b, d). (a, b) with square-lattice, and (c, d) with hexagonal-lattice. Algorithm \ref{algADMM-II} with AGM stops if  $\mathrm{R-factor}^k\leq 1.0\times 10^{-6}$ or if the iteration number reaches $Iter_{Max}=1000.$   $\tau=10, \beta_1=0.04, \beta_2=0.4$ for Algorithm \ref{algADMM-II}.  }
\label{fig10}
\end{figure}

\begin{figure}[]
\begin{center}
\subfigure[]{\includegraphics[width=.3\textwidth]{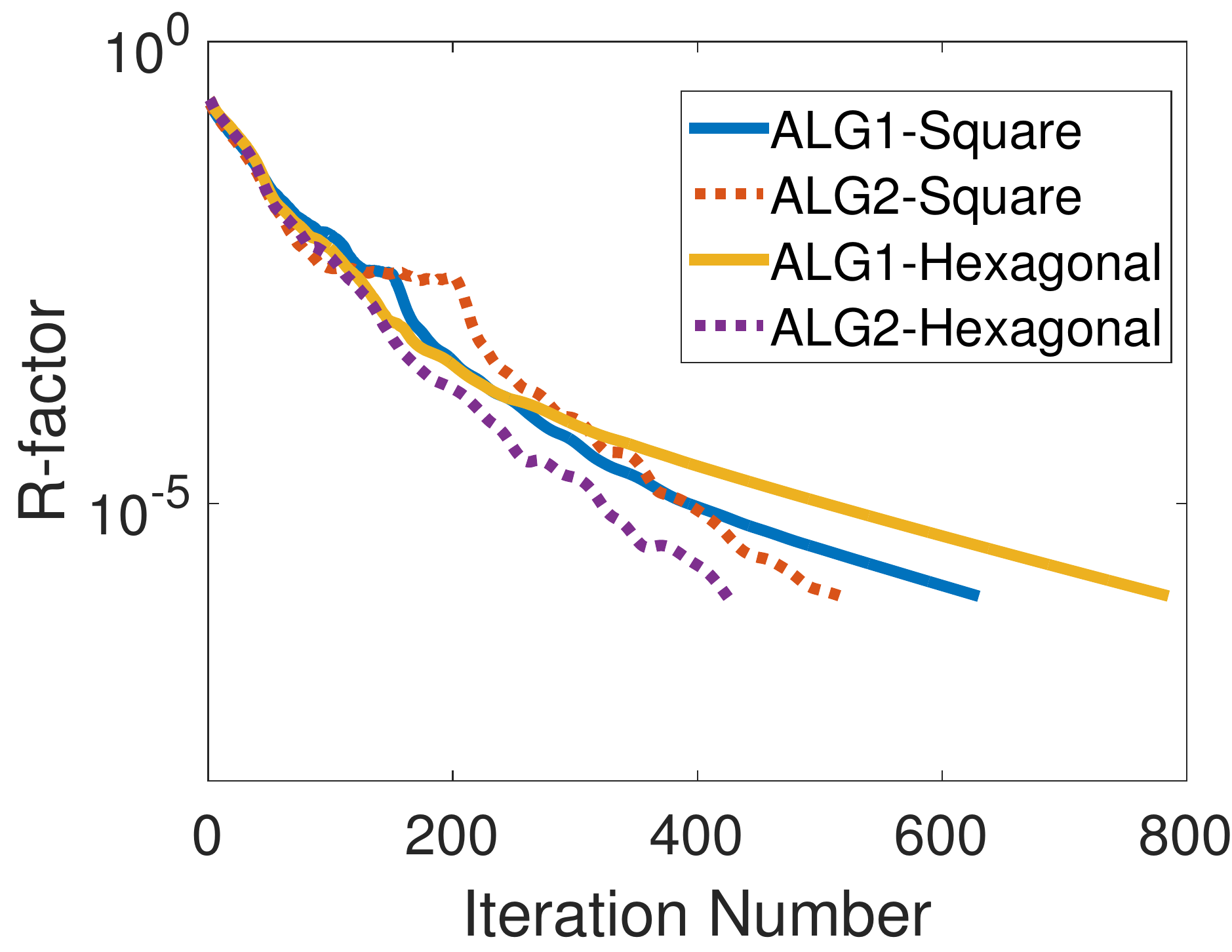}}
\subfigure[]{\includegraphics[width=.3\textwidth]{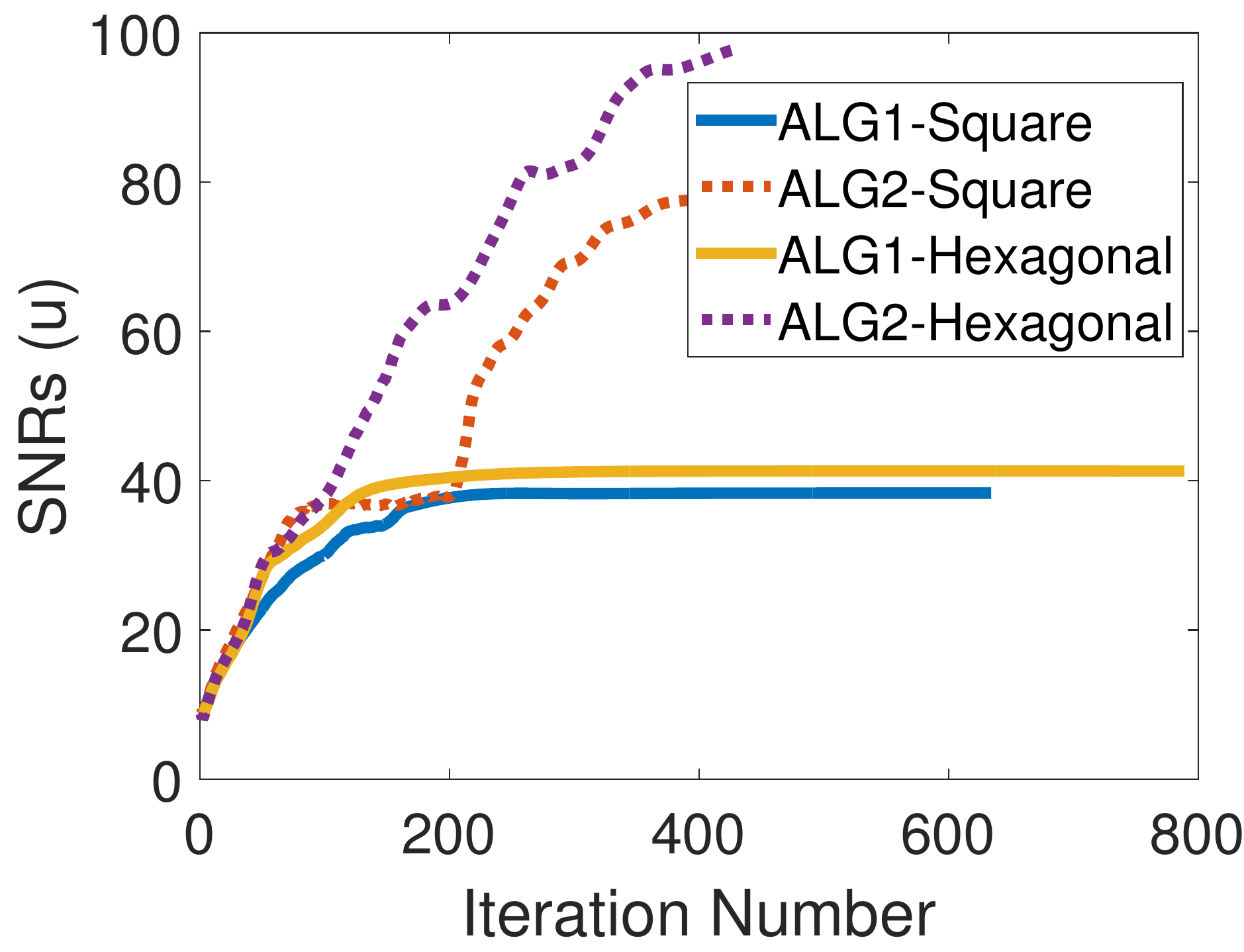}}
\subfigure[]{\includegraphics[width=.3\textwidth]{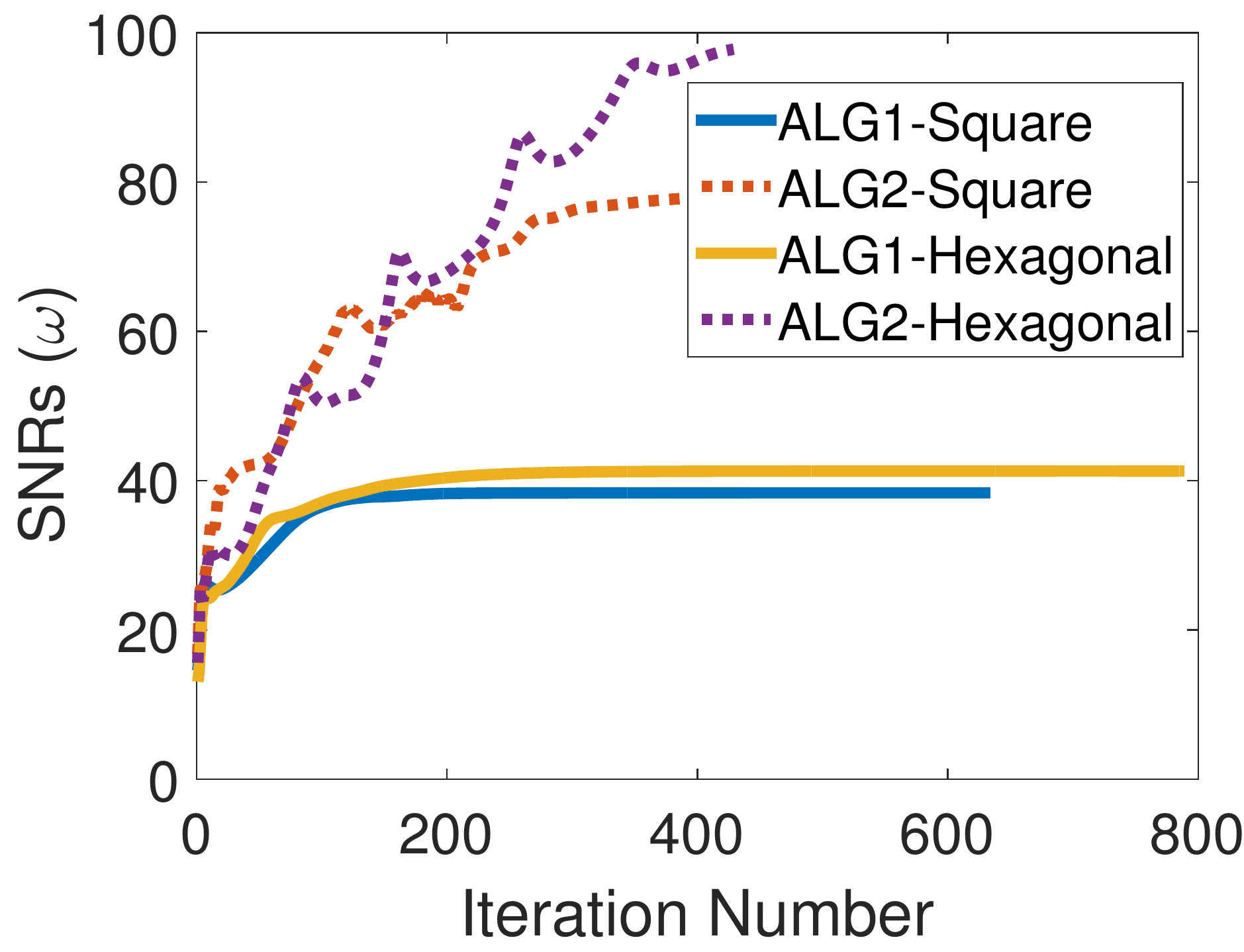}}
\end{center}
\caption{Convergence histories of ALG1 (Algorithm \ref{algADMM}) and ALG2 (Algorithm \ref{algADMM-II}). 
  Histories for R-factor (left), SNRs for the image (center) and probe (right). }
\label{fig11}
\vskip -.15in
\end{figure}


\subsection{GPU Acceleration}

We conduct this experiment using Algorithm \ref{algADMM} with different $D\in\{12,8,6,4,3\}$ to produce large-scale data, since the number of measurements $m$ satisfies
$m\approx \tfrac{n}{D^2}.$
Table~\ref{tab1} reports the corresponding execution times and GPU speedup ratios (SR) as $SR:={t_{CPU}}/{t_{GPU}}$. The GPU speedup ratios  range between $12\sim17$. Considering that the GPU \textsc{Matlab} implementation has not been optimized, the proposed algorithm shows great potential for large-scale BP-PR problems.
The number of iterations ($IterNum$) to reach the given tolerance are  about $300\sim340$ with different $D$, which demonstrates that the convergence speed is not sensitive to the number of measurements for a large-scale problem. Although it is convenient to produce more measurements experimentally, with our algorithms, using fewer number of measurements can save memory and reduce the computation cost.  One can also observe the computation cost for  CPU (or GPU) is almost linear \emph{w.r.t.} the number of measurements $m$, which is consistent with the runtime complexity  of Algorithm \ref{algADMM} as about $O(m(\log_2\bar m+\tfrac{n}{\bar m}) \times IterNum)$.

\begin{table}[h!]
\caption{Performances of Algorithm \ref{algADMM} on GPU v.s. CPU. $IterNum$ denotes the iteration number to reach the given tolerance, $t_{CPU}$ and $t_{GPU}$ denote the elapsed time in seconds for CPU and GPU, respectively. SR denotes the speedup ratio.    Algorithm \ref{algADMM} stops if the R-factor is less than $1\times 10^{-5}$ or if the maximum iteration number reaches $1000$.  The code runs on a desktop with  CPU (Intel I7-4820K, 64GB RAM) and GPU (Nvidia TITAN-X with 12 GB GDDR5X) in single precision mode. The GPU version is directly implemented using the build-in functions of \textsc{Matlab}.
}
\begin{center}{}
\scalebox{.9}{
\begin{tabular}{|c|c|c|c|c|c|c|c|}
\hline
$D$&12&8&6&4&3\\
\hline
${m}$&$64^2\times 441$&$64^2\times 1024$&$64^2\times 1764 $&$64^2\times 4096 $&$64^2\times
7225$\\
\hline
$IterNum$& 297 &332 &296&334&336\\
\hline
$t_{CPU}$  (s) &46.9&97.1&188.1&373.4&773.7\\
$t_{GPU}$ (s)&4.0&7.8&10.9&26.4&46.1 \\
\hline
SR  & 11.7& 12.4&17.2&14.1&16.8\\
\hline
\end{tabular}
}
\end{center}
\label{tab1}
\end{table}
\subsection{Extensive tests: different probe and images}
{
In order to test the robustness of proposed algorithms, we conduct more experiments on a different probe (non-circularly symmetric) with its absolute value shown in Figure \ref{fig12-0} (c).
Two other images shown in Figure \ref{fig12-0} (a)-(b) are considered with scanning on square-lattice and random-lattice. The convergence curves of the R-factor,
SNRs of recovered images and probes are reported in Figure \ref{fig12}, where one can readily see that the R-factors by proposed algorithms decrease faster, and
the recovery results have better quality (bigger SNRs). Especially the test reported in the first column of Figure \ref{fig12}  shows that ADMM-Prox, with the  help of additional proximal terms, becomes more stable and faster compared with standard ADMM.
}

\begin{figure}[]
\begin{center}
\subfigure[]{\includegraphics[width=.24\textwidth]{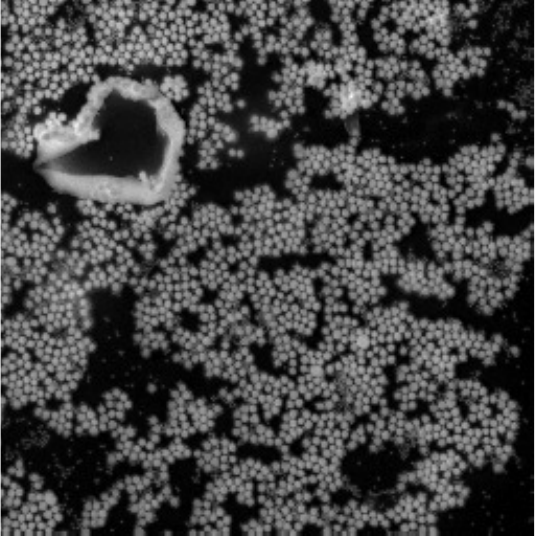}}
\subfigure[]{\includegraphics[width=.24\textwidth]{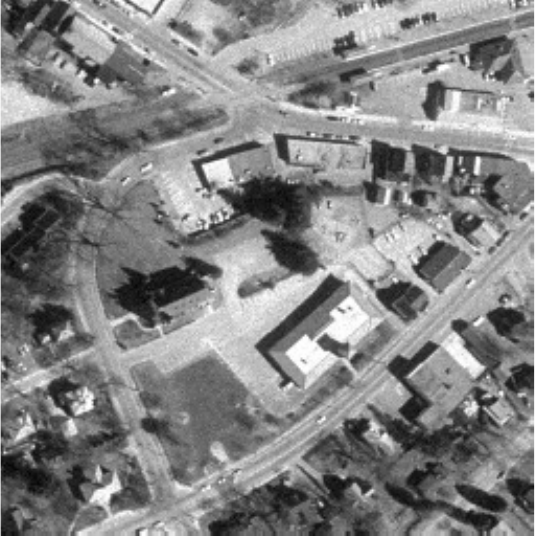}}\qquad\qquad
\subfigure[]{\includegraphics[width=.12\textwidth]{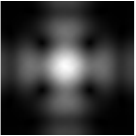}}
\end{center}
\caption{Two testing images ($256\times 256$ pixels): A complex-valued image with  the absolute value in (a), and a real-valued image in (b); A  non-circularly symmetric probe in (c) with $64\times 64$ pixels (only show the absolute value)  }
\label{fig12-0}
\end{figure}

\begin{figure}[]
\begin{center}
\subfigure[]{\includegraphics[width=.24\textwidth]{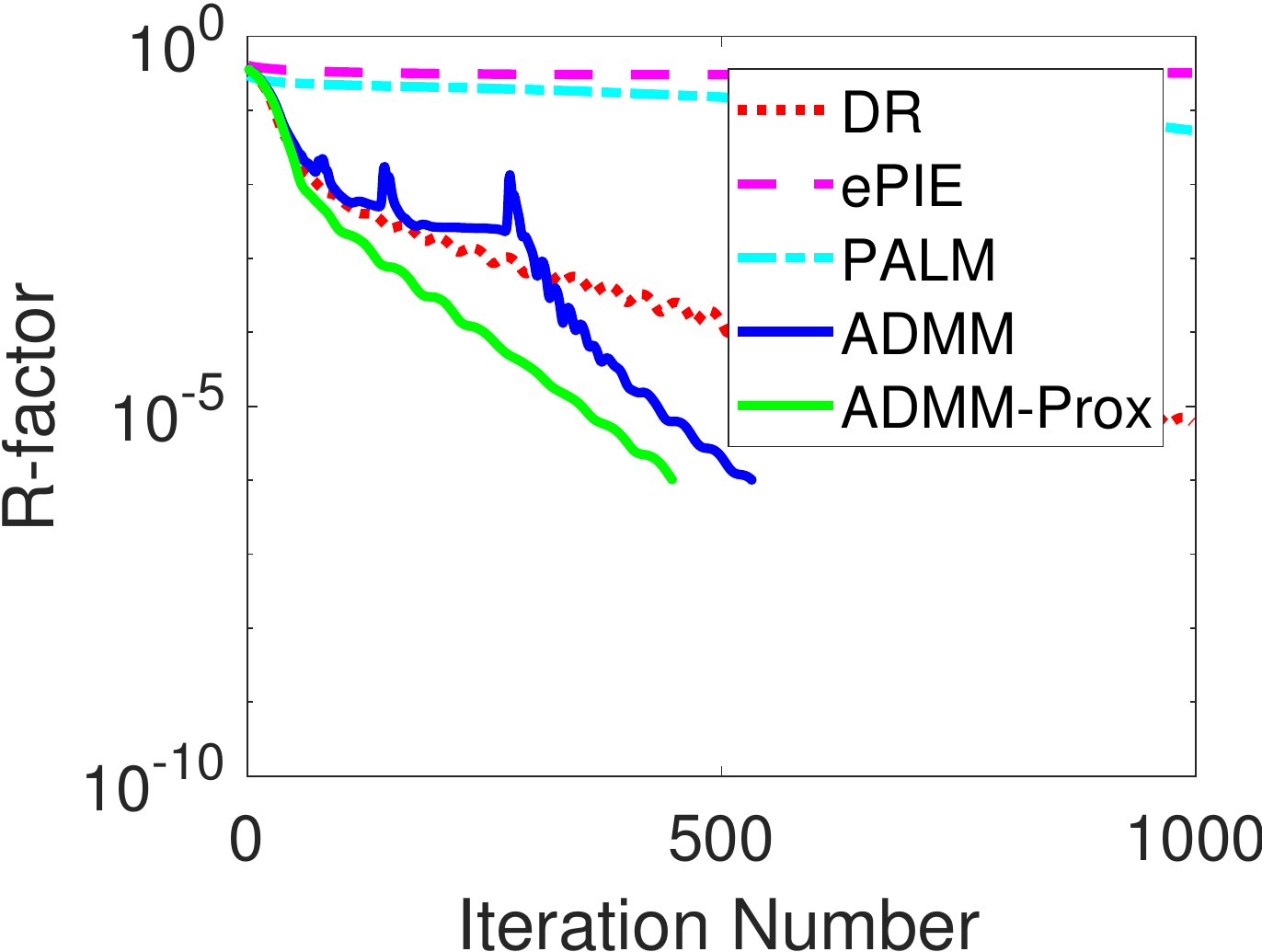}}
\subfigure[]{\includegraphics[width=.24\textwidth]{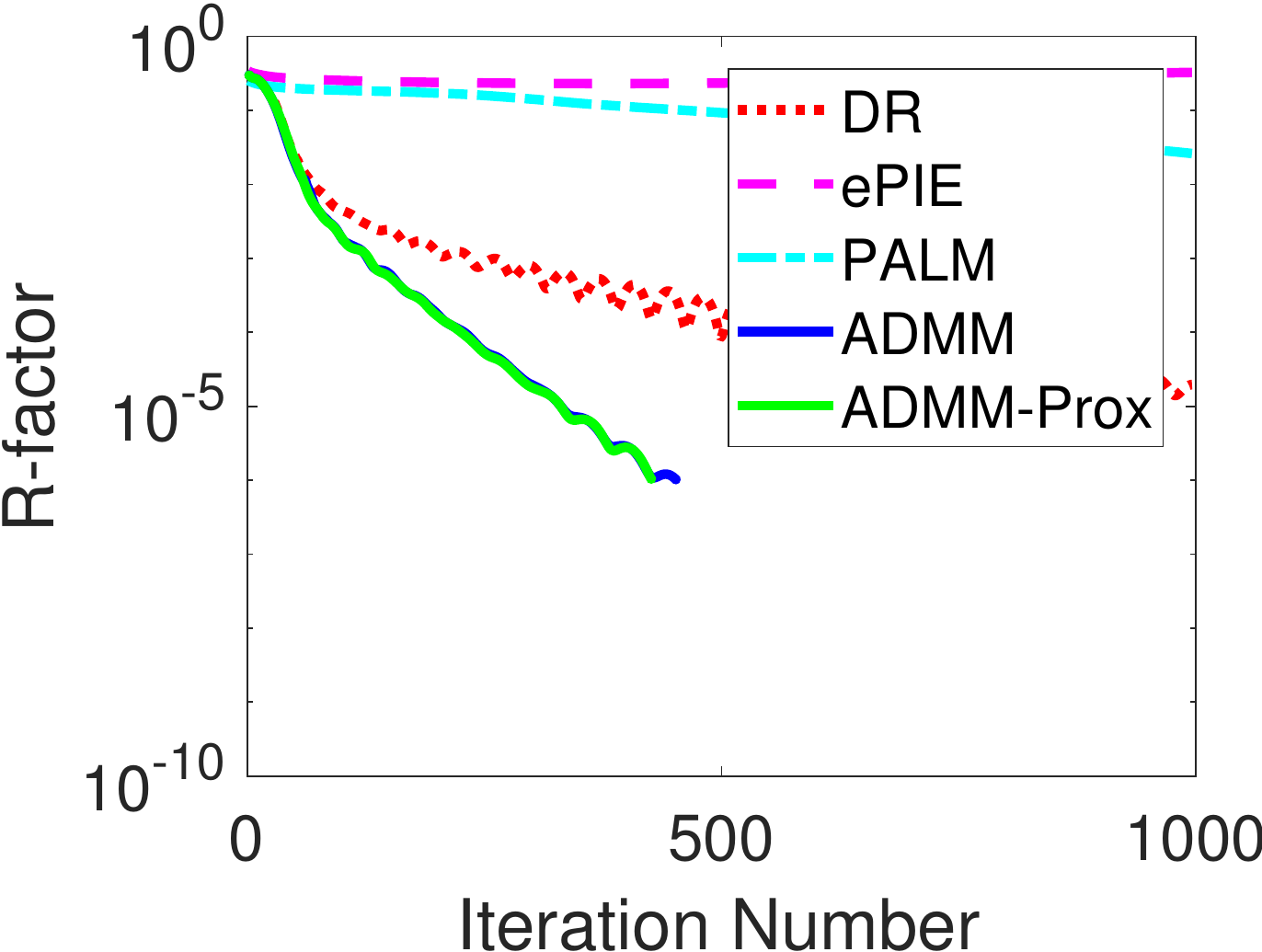}}
\subfigure[]{\includegraphics[width=.24\textwidth]{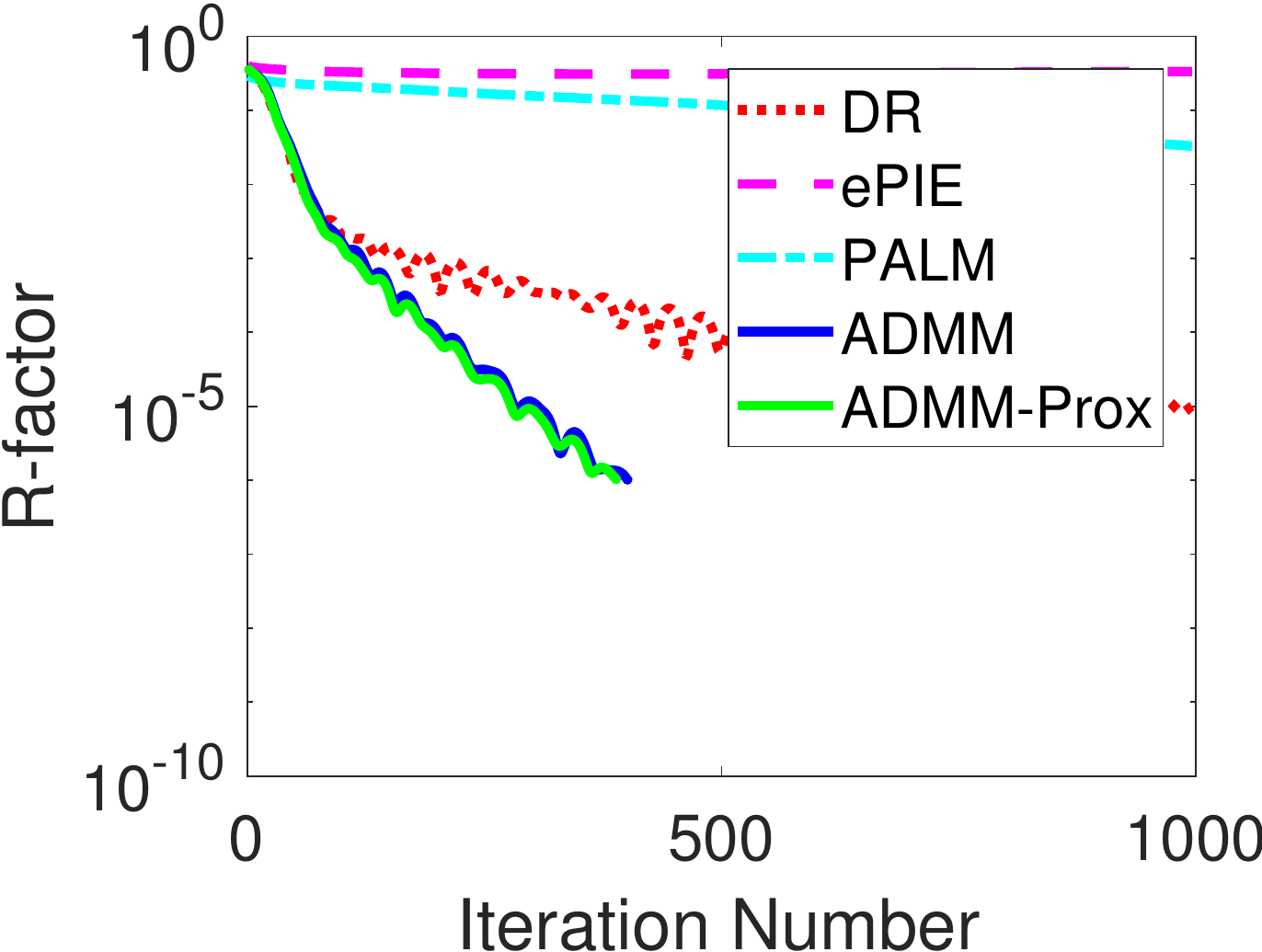}}
\subfigure[]{\includegraphics[width=.24\textwidth]{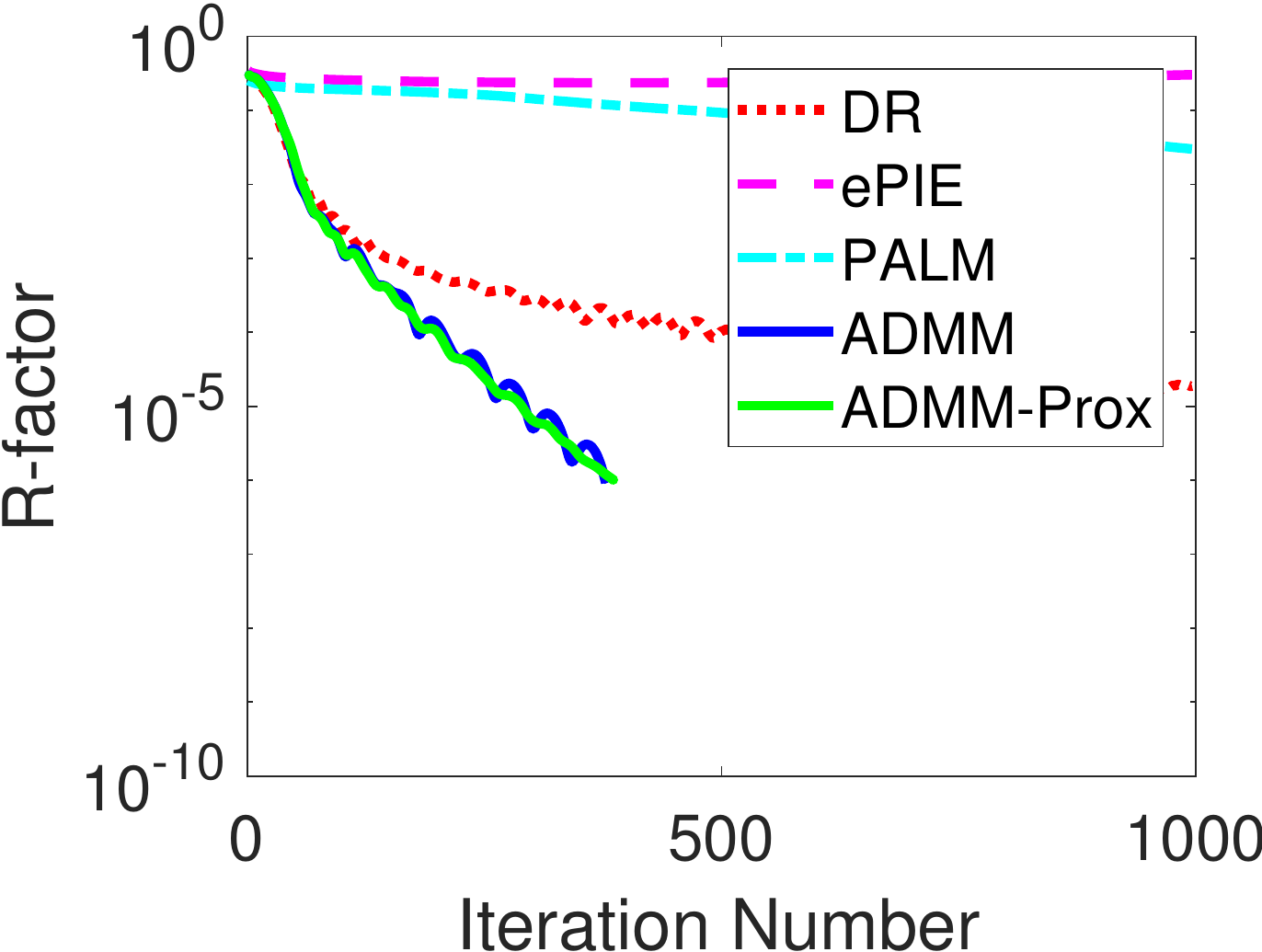}}\\
\subfigure[]{\includegraphics[width=.24\textwidth]{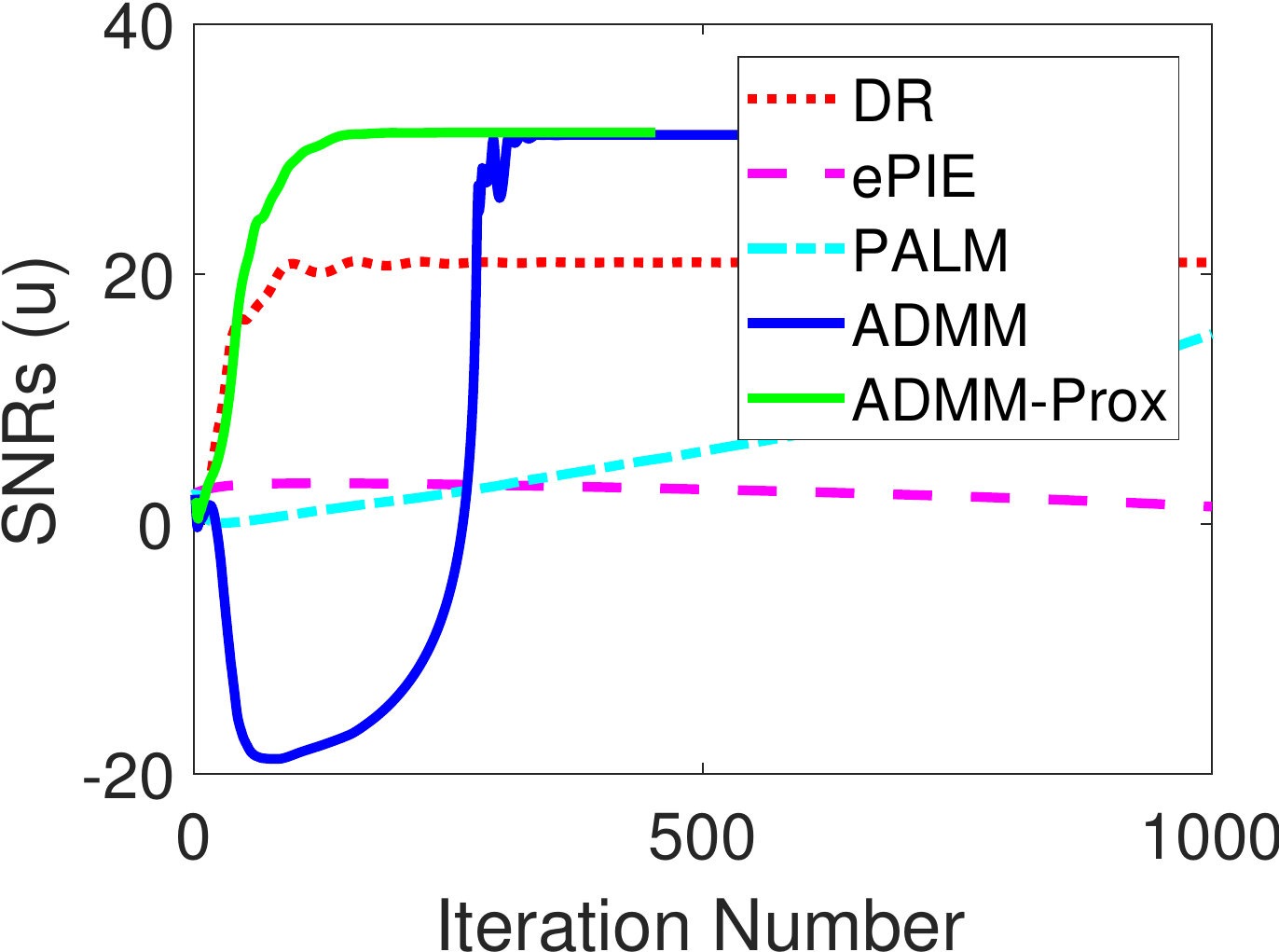}}
\subfigure[]{\includegraphics[width=.24\textwidth]{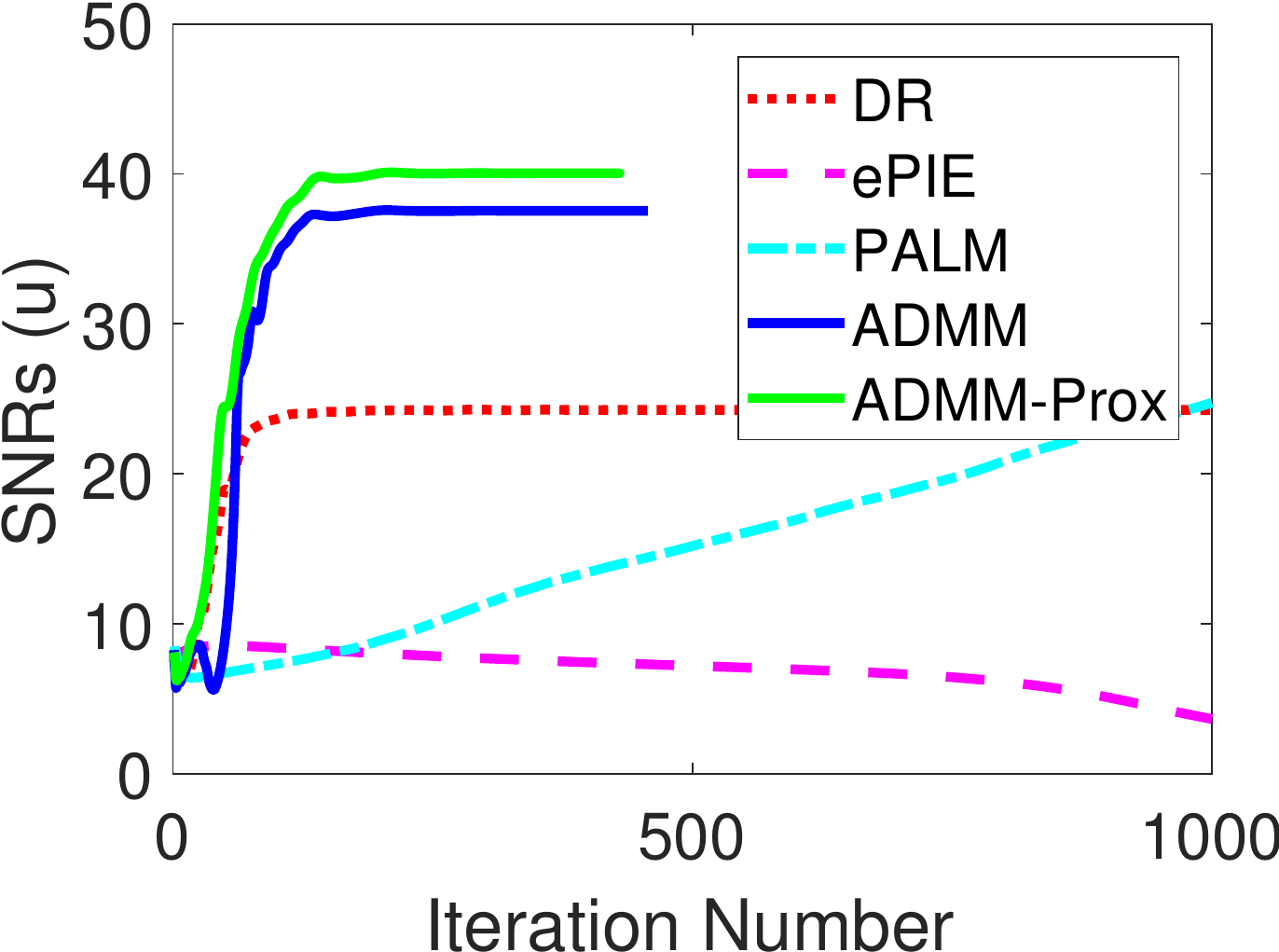}}
\subfigure[]{\includegraphics[width=.24\textwidth]{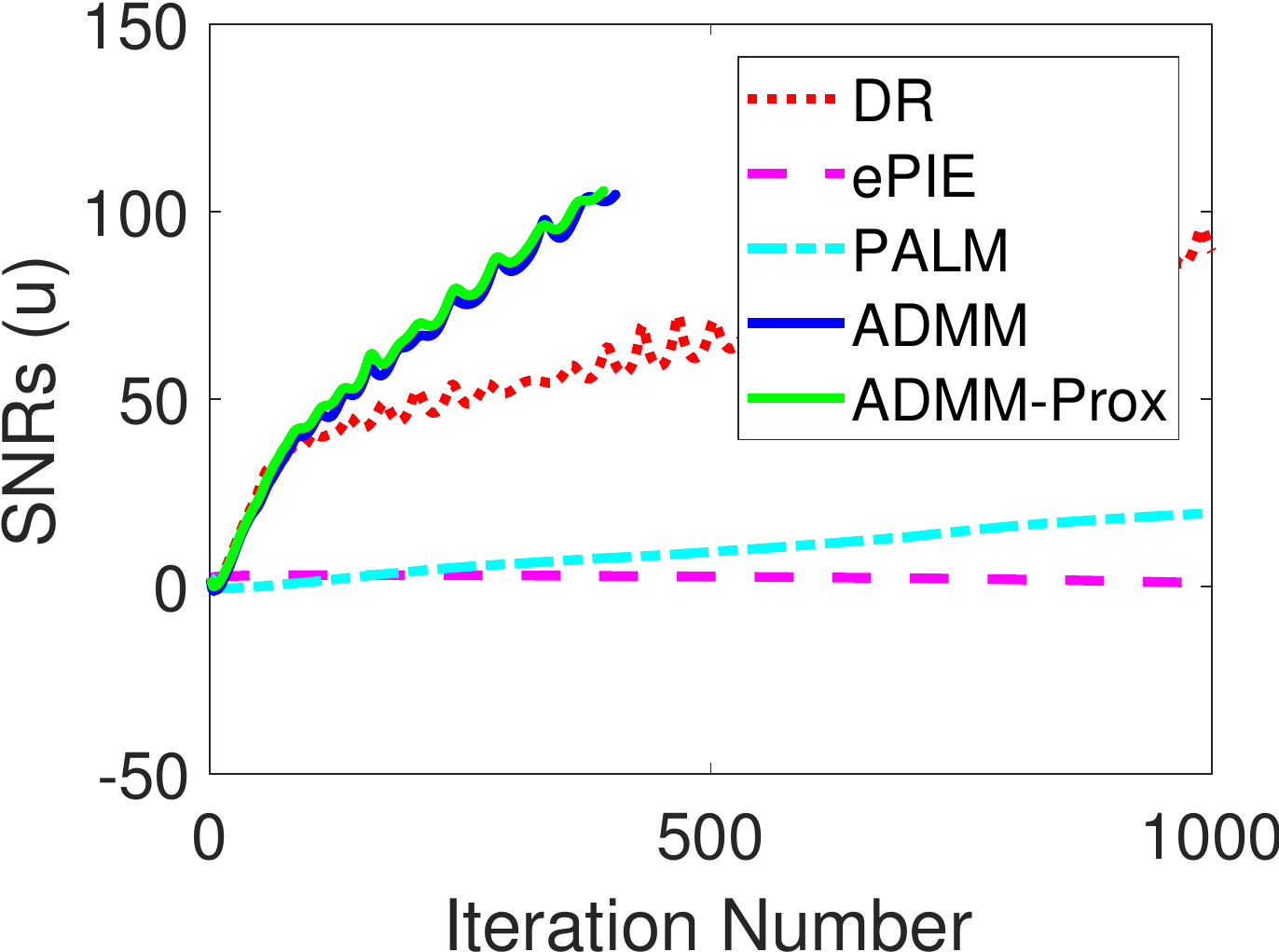}}
\subfigure[]{\includegraphics[width=.24\textwidth]{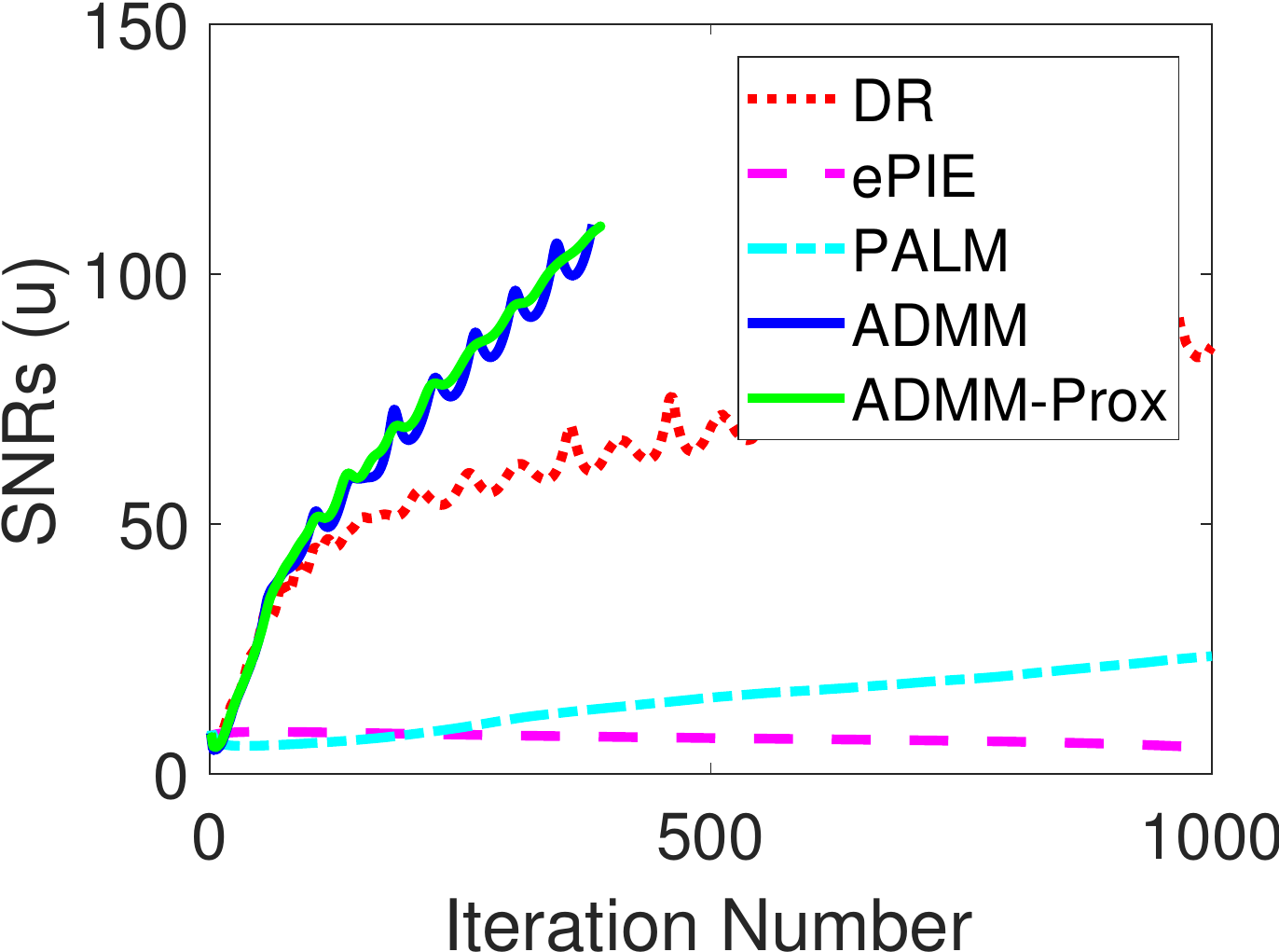}}\\
\subfigure[]{\includegraphics[width=.24\textwidth]{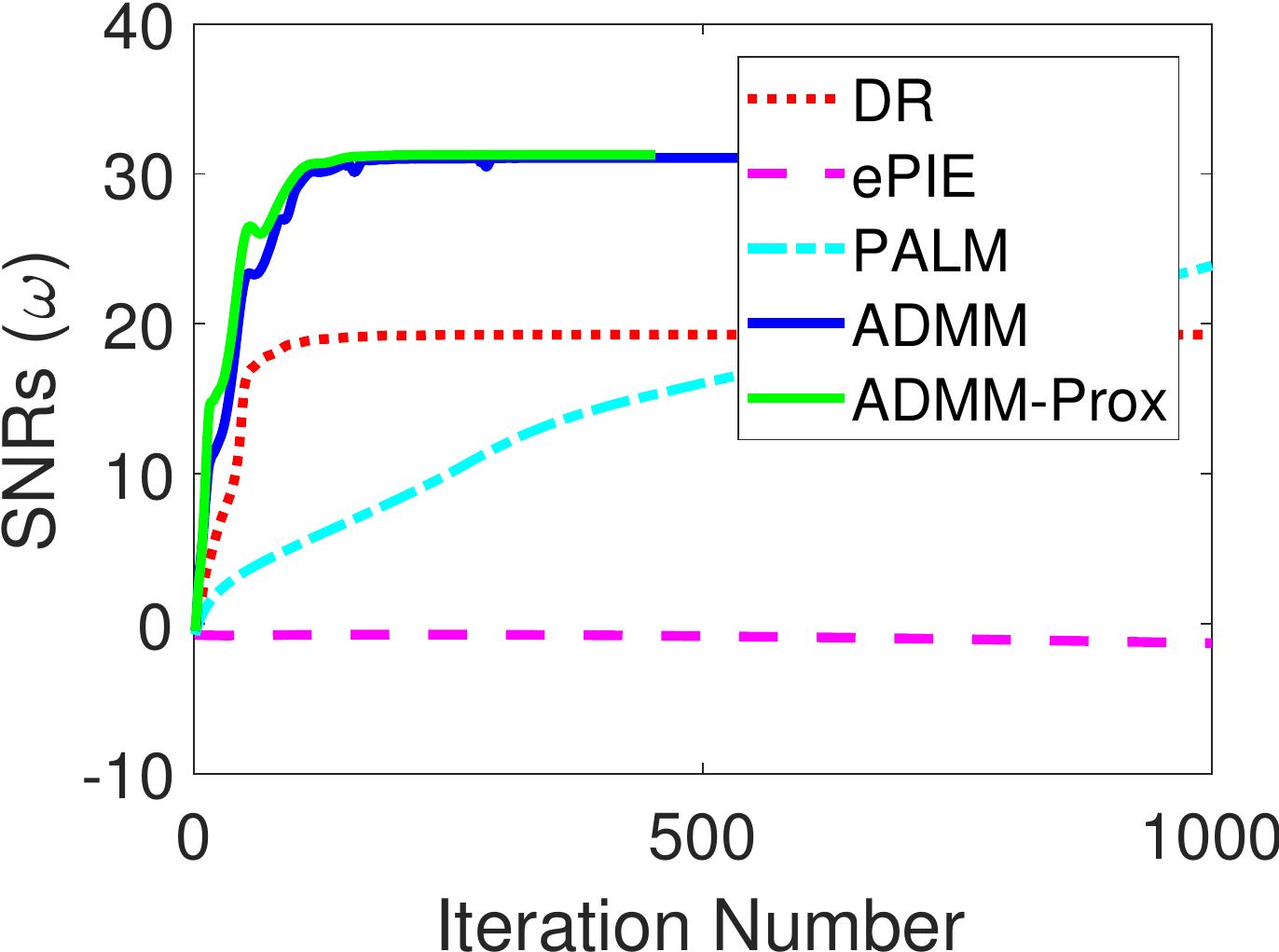}}
\subfigure[]{\includegraphics[width=.24\textwidth]{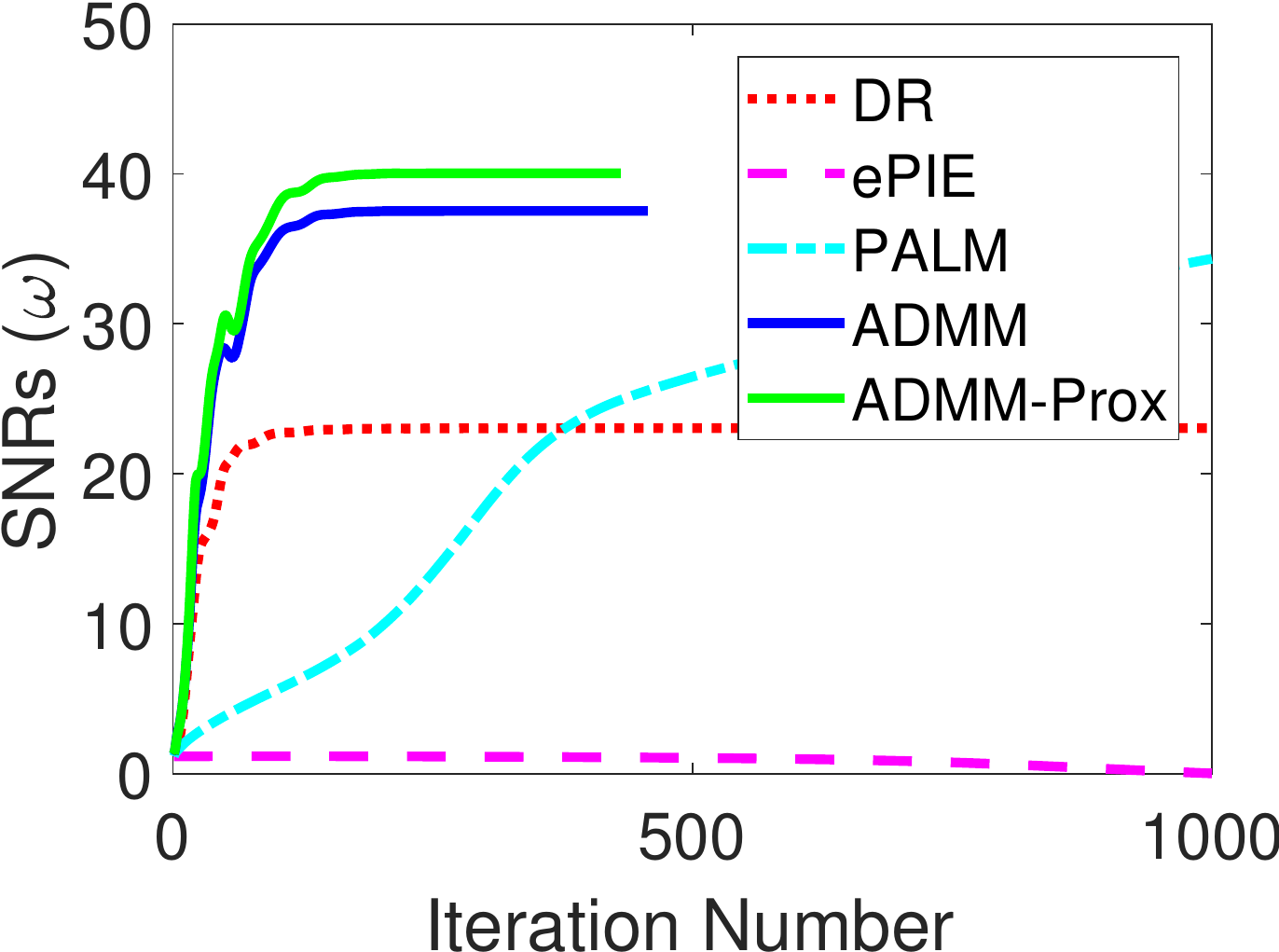}}
\subfigure[]{\includegraphics[width=.24\textwidth]{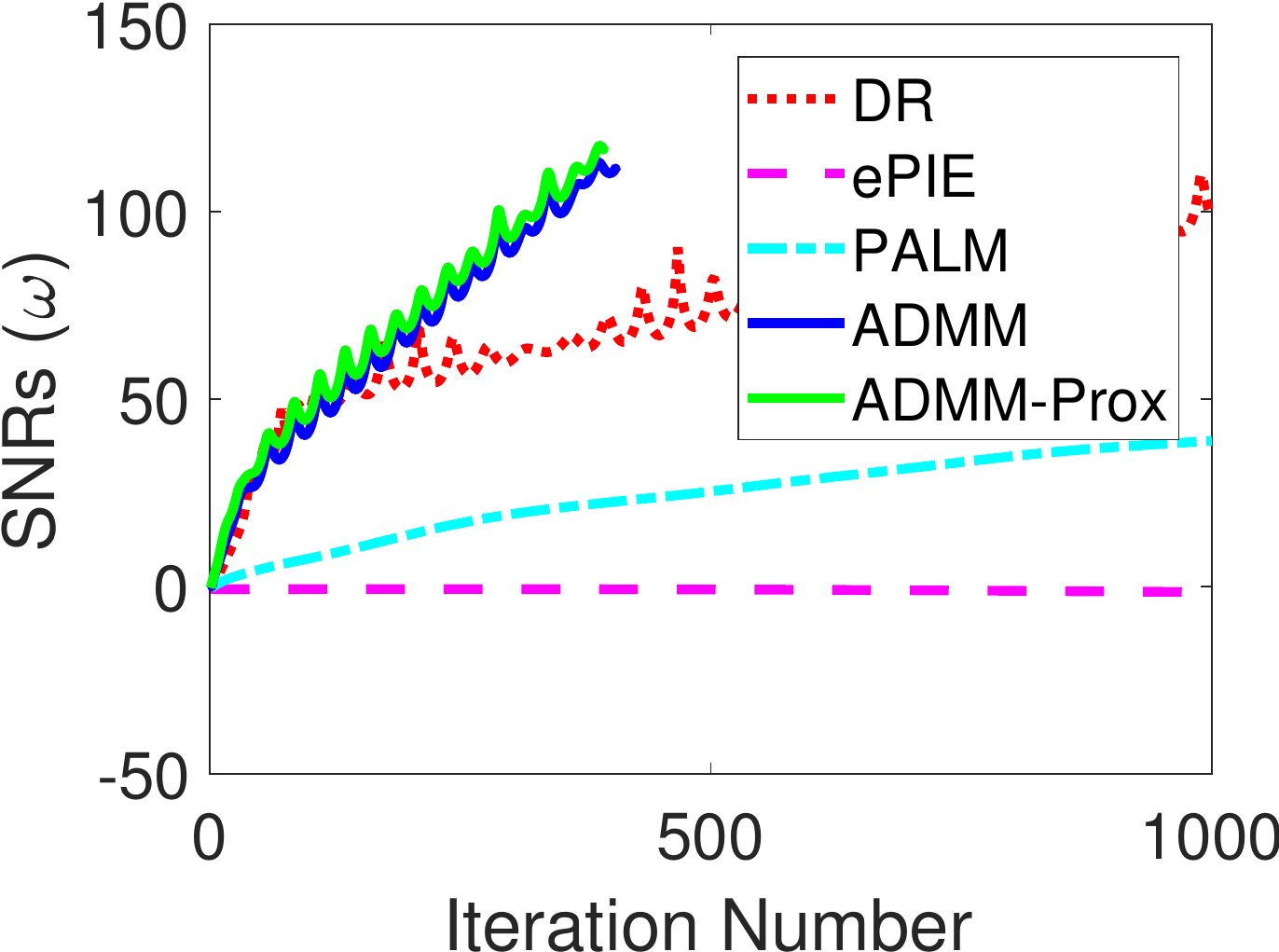}}
\subfigure[]{\includegraphics[width=.24\textwidth]{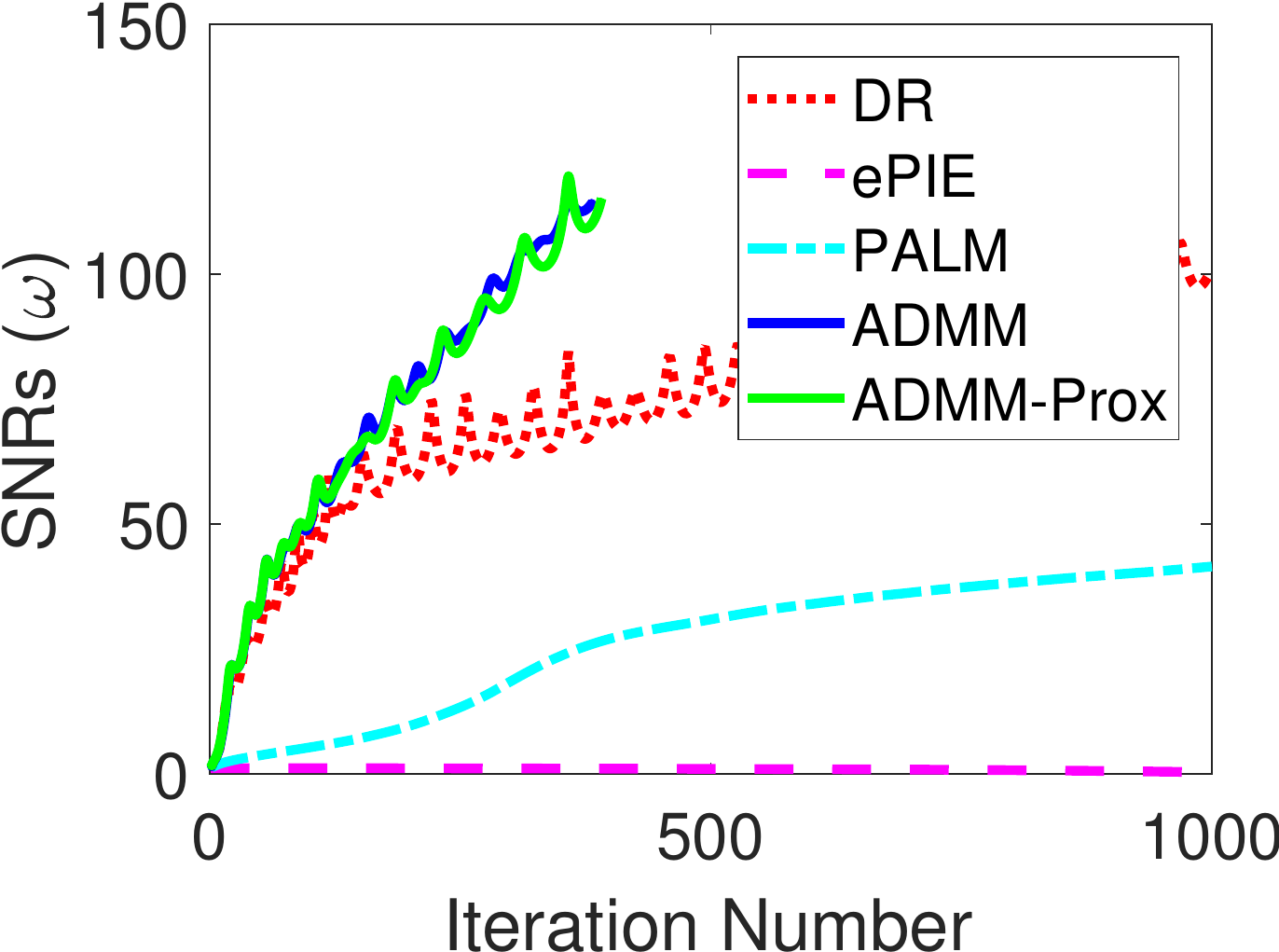}}
\end{center}
\vskip -.1in
\caption{ Convergence histories of R-factor, SNRs of  images and probes from up to down, respectively by ePIE \cite{maiden2009improved}, DR \cite{thibault2009probe},  PALM \cite{hesse2015proximal} and ADMM (Algorithm \ref{algADMM}), with $D=24$ and scanning on  square-lattice  in 1st and 2nd columns and random-lattice in 3rd and 4th columns. All compared algorithms stop if  $\mathrm{R-factor}^k\leq 1.0\times 10^{-6}$ or iteration number reaches $1000.$  Results for the complex-valued image (Figure \ref{fig12-0} (a)) are put in the 1st, 3rd columns, and for the real-valued image (Figure \ref{fig12-0}(b)) in other columns.}
\label{fig12}
\end{figure}

\vskip .1in
\section{Conclusions}\label{sec6}
In this paper we propose different nonlinear optimization models with and without prior information of the probe for the blind phase retrieval ptychography problem. Efficient generalized ADMMs are designed to  solve  the proposed models, and numerous numerical experiments demonstrate the superior performance  of the proposed algorithms in both speed and reconstruction quality compared with the state-of-the-art algorithms. Especially, the performance on GPU reported in this paper have motivated the development of a high performance \texttt{C++} multi-GPU implementation of the ADMM algorithm proposed in this paper. The multi-GPU implementation of ADMM  was reported \cite{enfedaque2018gpu} to process more than 3 million measured samples per second on a single GPU, and it has already been installed and being used
 on the microscopes of the Advanced Light Source (ALS), at Lawrence Berkeley National Laboratory for real-time analysis \cite{daurer2017nanosurveyor} and partial coherence analysis \cite{chang2018partially}.




\section*{Acknowledgement}
We would like to thank the two reviewers, and the associate editor for their valuable comments,  which help to improve the paper greatly. We would also like to thank Professor Wotao Yin in University of California, Los Angeles, for bringing to our attention some important references and further inspiring discussion.

This work of the first author  was partially supported by National Natural Science Foundation of China (Nos.11871372, 11501413), Natural Science
Foundation of Tianjin (No.18JCYBJC16600), 2017-Outstanding Young Innovation Team Cultivation Program (No.043-135202TD1703) and Innovation Project (No.043-135202XC1605) of Tianjin Normal University, Tianjin Young Backbone of Innovative Personnel Training Program and Program for Innovative Research Team in Universities of Tianjin (No.TD13-5078). This work was partially funded by the Center for Applied Mathematics for Energy Research Applications, a joint ASCR-BES funded project within the Office of Science, US Department of Energy, under contract number DOE-DE-AC03-76SF00098.

\bibliographystyle{siam}
\bibliography{rD}

\appendix
\section{PALM or BCD for Model I in \eqref{eqModel}}\label{apdx-0}


 Iterative scheme for PALM or BCD in the $(k+1)^{\text{th}}$ iteration is given below:
 \[
 \left\{
 \begin{split}
 &\omega^{k+1}=\arg\min_\omega \mathbb I_{\mathscr X_1}(\omega)+\tfrac{1}{2\tau^k_1}\|\omega-(\omega^k-\tau^k_1\nabla_\omega \mathcal G(\mathcal A(\omega^k,u^k)))\|^2\\
  &u^{k+1}=\arg\min_u \mathbb I_{\mathscr X_2}(u)+\tfrac{1}{2\tau^k_2}\|u-(u^k-\tau^k_2\nabla_u \mathcal G(\mathcal A(\omega^{k+1},u^k)))\|^2,
 \end{split}
 \right.
 \]
with stepsizes $\tau^k_1, \tau^k_2$,
 where
 \[
 \begin{split}
 &\nabla_\omega\mathcal G(\mathcal A(\omega,u))=\sum_j (\mathcal S_j u^*)\circ\mathcal F^*\big(\nabla_{z_j} \mathcal G(\mathcal A(\omega,u))\big),\\
 &\nabla_u\mathcal G(\mathcal A(\omega,u))=\sum_j \mathcal S^T_j\Big( \omega^*\circ\mathcal F^*\big(\nabla_{z_j} \mathcal G(\mathcal A(\omega,u))\big)\Big),
 \end{split}
 \]
 and $\nabla_z \mathcal G(z)$ is shown in \eqref{eqGrad-P}.

\section{Proof of Lemma \ref{lem1}}\label{apdx1-2}
We need the following estimate:
\begin{equation}\label{eqConsine}
\|v\|^2-\|w\|^2=\|v-w\|^2+2\Re(\langle w, v-w  \rangle)~\forall v, w\in \mathbb C^m.
\end{equation}
Denoting
{
$\mathcal Q_1(\omega,u,z):=\mathcal Q(\omega,u,z)+\mathbb I_{\mathscr X_1}(\omega)+\tfrac{\alpha_1}{2\beta}\|\omega-\omega_0\|_{M_1}^2$ and $\mathcal Q_2(\omega,u,z):=\mathcal Q(\omega,u,z)+\mathbb I_{\mathscr X_2}(u)+\tfrac{\alpha_2}{2\beta}\|u-u_0\|^2_{M_2}$} with $\mathcal Q(\omega,u,z):=\tfrac12\|\mathcal A(\omega,u)-z\|^2,$ and two positive definite matrices $M_1$ and $M_2$,
a basic estimate can be given below:

Letting $\omega^\star:=\arg\min_{\omega}\mathcal Q_1(\omega,u,z)$ and $u^\star:=\arg\min_{u}\mathcal Q_2(\omega,u,z)$,
\begin{align}
&\mathcal Q(\omega_0,u,z)-\mathcal Q(\omega^\star,u,z){\geq} \tfrac{1}{2}\|\mathcal A(\omega_0-\omega^\star, u)\|^2{+\tfrac{\alpha_1}{\beta}\|\omega^\star-\omega_0\|^2_{M_1},}\label{eq13-1}\\
&\mathcal Q(\omega,u_0,z)-\mathcal Q(\omega,u^\star,z){\geq} \tfrac{1}{2}\|\mathcal A(\omega, u_0-u^\star)\|^2{+\tfrac{\alpha_2}{\beta}\|u^\star-u_0\|^2_{M_2},}\label{eq13-2}
\end{align}
{for all $\omega_0\in \mathscr X_1$ and $u_0\in\mathscr X_2,$}
which can be easily derived by calculating the G$\mathrm{\hat{a}}$teaux derivative, \eqref{eqConsine},  $\mathcal A(\omega,u)$ being bilinear {and two constraint sets $\mathscr X_1, \mathscr X_2$ being convex}.

\begin{proof}
First, we consider the subproblem in Step 1 of \eqref{eqADMM} as
\begin{equation}\label{eqED1}
 \begin{split}
 &\Upsilon_\beta(X^k)-  \Upsilon_\beta(\omega^{k+1},u^{k},z^{k},\Lambda^{k})=\stackrel{\eqref{eq13-1}}{\geq}\tfrac{\beta}{2}\|\mathcal A(E_\omega^{k+1}, u^k)\|^2{+{\alpha_1}\|E^{k+1}_{\omega}\|^2_{M^k_1}}\\
&= \tfrac{\beta}{2}\sum\nolimits_j \|E_\omega^{k+1}\circ \mathcal S_j u^k\|^2{+{\alpha_1}\|E^{k+1}_{\omega}\|^2_{M^k_1}}\\
&= \tfrac{\beta}{2} \big\langle |E_\omega^{k+1}|^2, \sum\nolimits_j|\mathcal S_j u^k|^2\big\rangle{+{\alpha_1}\|E^{k+1}_{\omega}\|^2_{M^k_1}}\geq  \tfrac{\beta}{2} I_u^k\|E_\omega^{k+1}\|^2.
 \end{split}
\end{equation}
For the subproblem in Step 2 of \eqref{eqADMM}, similarly,
we have
\begin{equation}\label{eqED2}
\Upsilon_\beta(\omega^{k+1},u^k,z^k,\Lambda^k)\!-\!\Upsilon_\beta(\omega^{k+1},u^{k+1},z^{k},\Lambda^{k})\geq\tfrac{\beta}{2}I_\omega^k \|E_u^{k+1}\|^2.
\end{equation}
%
For the subproblem in Step 3 of \eqref{eqADMM}, by \eqref{eqConsine} and \eqref{eqOptC-3}, we have
\[
\begin{split}
&\Upsilon_\beta(\omega^{k+1},u^{k+1},z^k,\Lambda^k)-  \Upsilon_\beta(\omega^{k+1},u^{k+1},z^{k+1},\Lambda^{k})
\\
&=\mathcal G(z^{k})-\mathcal G(z^{k+1})+\tfrac{\beta}{2}\|E_z^{k+1}\|^2+\Re(\langle\nabla \mathcal G(z^{k+1}),  E_z^{k+1} \rangle).\\
\end{split}
\]
Then, using the Lemma \ref{assump1}, and Cauchy's inequality as
$
\Re(\langle z_1,z_2\rangle)\geq - \tfrac{L}{2} \|z_1\|^2-\tfrac{1}{2L}\|z_2\|^2$~$\forall$ $z_1,$ $z_2$ $\in\mathbb C^m,
$
one readily has
\begin{equation}\label{eqED3}
\begin{split}
&\Upsilon_\beta(\omega^{k+1},u^{k+1},z^k,\Lambda^k)-  \Upsilon_\beta(\omega^{k+1},u^{k+1},z^{k+1},\Lambda^{k})\\
\geq &\tfrac{\beta-2L}{2}\|E_z^{k+1}\|^2-\tfrac{1}{2L}\|\nabla \mathcal G(z^{k+1})-\nabla \mathcal G(z^{k})\|^2
\geq 
\tfrac{\beta-3L}{2}\|E_z^{k+1}\|^2.
\end{split}
 \end{equation}
Then we need to estimate the following relation as
$
\Upsilon_\beta(\omega^{k+1},u^{k+1},z^{k+1},\Lambda^k)\!- \! \Upsilon_\beta(X^{k+1})\!\!$
$=-\tfrac{1}{\beta}\|E_\Lambda^{k+1}\|^2,
$
that yields the following inequality by \eqref{eqOptC-3-2} and Lemma \ref{assump1}
\begin{equation}\label{eqED4}
\begin{split}
&\!\Upsilon_\beta(\omega^{k+1},u^{k+1},z^{k+1},\Lambda^k)\!-\!  \Upsilon_\beta(\omega^{k+1},u^{k+1},z^{k+1},\Lambda^{k+1})\!\geq\!
\!-\tfrac{L^2}{\beta}\|E_z^{k+1}\|^2.
\end{split}
\end{equation}
Finally, summing up \eqref{eqED1} and \eqref{eqED4} gives the desired lemma.
\end{proof}

\section{Proof of Lemma \ref{lem2}}\label{apdx-2}

\begin{proof}
One can readily prove the nonincrease
 of the augmented Lagrangian with sufficiently large $\beta>\tfrac{3+\sqrt{17}}{2}L$ by  Lemma \ref{lem1}. 
 On the other hand,
\[
\begin{split}
&\quad\Upsilon(X^{k+1})\\
&\stackrel{\eqref{eqOptC-3-2}}{=}\mathcal G(z^{k+1})-\Re(\langle z^{k+1}-\mathcal A(\omega^{k+1}, u^{k+1}), \nabla \mathcal G(z^{k+1}) \rangle)
+\tfrac{\beta}{2}\|z^{k+1}-\mathcal A(\omega^{k+1}, u^{k+1})\|^2\\
&\stackrel{\eqref{eqLipDescent}}{\geq} \tfrac{\beta-L}{2}\|z^{k+1}-\mathcal A(\omega^{k+1}, u^{k+1})\|^2+\mathcal G(\mathcal A(\omega^{k+1},u^{k+1})).
\end{split}
\]
Therefore, due to the boundedness of $\omega^k$ and $u^k$, the sequence  $\{z^k\}$ is  also bounded.

By \eqref{eqOptC-3-2}, the boundedness of $\{z^k\}$, and $C^\infty$ smooth of $\mathcal G$, $\{\Lambda^k\}$ is also bounded. 
%
Therefore $\{X^k\}$ is bounded. As a result,  the augmented Lagrangian is bounded as well,
which completes this lemma.
\end{proof}

\section{Proof of Lemma \ref{lem3}}\label{apdx1-4}

\begin{proof}
We first estimate the upper bound of the partial  derivative \emph{w.r.t.} to $\omega$. {By \eqref{eqOptC-1}, there exists a variable $\bar \omega^{k+1}$ defined as
\begin{equation}\label{eqOptC-1-1}
\bar \omega^{k+1}:=-{\omega}^{k+1}\circ{\sum\nolimits_j\left|\mathcal S_j u^k\right|^2}+{\sum\nolimits_j(\mathcal S_j u^k)^*\circ\mathcal F^{-1}(z_j^k+\tfrac{1}{\beta}\Lambda_j^k)}-\tfrac{\alpha_1}{\beta}\mathrm{diag}(M_1^k) \circ E_\omega^{k+1},
\end{equation}
such that
$\bar \omega^{k+1}\in\tfrac{1}{\beta}\partial \mathbb I_{\mathscr X_1}(\omega^{k+1}).$
Immediately,
\[
\bar \omega^{k+1}+ {\omega}^{k+1}\circ {\sum\nolimits_j\left|\mathcal S_j u^{k+1}\right|^2}\!-\!{\sum\nolimits_j(\mathcal S_j u^{k+1})^*\circ\mathcal F^{-1}\hat z_j^{k+1}}\in \tfrac{1}{\beta}\partial_\omega \Upsilon_\beta (X^{k+1}).
\]
Hence we need to estimate the bound of the left hand side term of the above equation.} Readily we have
\begin{equation}\label{eq1}
\begin{split}
&{\mathrm{dist}(0,\partial_\omega \Upsilon_\beta (X^{k+1}))\!\leq\!\beta\Big\|\bar \omega^{k+1}+{\omega}^{k+1}\circ {\sum\nolimits_j\left|\mathcal S_j u^{k+1}\right|^2}\!-\!{\sum\nolimits_j(\mathcal S_j u^{k+1})^*\circ\mathcal F^{-1}\hat z_j^{k+1}}\Big\|}\\
&\stackrel{\eqref{eqOptC-1-1}}{\leq}{{\alpha_1}\|\mathrm{diag}(M_1^k) \circ E_\omega^{k+1}\|}+\beta\Big\|{\omega}^{k+1}\circ {\sum\nolimits_j\left|\mathcal S_j u^{k+1}\right|^2}\!-\!{\omega}^{k+1}\circ {\sum\nolimits_j\left|\mathcal S_j u^k\right|^2}\Big\|\!\\
&\qquad~~~+\beta\Big\|{\sum\nolimits_j(\mathcal S_j u^{k+1})^*\circ\mathcal F^{-1}\hat z_j^{k+1}}-{\sum\nolimits_j(\mathcal S_j u^k)^*\circ\mathcal F^{-1}\hat z_j^{k}}\Big\|\\
&\!\leq\!{{\alpha_1}\|\mathrm{diag}(M_1^k)\|_\infty \| E_\omega^{k+1}\|}+\beta\|{\omega}^{k+1}\|_\infty\big\|\sum\nolimits_j\mathcal S_j (|u^{k+1}|^2\!-\! |u^k|^2)\big\|\!\\
\!&\!+\!\beta\Big\|{\sum\nolimits_j(\mathcal S_j E_u^{k+1})^*\circ\mathcal F^{-1}\hat z_j^{k}}\Big\|\!+\beta\Big\|{\sum\nolimits_j(\mathcal S_j u^{k+1})^*\circ\mathcal F^{-1}(z_j^{k+1}-z_j^k+\tfrac{1}{\beta}\Lambda_j^{k+1}-\tfrac{1}{\beta}\Lambda_j^k)}\Big\|,\!\!
\end{split}
\end{equation}
where the first two terms of last inequality are derived by \eqref{eq3}. 
Then we will estimate  three terms in the last inequality of  \eqref{eq1} one by one.
For the first term of the last inequality for \eqref{eq1}, we have
\begin{equation}
\label{eq5}
\begin{split}
&\!\!\big\|\sum\nolimits_j\mathcal S_j (|u^{k+1}|^2- |u^k|^2)\big\|\leq  \sqrt{\rho(\mathcal S)}\big\||u^{k+1}|^2- |u^k|^2 \big\| \\
&\!\!\!
\!\stackrel{\eqref{eq3}}{\leq}\! \sqrt{\rho(\mathcal S)}\big(\|u^{k+1}\|_\infty\!+\!\|u^k\|_{\infty}\big) \|E_u^{k+1}\|,\!
\end{split}
\end{equation}
where the first inequality is derived by
$
\big\|\sum\nolimits_j \mathcal S_j u\big\|^2=\sum_{0\leq j_1,j_2\leq J-1}\big\langle  \mathcal S_{j_1} u, \mathcal S_{j_2} u\big\rangle
=\big\langle \mathcal S u,  u\big\rangle\leq \rho(S)\|u\|^2,
$
with a positive semi-positive matrix $S:=\sum_{0\leq j_1,j_2\leq J-1} \mathcal S_{j_2}^T\mathcal S_{j_1}\in \mathbb R^{n\times n}$ and its spectral radius $\rho(S).$
For the second term, we have
\begin{equation}\label{eq6}
\begin{split}
&\big\|{\sum\nolimits_j(\mathcal S_j E_u^{k+1})^*\circ\mathcal F^{-1}\hat z_j^{k}}\big\|\leq\sum\nolimits_j\big\|{(\mathcal S_j E_u^{k+1})^*\circ\mathcal F^{-1}\hat z_j^{k}}\big\|\\
&\stackrel{\eqref{eq3}}{\leq}\sum\nolimits_j \|\mathcal F^{-1}\hat z_j^{k}\|_\infty\big\|\mathcal S_j E_u^{k+1}\big\|\leq
\max_j \|\mathcal F^{-1}\hat z_j^{k}\|_\infty \sqrt{J\rho(\tilde {\mathcal S})  }\|E_u^{k+1}\|,
\end{split}
\end{equation}
since
$\sum\nolimits_j \big\|\mathcal S_j u\big\|\leq \sqrt{J} \sqrt{\sum_j \|\mathcal S_j u\|^2} \leq \sqrt{J\rho(\tilde {\mathcal S})  }\|u\|,
$
with  $\tilde {\mathcal S}:=\sum_j \mathcal S_j^T \mathcal S_j\in \mathbb R^{n\times n}.$
For the third term, we have
\begin{equation}\label{eq2}
\begin{split}
&\big\|{\sum\nolimits_j(\mathcal S_j u^{k+1})^*\circ\mathcal F^{-1}(z_j^{k+1}-z_j^k+\tfrac{1}{\beta}\Lambda_j^{k+1}-\tfrac{1}{\beta}\Lambda_j^k)}\big\|
\\
&\!\leq \!\|u^{k+1}\|_{\infty} \sum\nolimits_j\big(\|z_j^{k+1}\!-\!z^k_j\|+\tfrac{1}{\beta}\|\Lambda_j^{k+1}\!-\!\Lambda_j^k\|\big)\leq \sqrt{J} \|u^{k+1}\|_{\infty}  \big(\|E_z^{k+1}\|\!+\!\tfrac{1}{\beta}\|E_\Lambda^{k+1}\|\big),\!\!\!\!
\end{split}
\end{equation}
where the last inequality is derived by
${\sum_j x_j}\leq \sqrt{J\sum_j x_j^2}~\forall x_j\in \mathbb R$.
By inserting \eqref{eq5} and \eqref{eq2} into \eqref{eq1}, we have:
\begin{equation}\label{eq9}
\begin{split}
&\!\!\!\!\|\nabla_{\omega} \Upsilon_\beta(X^{k+1})\|
\!\leq\! {{\alpha_1}\|\mathrm{diag}(M_1^k)\|_\infty \| E_\omega^{k+1}\|}+\beta \Big(\|\omega^{k+1}\|_\infty\sqrt{\rho(\mathcal S)} (\|u^{k+1}\|_\infty\!+\!\|u^k\|_{\infty})\!\\
&\qquad\qquad+ \max_j \|\mathcal F^{-1}\hat z_j^{k}\|_\infty \sqrt{J\rho(\tilde {\mathcal S})  }\Big)\! \|E_u^{k+1}\|
+\beta \sqrt{J} \|u^{k+1}\|_{\infty}  \big(\|E_z^{k+1}\|+\tfrac{1}{\beta}\|E_\Lambda^{k+1}\|\big).
\end{split}
\end{equation}

Then consider the derivative \emph{w.r.t.} $u$.
  {By \eqref{eqOptC-2}, there exists a variable $\bar u^{k+1}$ defined as
  \begin{equation}\label{eqOptC-2-1}
  \!\bar u^{k+1}\!:=-u^{k+1}\circ\!\sum\nolimits_j\mathcal S_j^T|\omega^{k+1}|^2\!+\sum\nolimits_j\mathcal S_j^T((\omega^{k+1})^*\circ\mathcal F^{-1}( z_j^k\!+\tfrac{1}{\beta}\Lambda_j^k))-\tfrac{\alpha_2}{\beta}\mathrm{diag}(M_2^k)\circ E_u^{k+1},\!\!
  \end{equation}
such that
$\bar u^{k+1}\in\partial \mathbb I_{\mathscr X_2}(u^{k+1}).$
Immediately,
\[
\bar u^{k+1}+ u^{k+1}\circ\sum\nolimits_j\mathcal S_j^T|\omega^{k+1}|^2\!-\!\sum\nolimits_j\mathcal S_j^T((\omega^{k+1})^*\circ\mathcal F^{-1}\hat z_j^{k+1})\in \tfrac{1}{\beta}\partial_u \Upsilon_\beta (X^{k+1}).
\]
}
Similarly to \eqref{eq1}, we have
\begin{equation}\label{eq10}
\begin{split}
&{\mathrm{dist}(0,\partial_u \Upsilon_\beta (X^{k+1}))}\leq\!\beta\Big\| {\bar u^{k+1}}+u^{k+1}\circ\sum\nolimits_j\mathcal S_j^T|\omega^{k+1}|^2\!-\!\sum\nolimits_j\mathcal S_j^T((\omega^{k+1})^*\circ\mathcal F^{-1}\hat z_j^{k+1})\Big\|\\
\!&\!\stackrel{\eqref{eqOptC-2-1}}{=}\!\!\beta
\Big\|\!{-\tfrac{\alpha_2}{\beta}\mathrm{diag}(M_2^k)\circ E_u^{k+1}}\!+\!\sum\nolimits_j\mathcal S_j^T((\omega^{k+1})^*\circ\mathcal F^{-1}\hat z_j^{k+1})\!-\!\sum\nolimits_j\mathcal S_j^T((\omega^{k+1})^*\circ\mathcal F^{-1}\hat z_j^{k})\Big\|\!\!\!\!\\
&\leq{{\alpha_2}\|\mathrm{diag}(M_2^k)\|_\infty\| E_u^{k+1}\|}+\beta
\sqrt{J}\|\omega^{k+1}\|_\infty\big(\| E_z^{k+1}\|+\tfrac{1}{\beta}\|E_\Lambda^{k+1}\|\big).
\end{split}
\end{equation}

For the derivatives \emph{w.r.t.} $z$ and $\Lambda$, we have
\begin{equation}\label{eq7}
\begin{split}
&\|\nabla_{z} \Upsilon_\beta(X^{k+1})\|\!=\!\|\nabla \mathcal G(z^{k+1})\!+\!\Lambda^{k+1}\!+\!\beta(z^{k+1}\!-\!\mathcal A(\omega^{k+1},u^{k+1}))\|\!\stackrel{\eqref{eqOptC-3}}{=}\!\|E_\Lambda^{k+1}\|;\!
\end{split}
\end{equation}
\begin{equation}\label{eq8}
\begin{split}
&\|\nabla_{\Lambda} \Upsilon_\beta(X^{k+1})\|
{=}\|z^{k+1}-\mathcal A(\omega^{k+1},u^{k+1})\|=\tfrac{1}{\beta}\|E_\Lambda^{k+1}\|.
\end{split}
\end{equation}

Set $C_3$ as
$C_3={2}\sup_{k}\max\Big\{{{\alpha_2}\|\mathrm{diag}(M_2^k)\|_\infty}+  \beta\Big(\|\omega^{k+1}\|_\infty\sqrt{\rho(\mathcal S)}(\|u^{k+1}\|_\infty+\|u^k\|_{\infty})$
$+ \max_j \|\mathcal F^{-1}\hat z_j^{k}\|_\infty \sqrt{J\rho(\tilde {\mathcal S})  }\Big), {{\alpha_1}\|\mathrm{diag}(M_1^k)\|_\infty},
\beta\sqrt{J}(\|u^{k+1}\|_\infty+\|\omega^{k+1}\|_\infty),~~$
$
\sqrt{J}(\|u^{k+1}\|_\infty+\|\omega^{k+1}\|_\infty)+1+\tfrac{1}{\beta}
  \Big\}$,
which is bounded by Lemma \ref{lem2}.
Since
\[
\begin{split}
&\mathrm{dist}(0,\partial \Upsilon_\beta (X^{k+1}))\leq \mathrm{dist}(0,\partial_\omega \Upsilon_\beta (X^{k+1}))\!+\mathrm{dist}(0,\partial_u \Upsilon_\beta (X^{k+1}))\!\\
&\hskip 3.5cm
+\|\nabla_z \Upsilon_\beta (X^{k+1})\|\!+\|\nabla_\Lambda \Upsilon_\beta (X^{k+1})\|,
\end{split}
\]
by summing up  \eqref{eq9}-\eqref{eq8},
we finally conclude this lemma with the given $C_3$ .
\end{proof}

\section{Proof of Theorem  \ref{thm2}}\label{apdx1-5}

\begin{proof}
By the Lemma \ref{lem1} and Assumption \ref{assump2}, we have
\begin{equation}\label{eq15}
 \begin{split}
 &\Upsilon_\beta(X^k)\!-\! \Upsilon_\beta(X^{k+1})\geq  \!\tfrac{\beta}{2} \min\{C_1,C_2\}(\|E_\omega^{k+1}\|^2
\!+\!\|E_u^{k+1}\|^2)\!+\!(\tfrac{\beta\!-\!3L}{2}\!-\!\tfrac{L^2}{\beta})\|E_z^{k+1}\|^2,\\
& \geq  \tfrac{\beta}{2} \min\{C_1,C_2\}(\|E_\omega^{k+1}\|^2
+\|E_u^{k+1}\|^2)+\tfrac{1}{\beta}\|E_\Lambda^{k+1}\|^2
+(\tfrac{\beta-3L}{2}-\tfrac{2L^2}{\beta})\|E_z^{k+1}\|^2,\\
 \end{split}
\end{equation}
where the last inequality is derived by
$
\|E_\Lambda^{k+1}\|\stackrel{\eqref{eqOptC-3-2}}{=}\|\nabla \mathcal G(z^{k+1})-\nabla \mathcal G(z^k)\|\leq L\|E_z^{k+1}\|.\!\!
$
Therefore,
with $\beta>4L,$
we have
\begin{equation}\label{eq11}
\Upsilon_\beta(X^k)-  \Upsilon_\beta(X^{k+1})\geq C_4\|X^{k+1}-X^k\|^2,
\end{equation}
with  a positive constant defined as
$
C_4:=\min\Big\{\tfrac{\beta}{2} C_1, \tfrac{\beta}{2} C_2, \tfrac{1}{\beta}, \tfrac{\beta-3L}{2}-\tfrac{2L^2}{\beta} \Big\}>0.
$

According to Proposition \ref{thm0}, there exists at least a stationary point for the proposed Model I in \eqref{algADMM} and the corresponding augmented Lagrangian.
{Readily  we have   \begin{equation}
\begin{split}
\Upsilon_\beta&(\omega,u,z,\Lambda):=\mathcal G(z)
+\tfrac{\beta}{2}\|z-\mathcal A(\omega, u)+\tfrac{\Lambda}{\beta}\|^2-\tfrac{1}{2\beta}\|\Lambda\|^2+\mathbb I_{\mathscr X_1}(\omega)+\mathbb I_{\mathscr X_2}(u).
\end{split}
\end{equation}
By separating the real and imaginary parts of the variables $\omega, u, z, \Lambda$ and the operator $\mathcal A$ as  \cite{chang2016Total}, one readily knows that $\mathcal G(z)
+\tfrac{\beta}{2}\|z-\mathcal A(\omega, u)+\tfrac{\Lambda}{\beta}\|^2-\tfrac{1}{2\beta}\|\Lambda\|^2$ is the semi-algebraic  function as well as the indictor functions $\mathbb I_{\mathscr X_1}(\omega)+\mathbb I_{\mathscr X_2}(u)$. {Therefore, $\Upsilon_\beta$ is semi-algebraic (Example 2 in \cite{attouch2010proximal}) such that it satisfies the KL property \cite{attouch2010proximal}.}} {For the case of pIPM, the summation of the real-analytic function $\mathcal G(z)$ and the semi-algebraic function $\tfrac{\beta}{2}\|z-\mathcal A(\omega, u)+\tfrac{\Lambda}{\beta}\|^2-\tfrac{1}{2\beta}\|\Lambda\|^2+\mathbb I_{\mathscr X_1}(\omega)+\mathbb I_{\mathscr X_2}(u)$ is  sub-analytic  \cite{xu2013block}, so the objective functional satisfies the KL
property.}
Furthermore,  by combining it with  Lemmas \ref{lem2} and \ref{lem3} and \eqref{eq11}, {we can  readily conclude this theorem following  \cite{attouch2010proximal,hong2016convergence,wang2015global,lou2016fast,mei2016cauchy,yang2017alter}}.
The reminder of the proof  is standard, and therefore we omit the details.
\end{proof}

\end{document}